\pgfplotsset{compat=newest} 
\pgfplotsset{plot coordinates/math parser=false}
\newlength\fwidth
\definecolor{myBlue}{rgb}{0.0,0.0,0.55}
  \newcounter{mnote}
  \let\oldmarginpar\marginpar
    \renewcommand\marginpar[1]{\-\oldmarginpar[\raggedleft\footnotesize #1]%
    {\raggedright\footnotesize #1}}
\newtheorem{theorem}{Theorem}[section]
\newtheorem{lemma}[theorem]{Lemma}
\newtheorem{corollary}[theorem]{Corollary}
\newtheorem{proposition}[theorem]{Proposition}
\newtheorem{example}[theorem]{Example}
\newtheorem{remark}[theorem]{Remark}
\newcommand{\dx}{\,{\rm d}x}
\newcommand{\dd}{\,{\rm d}}
\newcommand{\bs}{\boldsymbol}
\renewcommand{\div}{\operatorname{div}}
\newcommand{\grad}{{\rm grad\,}}
\newcommand{\tr}{\operatorname{tr}}
\newcommand{\dev}{\operatorname{dev}}
\newcommand{\sym}{\operatorname{sym}}
\newcommand{\alt}{\operatorname{Alt}}
\newcommand{\step}[1]{\noindent\raisebox{1.5pt}[10pt][0pt]{\tiny\framebox{$#1$}}\xspace}
\newcommand{\vertiii}[1]{{\left\vert\kern-0.25ex\left\vert\kern-0.25ex\left\vert #1 
    \right\vert\kern-0.25ex\right\vert\kern-0.25ex\right\vert}}
\newcommand{\Oplus}{\ensuremath{\vcenter{\hbox{\scalebox{1.5}{$\oplus$}}}}}
\newcommand{\prox}{\operatorname{Prox}}
\begin{document}
\title{$H(\div)$-conforming Finite Element Tensors with Constraints}
\author{Long Chen}%
 \address{Department of Mathematics, University of California at Irvine, Irvine, CA 92697, USA}%
 \email{chenlong@math.uci.edu}%
 \author{Xuehai Huang}%
 \address{Corresponding author. School of Mathematics, Shanghai University of Finance and Economics, Shanghai 200433, China}%
 \email{huang.xuehai@sufe.edu.cn}%

\thanks{The first author was supported by DMS-2012465 and DMS-2309785.}
\thanks{The second author was supported by the National Natural Science Foundation of China Project 12171300, and the Natural Science Foundation of Shanghai 21ZR1480500.}

\makeatletter
\@namedef{subjclassname@2020}{\textup{2020} Mathematics Subject Classification}
\makeatother
\subjclass[2020]{
65N30;   
65N12;   
58J10;   
15A69;   
}

\begin{abstract}
A unified construction of $H(\div)$-conforming finite element tensors, including vector  element, symmetric  matrix element, traceless  matrix element, and, in general, tensors with linear constraints, is developed in this work. It is based on the geometric decomposition of Lagrange elements into bubble functions on each sub-simplex. Each tensor at a sub-simplex is decomposed into tangential and normal components. The tangential component forms the bubble function space, while the normal component characterizes the trace. Some degrees of freedom can be redistributed to $(n-1)$-dimensional faces. The developed finite element spaces are $H(\div)$-conforming and satisfy the discrete inf-sup condition. Intrinsic bases of the constraint tensor space are also established. 
\end{abstract}
\maketitle


\section{Introduction}
Hilbert complexes play a fundamental role in the theoretical analysis and the design of stable numerical methods for partial differential equations~\cite{Arnold2006,Arnold2010,arnoldFiniteElementExterior2018,ChenHuang2018}. Recently, in~\cite{arnoldComplexesComplexes2021}, Arnold and Hu have developed a systematic approach to derive new Hilbert complexes from well-understood differential complexes, such as the de Rham complex. In space $\mathbb{R}^n$, for $0 \leq k \leq n$, let $\alt^{k,n-1}:=\alt^{k}\otimes\alt^{n-1}$ be the tensor product of alternating multilinear functional spaces, $H^s$ be the standard Sobolev space with real index $s$, and $\kappa_k$ be the Koszul operator for the de Rham complex. Below, we rotate the right end column of the Bernstein-Gelfand-Gelfand (BGG) diagram in~\cite{arnoldComplexesComplexes2021} and switch the ordering of the index in~\cite{arnoldComplexesComplexes2021} to match the row action of the operator $\div$.
\begin{figure}[htbp]
\begin{center}
\includegraphics[width=5.6in]{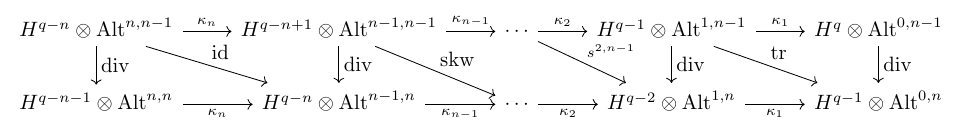}
\label{fig:divXdiagram}
\end{center}
\end{figure}

The algebraic operator  \( s^{k, n-1}: \alt^{k,n-1} \rightarrow \alt^{k-1, n} \), for \( 1 \leq k \leq n \) along the $\searrow$ direction, is defined as
\begin{align*}
s^{k, n-1} \omega \left(w_{1}, \cdots, w_{k-1}\right) \left(v_{1}, \cdots, v_{n}\right) 
& := \sum_{i=1}^{n} (-1)^{i-1} \omega \left(v_{i}, w_{1}, \cdots, w_{k-1}\right) \left(v_{1}, \cdots, \widehat{v}_{i}, \cdots, v_{n}\right) \\
& \forall~v_{1}, \cdots, v_{n}, w_{1}, \cdots, w_{k-1} \in \mathbb{R}^{n}.
\end{align*}
The tensor space \( \mathbb{X} \) is defined as
$$
\mathbb{X} := \ker(s^{k, n-1}) \cap \alt^{k,n-1}, \quad 1 \leq k \leq n-1.
$$
A tensor \( \omega \in \alt^{k,n-1} \) can be represented by a matrix \( \bs{A} = (a_{\sigma, i}) \in \mathbb{R}^{{n \choose k} \times n} \), which is called a proxy matrix, and then \( \mathbb{X} \) is a subset of matrices satisfying certain linear constraints. For simplicity, we will use matrix \( \bs{A} \) to represent an element in \( \mathbb{X} \). Given a domain \( \Omega \subset \mathbb{R}^n \), define \( H(\div, \Omega; \mathbb{X}) \) space as
\begin{equation*}
H(\div, \Omega; \mathbb{X}) = \big\{ \bs{A} \in L^2(\Omega; \mathbb{X}): \div \bs{A} \in L^2(\Omega; \mathbb{R}^{{n \choose k}}) \big\},
\end{equation*}
where the divergence operator is applied to each row of \( \bs{A} \).

Notable examples are \( H(\div; \mathbb{S}) \) with the symmetric matrix \( \mathbb{S} \), which plays an important role in the discretization of the elasticity equation in the mixed form~\cite{JohnsonMercier1978,ArnoldDouglasGupta1984,Arnold.D;Winther.R2002}, and \( H(\div; \mathbb{T}) \) with the traceless matrix \( \mathbb{T} \), which is used in the mixed form of the linearized Einstein-Bianchi system \cite{QuennevilleBelair2015,Chen;Huang:2020Discrete,HuLiang2021}.

Given a simplicial mesh \( \mathcal{T}_h \) of \( \Omega \), we shall develop a systematic construction of \( H(\div) \)-conforming finite element subspaces of \( H(\div, \Omega; \mathbb{X}) \) space. Take a proxy matrix \( \bs{A} \) for an elementwise smooth tensor \( \omega \in L^2(\Omega; \mathbb{X}) \). \( H(\div) \)-conformity means \( \bs{A} \boldsymbol{n}_F \) is continuous across each \( (n-1) \)-dimensional face \( F \) of \( \mathcal{T}_h \). Namely, each row of \( \bs{A} \) is an \( H(\div) \)-conforming vector function.

It is the constraint \( s^{k, n-1}(\bs{A}) = 0 \) that makes the finite element construction difficult, as the constraint and the normal continuity should be satisfied simultaneously. For example, the symmetry of the tensors makes it a challenging problem to construct \( H(\div; \mathbb{S}) \)-conforming finite elements. Arnold and Winther~\cite{Arnold.D;Winther.R2002} constructed such an element in two dimensions, and later it was extended to higher dimensions in~\cite{HuZhang2016,ArnoldAwanouWinther2008,AdamsCockburn2005}. Hu and Zhang~\cite{HuZhang2015,Hu2015} constructed \( H(\div; \mathbb{S}) \)-conforming finite elements with full polynomial spaces with matched order of approximation.

The approach we shall use is the geometric decomposition, which leads to explicit bases for finite elements. The geometric decomposition is an important tool for finite element analysis. For example, it is used in~\cite{Falk2014} to construct a local and bounded co-chain projection to the discrete de Rham complexes. The finite element system in~\cite{ChristiansenMunthe-KaasOwren2011} also originates from the geometric decomposition. The geometric decomposition of standard finite element de Rham complexes is well-studied in~\cite{ArnoldFalkWinther2009,Arnold2006,GopalakrishnanGarciaCastilloDemkowicz2005}, and in~\cite{christiansenNodalFiniteElement2018} for nodal finite element de Rham complexes. Recently geometric decomposition has been extended to smooth finite elements and smooth finite element de Rham and Stokes complexes~\cite{ChenHuang2021Cmgeodecomp,Chen;Huang:2022FEMcomplex3D}.

We shall integrate the geometric decomposition of the Lagrange element with tangential-normal ($t$-$n$) decompositions on subsimplices to produce geometric decompositions of \( H(\div) \)-conforming finite element vectors and tensors, exhibiting normal continuity across all $(n-1)$-dimensional faces. In a similar way, hierarchical geometric decomposition of \( H(\div) \)-conforming finite elements in two and three dimensions is discussed in~\cite{DeSiqueiraDevlooGomes2010,DeSiqueiraDevlooGomes2013,CastroDevlooFariasGomesEtAl2016}. A geometric decomposition of \( H(\div) \)-conforming finite element vectors with a different tangential-normal decomposition in three dimensions is also shown in~\cite{christiansenNodalFiniteElement2018}. While these prior studies offer similar insights, our methodology introduces a novel level of generality. A significant aspect of our contribution is the expansion of geometric decomposition techniques to effectively manage tensors subjected to specific constraints.

To satisfy the constraint while still keeping normal continuity, the crucial step is to get a \( t \)-\( n \) decomposition of \( \mathbb{X} \) with respect to the second component in \( \alt^{k, n-1} \), i.e.,
\begin{equation*}
\mathbb{X} =  \mathscr{T}^f(\mathbb{X}) \oplus \mathscr{N}^f(\mathbb{X}),
\end{equation*}
where \( \mathscr{T}^f(\mathbb{X}) = \left( \alt^k \otimes \mathscr{T}^f \right) \cap \ker (s^{k,n-1}) \) and \( \mathscr{N}^f(\mathbb{X}) = \pi_{\mathbb{X}} (\alt^k \otimes \mathscr{N}^f) \) with an oblique (non-orthogonal) projection operator \( \pi_{\mathbb{X}}: \alt^k \otimes \mathscr{N}^f \to \mathbb{X} \) so that one constraint is used only once either in \( \mathscr{T}^f(\mathbb{X}) \) or \( \mathscr{N}^f(\mathbb{X}) \).

This induces the geometric decomposition
\begin{equation}\label{intro:PrXdec}
\mathbb{P}_r(T; \mathbb{X}) = \Oplus_{\ell = 0}^n \Oplus_{f \in \Delta_{\ell}(T)} b_f \mathbb{P}_{r-(\ell+1)}(f) \otimes \left[ \mathscr{T}^f(\mathbb{X}) \oplus \mathscr{N}^f(\mathbb{X}) \right].
\end{equation}
The tangential component will contribute to the polynomial bubble space 
$$ \mathbb{B}_r(\div,T; \mathbb{X}) := \Oplus_{\ell = 1}^n \Oplus_{f \in \Delta_{\ell}(T)} \left[ b_f \mathbb{P}_{r-(\ell+1)}(f) \otimes \mathscr{T}^f(\mathbb{X}) \right], $$ 
and the normal component \( b_f \mathbb{P}_{r-(\ell+1)}(f) \otimes \mathscr{N}^f(\mathbb{X}) \) to the trace.

As a direct result of decomposition~\eqref{intro:PrXdec}, the following DoFs
\begin{subequations}\label{intro:XDof}
\begin{align}
\label{intro:bdXDof0}
\omega(\texttt{v}_i), 
& \quad \quad~i = 0,\ldots, n, \omega\in \mathbb X,\\
\label{intro:bdXDof1}
(\omega, \eta)_f, 
& \quad \quad~\eta \in \mathbb{P}_{r-(\ell +1)}(f) \otimes \mathscr{N}^f(\mathbb{X}), \quad f \in \Delta_{\ell}(\mathcal{T}_h), \quad \ell = 1,\ldots, n-1, \\
\label{intro:bubbleXDof} 
(\omega, \eta)_T, 
& \quad \quad~\eta \in \mathbb{B}_r(\div,T; \mathbb{X}), \quad T \in \mathcal{T}_h,
\end{align}
\end{subequations}
will determine a space \( V_h \subset H(\div, \Omega; \mathbb{X}) \). Here we single out the vertex DoFs to emphasize the finite element function is continuous on vertices.

Discrete inf-sup condition will be established with requirement $r\geq n+1$ and with modification of DoFs for $r\geq k+1$ for $1\leq k\leq n-2$. Variants can be constructed by further tuning DoFs~\eqref{intro:XDof}, which will recover the existing $H(\div;\mathbb S)$ elements~\cite{Chen;Huang:2021divFinite,Hu2015,HuZhang2015} and $H(\div;\mathbb T)$ elements~\cite{Chen;Huang:2020Discrete,HuLiang2021}. 

The geometric decomposition approach in this paper is not easy to extend to the case $\mathbb X=\mathbb S\cap\mathbb T$, which requires special care and super-smoothness of DoFs; see the recent work~\cite{HuLinShi2023}.

The rest of this paper is organized as follows. Section~\ref{sec:pre} covers foundational concepts, including simplex, barycentric coordinates, Bernstein polynomials, and a geometric decomposition of Lagrange elements.
Sections~\ref{sec:geodecompdivvec} and~\ref{sec:geodecompdivmatrix} explore the
geometric decompositions of vector face elements and matrix face elements,
respectively. Section~\ref{sec:divtensorspace} focuses on the constraint tensor space $\mathbb X$ and its bases. The geometric decomposition of $H(\div)$-conforming tensors is developed in Section~\ref{sec:geodecompdivtensor}. As the language of differential form is abstract, in the first few sections we present the results using vector and matrix language and then move to the differential forms in Sections \ref{sec:divtensorspace} and \ref{sec:geodecompdivtensor}.

\section{Notation and background}\label{sec:pre}
We summarize the most important notation and integer indices in the beginning:
\begin{itemize}
 \item $\mathbb R^n: n$ is the dimension of the ambient Euclidean space and $n\geq 2$;
\item $\mathbb P_r: r$ is the degree of the polynomial and $r\geq 0$;
 \item $\Lambda^k: k$ is the order of the differential form and $0\leq k\leq n$;
\item $\Delta_{\ell}(T): \ell$ is the dimension of a sub-simplex $f\in \Delta_{\ell}(T)$ and $0\leq \ell \leq n$.
\end{itemize}

\subsection{Simplex and sub-simplices}
Let $T \subset \mathbb{R}^{n}$ be an $n$-dimensional simplex with vertices $\texttt{v}_{0}, \texttt{v}_{1}, \ldots$, $\texttt{v}_{n}$ in general position. 
Following~\cite{ArnoldFalkWinther2009}, we let $\Delta(T)$ denote all the subsimplices of $T$, while $\Delta_{\ell}(T)$ denotes the set of subsimplices of dimension $\ell$, for $0\leq \ell \leq n$. 

For a sub-simplex $f\in \Delta_{\ell}(T)$, we will overload the notation $f$ for both the geometric simplex and the algebraic set of indices. Namely on one hand $f = \{f(0), \ldots, f(\ell)\}\subseteq \{0, 1, \ldots, n\}$, and on the other hand 
\[
f ={\rm Convex}(\texttt{v}_{f(0)}, \ldots, \texttt{v}_{f(\ell)}) \in \Delta_{\ell}(T)
\]
is the $\ell$-dimensional simplex spanned by the vertices $\texttt{v}_{f(0)}, \ldots, \texttt{v}_{f( \ell)}$. If $f \in \Delta_{\ell}(T)$, for $\ell = 0, \ldots, n-1$, then $f^{*} \in \Delta_{n- \ell-1}(T)$ denotes the sub-simplex of $T$ opposite to $f$. Algebraically treating $f$ as a subset of $\{0, 1, \ldots, n\}$, $f^*\subseteq \{0,1, \ldots, n\}$ so that $f\cup f^* = \{0, 1, \ldots, n\}$, i.e., $f^*$ is the complement of set $f$. Geometrically,
\[
f^* ={\rm Convex}(\texttt{v}_{f^*(1)}, \ldots, \texttt{v}_{f^*(n-\ell)}) \in \Delta_{n- \ell-1}(T)
\]
is the $(n- \ell-1)$-dimensional simplex spanned by vertices not contained in $f$. We refer to~\cite[Fig. 2]{Chen;Huang:2022FEMcomplex3D} for an illustration of $f$ and $f^*$.

Denote by $F_{i}$ the $(n-1)$-dimensional face opposite to vertex $\texttt{v}_i$, i.e., $F_i = \{ i\}^*$. Here capital $F$ is reserved for an $(n-1)$-dimensional face of $T$. For lower dimensional sub-simplices, we sometimes use more conventional notation. For example, the vertex will be denoted by $\texttt{v}_i$ and the edge formed by $\texttt{v}_i$ and $\texttt{v}_j$ will be denoted by $\texttt{e}_{ij}$. 


\subsection{$t$-$n$ bases}
For an $\ell$-dimensional sub-simplex $f\in \Delta_{\ell}(T)$, choose 
$\ell$ linearly independent tangential vectors $\{\bs t_1^f, \ldots, \bs t_{\ell}^f\}$ of $f$ and $n - \ell$ linearly independent normal vectors $\{\bs n_1^f, \ldots, \bs n_{n-\ell}^f\}$ of $f$.  
The set of $n$ vectors $\{\bs t_1^f, \ldots, \bs t_{\ell}^f$, $\bs n_1^f, \ldots, \bs n_{n-\ell}^f \}$ forms a basis of $\mathbb R^n$.
Notice that for $\ell = 0$, i.e., at vertices, there are no tangential vectors, and for $\ell = n$, there are no normal vectors. Define the tangent plane and normal plane of $f$ as 
\begin{equation*}
\mathscr T^f := {\rm span} \{\bs t_i^f, i=1, \ldots, \ell \},
\quad
\mathscr{N}^f :=  {\rm span} \{\bs n_i^f, i=1, \ldots, n - \ell \}.
\end{equation*}
All vectors are normalized but $\{\bs t_i^f\}$ or $\{ \bs n_i^f\}$ may not form an orthonormal basis. 

Inside the subspace $\mathscr T^f$, we can find a basis $\{\hat{\bs t}_1^f, \ldots, \hat{\bs t}_{\ell}^f\}$ dual to $\{\bs t_1^f, \ldots, \bs t_{\ell}^f\}$, i.e., $\hat{\bs t}_i\in \mathscr T^f$ and $(\hat{\bs t}_i, \bs t_j)=\delta_{i,j}$ for $i, j=1,\ldots,\ell$. Similarly we have a basis $\{\hat{\bs n}_1^f, \ldots, \hat{\bs n}_{n-\ell}^f\}$ of $\mathscr N^f$ and  $(\hat{\bs n}_i, \bs n_j)=\delta_{i,j}$ for $i, j=1,\ldots, n-\ell$. As $\mathscr T^f \perp \mathscr N^f$, the basis  $\{\hat{\bs t}_1^f, \ldots, \hat{\bs t}_{\ell}^f, \hat{\bs n}_{1}^f, \ldots, \hat{\bs n}_{n-\ell}^f\}$ is also dual to  $\{\bs t_1^f, \ldots, \bs t_{\ell}^f, \bs n_{1}^f, \ldots, \bs n_{n-\ell}^f\}$. 

Given a sub-simplex $f\in \Delta_{\ell}(T)$, we now present two bases for its normal plane $\mathscr N^f$ constructed in~\cite{ChenChenHuangWei2023}. 

Recall that we label $F_i$ as the $(n-1)$-dimensional face opposite to the $i$-th vertex. Then $f\subseteq F_i$ for $i\in f^*$. One basis is composed by unit normal vectors of all such $(n-1)$-dimensional faces:
\begin{equation*}
\{\bs n_{F_i}, i\in f^*\},
\end{equation*}
and will be called the {\it face normal basis}. 

We now give its dual basis in $\mathscr N^f$. 
For $f\in \Delta_{\ell}(T), \ell = 0,1,\ldots, n-1$ and $i\in f^*$, let $f\cup\{i\}$ denotes the $(\ell+1)$-dimensional face in $\Delta_{\ell+1}(T)$ with vertices $\{i, f(0),\dots, f(\ell)\}$. Let $\bs n_{f\cup\{i\}}^f$ be a unit normal vector of $f$ but tangential to $f\cup\{i\}$. The basis
\begin{equation*}
\{\bs n_{f\cup\{i\}}^f, i\in f^*\}
\end{equation*}
will be called the {\it tangential normal basis}. 

\begin{lemma}\label{lem:normaldual}
For $f\in \Delta_{\ell}(T)$, the rescaled tangential normal basis $\{\bs n_{f\cup\{i\}}^f/(\bs n_{f\cup\{i\}}^f\cdot\bs n_{F_i}), i\in f^*\}$ of $\mathscr N^f$ is dual to the face normal basis $\{\bs n_{F_i}, i\in f^*\}$.
\end{lemma}
\begin{proof}
Clearly $\bs n_{f\cup\{i\}}^f, \bs n_{F_i}\in \mathscr N^f$ for $i\in f^*$. It suffices to prove
\begin{equation*}
 \bs n_{f\cup\{i\}}^f\cdot\bs n_{F_j}=0 \quad \textrm{ for } i,j\in f^*, i\neq j,
\end{equation*}
which follows from the fact $f\cup\{i\}\subseteq F_j$ and $\bs n_{f\cup\{i\}}^f\in\mathscr T^{f\cup\{i\}}$.
\end{proof}


\begin{figure}[htbp]
\subfigure[Basis $\{\boldsymbol{t}_{0,1}, \boldsymbol{t}_{0,2}, \boldsymbol{t}_{0,3}\}$ and $\{\nabla \lambda_1, \nabla \lambda_2, \nabla \lambda_3\}$.]{
\begin{minipage}[t]{0.5\linewidth}
\centering
\includegraphics[width=5.0cm]{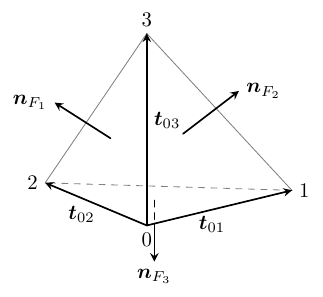}
\end{minipage}}
\subfigure[Basis $\{\bs n_{F_2}, \bs n_{F_3}\}$ and $\{ \bs n^{\{0,1\}}_{\{0,1,2\}}, \bs n^{\{0,1\}}_{\{0,1,3\}}\}.$]
{\begin{minipage}[t]{0.5\linewidth}
\centering
\includegraphics*[width=5.3cm]{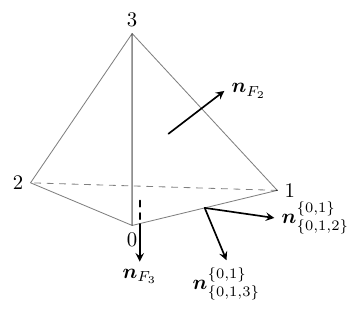}
\end{minipage}}
\caption{Face normal basis and tangential normal basis of a vertex and an edge in a tetrahedron.}
\label{fig:normalbasis}
\end{figure}


\begin{example}\rm
An important example is $f\in \Delta_0(T)$, i.e., $f$ is a vertex. Without loss of generality, let $f = \{0\}$. Then $\bs n_{f\cup\{i\}}^f$ is a unit normal vector of edge $\{0,i\}$: $\bs t_{0i}$ or $\bs t_{i0}$ depending on the orientation. Its dual basis is $\{\bs n_{F_i}/(\bs n_{F_i}\cdot \bs t_{0i}), i=1, \ldots, n\}$. See Fig.~\ref{fig:normalbasis} (a).
\end{example}

\begin{example}\rm
 Let $f = \{0,1\}$ be an edge of a tetrahedron. Then we have two bases of the normal plane $\mathscr N^f$: $\{\bs n_{F_2}, \bs n_{F_3}\}$ and $\{ \bs n^{\{0,1\}}_{\{0,1,2\}}, \bs n^{\{0,1\}}_{\{0,1,3\}}\}.$ They are dual to each other with an appropriate rescaling. See Fig.~\ref{fig:normalbasis} (b). 
\end{example}

\subsection{Barycentric coordinates and Bernstein polynomials}
For a domain $\Omega \subseteq \mathbb{R}^{n}$ and integer $r \geqslant 0$, $\mathbb{P}_{r}(\Omega)$ denotes the space of real valued polynomials defined on $\Omega$ of degree less than or equal to $r$. For simplicity, we let $\mathbb{P}_{r}=\mathbb{P}_{r}\left(\mathbb{R}^{n}\right) .$ Hence, if $n$-dimensional domain $\Omega$ has nonempty interior, then $\operatorname{dim} \mathbb{P}_{r}(\Omega)=\operatorname{dim} \mathbb{P}_{r}=\displaystyle{r+n \choose n}.$
When $\Omega = \{\texttt{v}\}$ is a point, $\mathbb{P}_{r}(\texttt{v})=\mathbb{R} \text { for all } r \geqslant 0$. And we set $\mathbb{P}_{r}(\Omega)= \{0\}$ when $r<0$. 
Let $\mathbb{H}_{r}(\Omega)$ denote the space of real valued polynomials defined on $\Omega$ of degree $r$.

For $n$-dimensional simplex $T$, we denote by $\lambda_{0}, \lambda_{1}, \ldots, \lambda_{n}$ the barycentric coordinate functions with respect to $T$. That is $\lambda_{i} \in \mathbb{P}_{1}(T)$ and $\lambda_{i}\left(\texttt{v}_j\right)=\delta_{i,j}, 0 \leqslant i, j \leqslant n$, where $\delta_{i,j}$ is the Kronecker delta function. The functions $\{\lambda_{i}, i=0,1,\ldots, n\}$ form a basis for $\mathbb{P}_{1}(T)$, $\sum_{i=0}^n\lambda_i (x)= 1$, and $0\leq \lambda_i(x)\leq 1, i=0,1,\ldots,n,$ for $x\in T$. The sub-simplices of $T$ correspond to the zero sets of the barycentric coordinates. Indeed $f = \{x\in T\mid \lambda_i(x) = 0, i\in f^*\}$ for $f\in \Delta_{\ell}(T)$.

We will use the multi-index notation $\alpha \in \mathbb{N}^{n}$, meaning $\alpha=\left(\alpha_{1}, \ldots, \alpha_{n}\right)$ with integer $\alpha_{i} \geqslant 0.$ We define $x^{\alpha}=x_{1}^{\alpha_{1}} \cdots x_{n}^{\alpha_{n}}$, and
$|\alpha|:=\sum_{i=1}^n \alpha_{i} .$ We will also use the set $\mathbb{N}^{0: n}$ of multi-indices $\alpha=\left(\alpha_{0}, \ldots, \alpha_{n}\right)$, with $\lambda^{\alpha}:=\lambda_{0}^{\alpha_{0}} \cdots \lambda_{n}^{\alpha_{n}}$ for $\alpha\in \mathbb{N}^{0: n}$.

We introduce the simplicial lattice~\cite{Chen;Huang:2022FEMcomplex3D,ChenHuang2021Cmgeodecomp}, which is also known as the principal lattice~\cite{nicolaides1973class}. A simplicial lattice of degree $r$ and dimension $n$ is a multi-index set of $n+1$ components and with fixed length $r$, i.e.,
\[
\mathbb T^{n}_r = \left \{ \alpha = (\alpha_0, \alpha_1, \ldots, \alpha_n)\in\mathbb N^{0:n} \mid \alpha_0 + \alpha_1 + \ldots + \alpha_n = r \right \}.
\]
An element $\alpha\in \mathbb T^{n}_r$ is called a node of the lattice. 
The Bernstein representation of polynomial of degree $r$ on a simplex $T$ is
\[
\mathbb P_r(T) :={\rm span}\{ \lambda^{\alpha} = \lambda_{0}^{\alpha_0}\lambda_{1}^{\alpha_1}\ldots \lambda_{n}^{\alpha_n}, \alpha\in \mathbb T_r^{n}\}.
\]
In the Bernstein form, for an $f\in \Delta_{\ell}(T)$, 
\[
\mathbb P_r(f) ={\rm span}\{ \lambda_{f}^{\alpha} = \lambda_{f(0)}^{\alpha_0}\lambda_{f(1)}^{\alpha_1}\ldots \lambda_{f(\ell)}^{\alpha_{\ell}},  \alpha \in \mathbb T_r^{\ell}\}.
\]
Through the natural extension defined by the barycentric coordinate, $\mathbb P_r(f)\subseteq \mathbb P_r(T)$. 
The bubble polynomial of $f$ is a polynomial of degree $\ell + 1$:
\[
b_f: = \lambda_{f} = \lambda_{f(0)}\lambda_{f(1)}\ldots \lambda_{f(\ell)}\in \mathbb P_{\ell + 1}(f).
\]

We have the following property of the bubble polynomial $b_f$.
\begin{lemma}\label{lm:bf}
Let $f,e\in \Delta(T)$. If $f\not\subseteq e$, then $b_f\mid_e = 0$. 
\end{lemma}
\begin{proof}
As $f = (f\cap e^*)\cup (f\cap e)$ and $f\not\subseteq e$, we conclude $f\cap e^* \neq \varnothing$. So $b_f$ contains $\lambda_i$ for some $i\in e^*$ and consequently $b_f|_e = 0$. 
\end{proof}
In particular, $b_f$ vanishes at all sub-simplices other than $f$ with dimensions $\leq \dim f$, and higher dimensional sub-simplices not containing $f$.  

\subsection{Geometric decomposition of Lagrange elements}\label{sec:geodecompLagrange}
We begin with the geometric decomposition of the Lagrange element. 
%
The following geometric decomposition of Lagrange element is given in~\cite{ArnoldFalkWinther2009} without proofs. As it is the foundation of other geometric decompositions in later sections, we present it using our notation and provide a detailed proof. We refer to~\cite[Fig. 2.1]{ArnoldFalkWinther2009} for an illustration of this geometric decomposition. For the $0$-dimensional face, i.e., a vertex $\texttt{v}$, we understand that
$\int_{\texttt{v}} u \dd s = u(\texttt{v})$ for $1\in \mathbb P_r(\texttt{v})=\mathbb R$.

\begin{theorem}[Geometric decomposition of Lagrange element, (2.6) in~\cite{ArnoldFalkWinther2009}]\label{thm:Lagrangedec}
For the polynomial space $\mathbb P_r(T)$ with $r\geq 1$ on an $n$-dimensional simplex $T$, we have the following decomposition 
\begin{align}
\label{eq:Prdec}
\mathbb P_r(T) &= \Oplus_{\ell = 0}^n\Oplus_{f\in \Delta_{\ell}(T)} b_f\mathbb P_{r - (\ell +1)} (f).
\end{align}
And the function $u\in \mathbb P_r(T)$ is uniquely determined by DoFs
\begin{equation}\label{eq:dofPr}
\int_f u \, p \dd s, \quad \quad~p\in \mathbb P_{r - (\ell +1)} (f), f\in \Delta_{\ell}(T), \ell = 0,1,\ldots, n.
\end{equation}
\end{theorem}
\begin{proof}
We first prove the decomposition~\eqref{eq:Prdec}. Each component $b_f\mathbb P_{r - (\ell +1)} (f)\subset \mathbb P_r(T)$ and the sum is direct due to the property of $b_f$, cf. Lemma~\ref{lm:bf}. Then count the dimensions and use the combinatorial identity
$$
\sum_{\ell =0}^n { n+1 \choose \ell + 1} { r - 1 \choose r - \ell - 1} = { n + r \choose r},
$$
which can be proved by looking at the coefficient of $x^r$ in $(1+x)^{n+1} (1+x)^{r-1}= (1+x)^{n+r}$. 

To prove the unisolvence, we choose a basis $\{\phi_i \}$ of $\mathbb P_r(T) $ by the decomposition~\eqref{eq:Prdec} and denote DoFs~\eqref{eq:dofPr} as $\{ N_i\}$. By construction, the dimension of $\{\phi_i \}$ matches the number of DoFs $\{ N_i\}$. The DoF-Basis matrix $(N_i(\phi_j))$ is thus square and block lower triangular in the sense that for $\phi_{f}\in b_f\mathbb P_{r - (\ell +1)} (f),$ 
\[
\int_{e}\phi_{f} p\dd s=  0, \quad \quad e \in \Delta_m(T)\text{ with } m\leq \ell \text{ and } e\neq f, p\in \mathbb P_{r-\dim e - 1}(e)
\]
due to the property of $b_f$ established in Lemma~\ref{lm:bf}. Each diagonal block matrix is the Gram matrix 
$$
\int_f p q b_f \dd x_f, \quad p, q \in \mathbb P_{r-(\ell + 1)}(f),
$$
in the measure $b_f \dd x_f$ and thus symmetric and positive definite. In particular, it is invertible.  
So the unisolvence follows from the invertibility of this lower triangular matrix; see below for an illustration. 
\begin{equation}\label{eq:lowertriangular}
\renewcommand{\arraystretch}{1.35}
\begin{array}{cc}
\begin{array}{c}
 \; N_f \backslash \;   \phi_f 
\end{array}
 &  
\begin{array}{ccccc}
\;0 \qquad\qquad\;\;\;\;& \!\!\!1\;\;\;\;\; \quad\quad& \;\ldots\quad\quad\;\;\;	&  n-1\qquad\quad & \quad\quad\;\;\, n\quad\quad
\end{array}
\medskip
\\
\begin{array}{c}
0 \\ 1 \\ \vdots \\ n-1 \\ n
\end{array} 
& \left(
\begin{array}{>{\hfil$}m{1.5cm}<{$\hfil}|>{\hfil$}m{1.9cm}<{$\hfil}|>{\hfil$}m{0.5cm}<{$\hfil}|>{\hfil$}m{2.5cm}<{$\hfil}|>{\hfil$}m{1.9cm}<{$\hfil}}
\square & 0 & \cdots	& 0 & 0 \\
\hline
\square & \square & \cdots	& 0 & 0 \\
\hline
\vdots & \vdots & \ddots	& \vdots & \vdots \\
\hline
\square & \square & \cdots	& \square & 0 \\
\hline
\square & \square & \cdots	& \square& \square 
\end{array}
\right)
\end{array}
\end{equation}
\end{proof}
\begin{remark}\rm
 It is important to note that $\mathbb P_{r-(\ell +1)}(f) = \{0\}$ when $r< \ell + 1$. As a result, the last non-zero term in \eqref{eq:Prdec} corresponds to $\ell \leq \min\{ r-1 , n\}$. This implies that the degree of the polynomial dictates the dimension of the sub-simplex in decomposition \eqref{eq:Prdec}. For instance, with quadratic polynomials, the summation includes only edge bubbles and excludes face bubbles and higher dimensions. Despite this, the full summation notation $\Oplus_{\ell=0}^n$ is retained for simplicity, with the implicit understanding that the range of non-zero sub-spaces will automatically truncate the limits.
\end{remark}

Let $\{\mathcal {T}_h\}$ be a family of partitions
of $\Omega$ into nonoverlapping simplices with $h_T:=\mbox{diam}(T)$ and $h:=\max_{T\in \mathcal {T}_h}h_T$.
The mesh $\mathcal T_h$ is conforming in the sense that the intersection of any two simplices is either empty or a common lower sub-simplex. Let $\Delta_{\ell}(\mathcal T_h)$ be the set of all $\ell$-dimensional sub-simplices of the partition $\mathcal {T}_h$ for $\ell=0, 1, \ldots, n$. The Lagrange finite element space 
\[
S_h^r := \{ v\in C(\Omega): v\!\!\mid_T\in \mathbb P_r(T), \forall~T\in \mathcal T_h, \text{ DoFs } \eqref{eq:dofPr} \text{ are single-valued}\},
\] 
has the geometric decomposition
\[
S_h^r = \Oplus_{\ell = 0}^n\Oplus_{f\in \Delta_{\ell}(\mathcal T_h)} b_f\mathbb P_{r - (\ell +1)} (f).
\]
Here we extend the polynomial $b_f\mathbb P_{r - (\ell +1)} (f)$ to each element $T$ containing $f$ by the Bernstein form in the barycentric coordinate and thus it is a piecewise polynomial function and continuous in $\Omega$. Consequently $S_h^r\subset H^1(\Omega)$ and the dimension of $S_h^r$ is 
\[
\dim S_h^r = \sum_{\ell = 0}^n |\Delta_{\ell}(\mathcal T_h)| { r - 1 \choose \ell},
\]
where $|\Delta_{\ell}(\mathcal T_h)|$ is the cardinality, i.e., the number of $\ell$-dimensional simplices in $\mathcal T_h$.

The geometric decomposition of the vector Lagrange elements is a straightforward generalization:
\begin{align}
\label{eq:vectorLagrange}
\mathbb P_r(T; \mathbb R^n)  &= \Oplus_{\ell = 0}^n\Oplus_{f\in \Delta_{\ell}(T)} \left [b_f \mathbb P_{r - (\ell +1)} (f) \otimes \mathbb R^n \right ].
\end{align}
In~\eqref{eq:vectorLagrange}, a fixed orthonormal basis of $\mathbb R^n$ is implicitly assumed in which the vector is expanded. It is usually the Cartesian coordinate describing the domain $\Omega$.

\section{Geometric Decompositions of Vector Face Elements}\label{sec:geodecompdivvec}
In this section we consider $H(\div)$-conforming vector finite elements with local shape function space $\mathbb P_r(T;\mathbb R^n)$. Define $H(\div, \Omega):=\{\bs v\in L^2(\Omega;\mathbb R^n): \div \bs v\in L^2(\Omega)\}.$ For a subdomain $K\subseteq \Omega$, the trace operator for the div operator is
$$
{\rm tr}_K^{\div} \boldsymbol v = \boldsymbol v\cdot \boldsymbol n|_{\partial K} \quad \textrm{ for }\;\;\boldsymbol{v}\in C(K),
$$
where $\bs n$ denotes the outwards unit normal vector of $\partial K$. 
Given a triangulation $\mathcal T_h$ and a piecewise smooth function $\bs u$, it is well known that $\bs u\in H(\div, \Omega)$ if and only if $\bs n_F \cdot \bs u$ is continuous across all faces $F\in \Delta_{n-1}(\mathcal T_h)$, which can be ensured by having DoFs on faces. 
An $H(\div)$-conforming finite element is thus also called a face element.
%
%
%
%

\subsection{Examples in three dimensions}\label{sec:divexample}
We first use three-dimensional examples to illustrate the main idea. 
Recall that the geometric decomposition of the vector Lagrange elements in three dimensions reads 
\begin{align}
\label{eq:vectorLagrange3d}
\mathbb P_r(T; \mathbb R^3)  &= \Oplus_{\ell = 0}^3\Oplus_{f\in \Delta_{\ell}(T)} \left [b_f \mathbb P_{r - (\ell +1)} (f) \otimes \mathbb R^3 \right ]. 
\end{align}
A fixed orthonormal basis $\{\bs e_i\}_{i=1}^3$ of $\mathbb R^3$ is used in \eqref{eq:vectorLagrange3d}. See Fig.~\ref{fig:redistribution} (a).

An $H(\div)$ function is a vector proxy of an $(n-1)$-form; see Section \ref{sec:divtensorspace}. As a differential form, it is an intrinsic quantity independent of the choice of coordinates/frames. Based on this observation, we shall choose different frames at different sub-simplex $f\in \Delta_{\ell}(T)$. 

For $f\in \Delta_{\ell}(T)$ with $\ell=0,1,2,3$,
the tangent plane and normal plane of $f$ are 
\begin{equation*}
\mathscr T^f = {\rm span} \{\bs t_i^f, i=1, \ldots, \ell \},
\quad
\mathscr{N}^f =  {\rm span} \{\bs n_i^f, i=1, \ldots, 3 - \ell \}.
\end{equation*}
Then $\mathbb R^3$ admits a tangential-normal ($t$-$n$) decomposition $\mathbb R^3=\mathscr T^f\oplus^{\perp} \mathscr N^f$. Coupled with the bubble polynomials, we obtain a $t$-$n$ decomposition of $\mathbb P_r(T;\mathbb R^3)$ as 
\[
\mathbb P_r(T; \mathbb R^3)  = \Oplus_{\ell = 0}^3\Oplus_{f\in \Delta_{\ell}(T)} \left [\mathbb B_r\mathscr T^f\oplus \mathbb B_r\mathscr N^f \right ],
\]
where $$
\mathbb B_r\mathscr T^f = b_f\mathbb P_{r - (\ell +1)} (f)\otimes \mathscr T^f, \quad 
\mathbb B_r\mathscr N^f = b_f\mathbb P_{r - (\ell +1)} (f)\otimes \mathscr N^f.$$
Notice that for a vertex $\texttt{v}\in \Delta_0(T)$, $\mathbb B_r\mathscr T^{\texttt{v}} = \{ 0\}$, and $\mathbb B_r\mathscr N^{\texttt{v}} = \lambda_{\texttt{v}}\mathbb R^3$ as $b_{\texttt{v}} = \lambda_{\texttt{v}}, \mathbb P_{r - 1} (\texttt{v})= \mathbb R$, and $\mathscr N^{\texttt{v}} = \mathbb R^3$.

Define the polynomial bubble space $\mathbb B_r(\div, T) := \ker({\rm tr}^{\div})\cap \mathbb P_r(T; \mathbb R^3),$
where ${\rm tr}^{\div} \boldsymbol u = \boldsymbol u\cdot \boldsymbol n|_{\partial T}$. 
The tangential component will form the div bubble space: for $r\geq 2$, it holds that
\begin{equation*}
\mathbb B_r(\div, T) =  \Oplus_{\ell = 1}^3\Oplus_{f\in \Delta_{\ell}(T)} \mathbb B_r\mathscr T^f.
\end{equation*}
Verification of
$\mathbb B_r\mathscr T^f \subseteq \mathbb B_{r}(\div,T)$
is straightforward. For face $F$ not containing $f$, $b_f|_F = 0$. For face $F$ containing $f$, $\tr_F^{\div} \bs u = \bs u \cdot \bs n_F  = 0$ as $\bs t_i^f\cdot \bs n_F = 0$. 

\begin{figure}[htbp]
\subfigure[Fixed Cartesian basis $\{\bs e_i\}_{i=1}^3$ is used for the vector Lagrange element.]{
\begin{minipage}[t]{0.305\linewidth}
\centering
\includegraphics*[width=3.35cm]{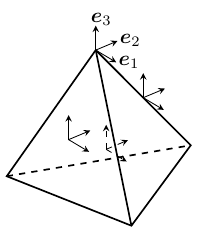}
\end{minipage}}
\quad
\subfigure[In N\'ed\'elec/BDM element, face normal basis $\{\bs n_F, f\subseteq F\}$ is used for $\mathcal N^f$.]
{\begin{minipage}[t]{0.305\linewidth}
\centering
\includegraphics*[width=3.45cm]{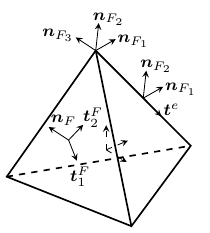}
\end{minipage}}
\quad
\subfigure[Facewise redistribution of the normal DoFs for the quadratic face element.]
{\begin{minipage}[t]{0.305\linewidth}
\centering
\includegraphics*[width=3.35cm]{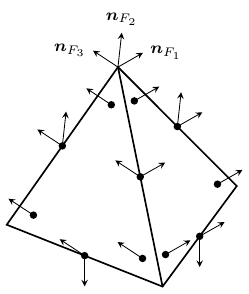}
\end{minipage}}
\caption{Classical face elements can be obtained by a special $t$-$n$ decomposition and face-wise redistribution of normal components.}
\label{fig:redistribution}
\end{figure}

The normal component will contribute to the trace. We can derive the second family of N\'ed\'elec element~\cite{Nedelec1986,BrezziDouglasDuranFortin1987}, which generalizes the Brezzi-Douglas-Marini (BDM) element~\cite{Brezzi.F;Douglas.J;Marini.L1985} in two dimensions,  from a special $t$-$n$ basis. For face $F\in \Delta_{2}(T)$, choose two linearly independent tangential vectors $\bs t_1^F, \bs t_{2}^F$ of $F$ and one normal vector $\bs n_F$ of $F$. For edge $e\in \Delta_{1}(T)$, choose a tangential vector $\bs t^e$ of $e$ and the face normal basis vectors $\{\bs n_{F_1}, \bs n_{F_2}\}$, where $F_1,F_2$ are two faces containing $e$. For vertex $\texttt{v}\in \Delta_{0}(T)$, choose $\{\bs n_{F_1}, \bs n_{F_2}, \bs n_{F_3}\}$, where $F_1, F_2, F_3$ are three faces containing vertex $\texttt{v}$. For tetrahedron $T$, we keep the canonical basis $\{\bs e_i\}_{i=1}^3$, which is considered as tangential vectors of $T$. See Fig.~\ref{fig:redistribution} (b).

We then group normal components $\{\mathbb B_r\mathscr N^f, f\in \Delta(T)\}$ face by face. On each face $F$, again by the geometric decomposition of Lagrange element, the normal components at different sub-simplices $f\in \Delta(F)$ will determine the scalar function $\bs u\cdot \bs n_F$. In Fig.~\ref{fig:redistribution} (c), we show such face-wise redistribution for a quadratic face element. 

\begin{example}[N\'ed\'elec element/BDM element] \rm
Taking $\mathcal N^f :={\rm span}\{\bs n_{F_i}, f\subset F_i\}$, the following DoFs
\begin{subequations}\label{eq:vecbdDoFs}
\begin{align}
\label{eq:vecbdDoFvertex3d}
(\bs v\cdot \bs n_{F_i})|_{F_i}(\texttt{v}),& \quad F_i\in\Delta_2(T), \texttt{v}\in\Delta_{0}(F_i),  \\
\label{eq:vecbdDoFedge3d}
\int_e (\bs v\cdot \bs n_{F_i})|_{F_i}\ p \dd s,& \quad F_i\in\Delta_2(T), e\in\Delta_{1}(F_i), p\in\mathbb P_{r-2}(e), \\
\label{eq:vecbdDoFface3d}
\int_{F_i} (\bs v\cdot \bs n_{F_i})\ p \dd s, &\quad F_i\in\Delta_{2}(T), p\in\mathbb P_{r-3}(F_i), \\
\int_T \bs v \cdot \bs p \dx,&\quad \bs p\in \mathbb B_r(\div, T) \label{eq:bubbleDoF3d}
\end{align}
\end{subequations}
define the N\'ed\'elec element/BDM element. 
DoFs~\eqref{eq:vecbdDoFvertex3d}-\eqref{eq:vecbdDoFface3d} are all located on face $F_i$, and uniquely determine $(\bs v\cdot \bs n_{F_i})\mid_{F_i}\in \mathbb P_r(F_i)$. Hence DoFs~\eqref{eq:vecbdDoFvertex3d}-\eqref{eq:vecbdDoFface3d} can be combined to one DoF
$$
\int_{F_i} (\bs v\cdot \bs n_{F_i})\ p \dd s, \quad  F_i\in\Delta_{2}(T),p\in\mathbb P_{r}(F_i).
$$
The interior DoF~\eqref{eq:bubbleDoF3d} can be further replaced by $\boldsymbol{p}\in \big(\mathbb P_{r-2}(T;\mathbb R^n) \oplus \mathbb H_{r-2}(T; \mathbb K)\boldsymbol x\big)$ with $\mathbb K$ being the skew-symmetric matrix space; see~\cite{Chen;Huang:2021divFinite}.
Therefore DoFs~\eqref{eq:vecbdDoFs} induce the N\'ed\'elec/BDM element; see Fig.~\ref{fig:divelement} (a).  
\end{example}

Different $H(\div)$-conforming finite elements can be obtained by different $t$-$n$ basis. %

\begin{example}[Stenberg element] \rm 
Taking $\{\bs n_i^{\texttt{v}}\}_{i=1}^3 = \{\boldsymbol{e}_i\}_{i=1}^3$ and $\{\bs n_i^e\}_{i=1}^2 = \{\bs n_{F_i}\}_{i=1}^2$ as two face $F_i$ sharing $e$, the following DoFs
\begin{align*}
\bs v(\texttt{v}),& \quad \texttt{v}\in\Delta_{0}(T),  \\
\int_e (\bs v\cdot \bs n_{F_i})|_{F_i} \ p \dd s,& \quad F_i\in\Delta_2(T), e\in\Delta_{1}(F_i), p\in\mathbb P_{r-2}(e), \\
\int_F \bs v\cdot \bs n_F\ p \dd s, &\quad F\in\Delta_{2}(T), p\in\mathbb P_{r-3}(F), \\
\int_T \bs v \cdot \bs p \dx,&\quad \bs p\in \mathbb B_r(\div, T) 
\end{align*}
induce the Stenberg element~\cite{stenbergNonstandardMixedFinite2010}, which is continuous at vertices; see Fig.~\ref{fig:divelement} (b).  
\end{example}

\begin{figure}[htbp]
\subfigure[N\'ed\'elec/BDM element.]
{\begin{minipage}[t]{0.305\linewidth}
\centering
\includegraphics*[width=3.25cm]{figures/3DBDM.pdf}
\end{minipage}}
\subfigure[Stenberg element.]{
\begin{minipage}[t]{0.305\linewidth}
\centering
\includegraphics*[width=3.45cm]{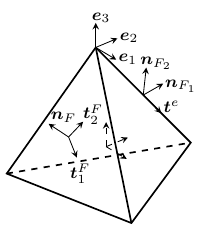}
\end{minipage}}
\subfigure[CHH element.]
{\begin{minipage}[t]{0.305\linewidth}
\centering
\includegraphics*[width=3.275cm]{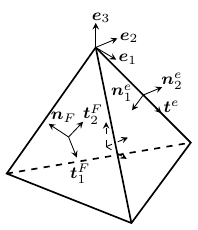}
\end{minipage}}
\caption{Different $H(\div)$-conforming finite elements can be obtained by different $t$-$n$ decompositions.
}
\label{fig:divelement}
\end{figure}

\begin{example}[Christiansen-Hu-Hu element] \rm
Taking $\{\bs n_i^{\texttt{v}}\}_{i=1}^3 = \{\boldsymbol{e}_i\}_{i=1}^3$ and $\{\bs n_i^e\}_{i=1}^2$ depending only on $e$, the following DoFs
\begin{align*}
\bs v(\texttt{v}),& \quad \texttt{v}\in\Delta_{0}(T),  \\
\int_e (\bs v\cdot \bs n_i^e)\ p \dd s,& \quad e\in\Delta_{1}(T), p\in\mathbb P_{r-2}(e),  i=1,2, \\
\int_F (\bs v\cdot \bs n_F)\ p \dd s, &\quad F\in\Delta_{2}(T), p\in\mathbb P_{r-3}(F), \\
\int_T \bs v \cdot \bs p \dx,&\quad \bs p\in \mathbb B_r(\div, T) 
\end{align*}
define the Christiansen-Hu-Hu (CHH) face element~\cite[Section 3.5]{christiansenNodalFiniteElement2018}, which has extra continuity at vertices and on the normal planes of edges; see Fig.~\ref{fig:divelement} (c).   
\end{example}

\subsection{Face elements in arbitrary dimension}
We now present and prove the result in arbitrary $n\geq 2$ dimensions. 
Again the main idea is using different and in general non-orthonormal $t$-$n$ decomposition adapted to different sub-simplices. 

\subsubsection{Bubble spaces}
For an $\ell$-dimensional sub-simplex $f\in \Delta_{\ell}(T)$, choose 
$\ell$ linearly independent tangential vectors $\{\bs t_1^f, \ldots, \bs t_{\ell}^f\}$ of $f$ and $n - \ell$ linearly independent normal vectors $\{\bs n_1^f, \ldots, \bs n_{n-\ell}^f\}$ of $f$.  
The set of $n$ vectors $\{\bs t_1^f, \ldots, \bs t_{\ell}^f, \bs n_1^f, \ldots, \bs n_{n-\ell}^f \}$ forms a basis of $\mathbb R^n$.
Notice that for $\ell = 0$, i.e., at vertices, there are no tangential vectors and for $\ell = n$, there are no normal vectors. Define the tangent plane and normal plane of $f$ as 
\begin{equation*}
\mathscr T^f := {\rm span} \{\bs t_i^f, i=1, \ldots, \ell \},
\quad
\mathscr{N}^f :=  {\rm span} \{\bs n_i^f, i=1, \ldots, n - \ell \}.
\end{equation*}
All vectors are normalized but $\{\bs t_i^f\}$ or $\{ \bs n_i^f\}$ may not be an orthonormal basis. 
Coupled with the bubble polynomial, for $r\geq 1$, define 
\[
\mathbb B_r\mathscr T^f = b_f\mathbb P_{r - (\ell +1)} (f)\otimes \mathscr T^f, \quad \mathbb B_r\mathscr N^f = b_f\mathbb P_{r - (\ell +1)} (f)\otimes \mathscr N^f.
\]

\begin{lemma} \label{lm:vecLdec}
The shape function space $ \mathbb P_r(T; \mathbb R^n)$ has a geometric decomposition
\begin{equation}\label{eq:Prvecdec}
 \mathbb P_r(T; \mathbb R^n)  = \Oplus_{\ell = 0}^n\Oplus_{f\in \Delta_{\ell}(T)} \left [ \mathbb B_r\mathscr T^f  \oplus \mathbb B_r\mathscr N^f \right ].
\end{equation}
A function $\bs u \in \mathbb P_r(T; \mathbb R^n)$ is uniquely determined by the DoFs: for all $f\in \Delta_{\ell}(T), \ell = 0,1,\ldots, n$,
\begin{subequations}\label{eq:dofvec}
\begin{align}
\label{eq:dofvectort}
\int_f (\bs u\cdot \bs t_i^f)\ p \dd s,& \quad i=1,\ldots, \ell, p\in \mathbb P_{r - (\ell +1)} (f), \\
\label{eq:dofvectorn}
\int_f (\bs u\cdot \bs n_j^f)\ p \dd s, &\quad  j=1,\ldots, n - \ell, p\in \mathbb P_{r - (\ell +1)} (f).
\end{align}
\end{subequations}
\end{lemma}
\begin{proof}
Since $\{\bs t_1^f,\ldots, \bs t_{\ell}^f, \bs n_1^f,\ldots, \bs n_{n-\ell}^f\}$ forms a basis of $\mathbb R^n$, DoFs~\eqref{eq:dofvectort}-\eqref{eq:dofvectorn} are equivalent to 
\begin{equation*}
\int_f \bs u\cdot \bs p \dd s, \quad \quad~\bs p\in \mathbb P_{r - (\ell +1)} (f;\mathbb R^n).
\end{equation*}
Then the unisolvence follows from Theorem~\ref{thm:Lagrangedec} for the Lagrange element.
\end{proof}

Next we use $\mathbb B_r\mathscr T^f$ or $\mathbb B_r\mathscr N^f$ to characterize the kernel or range of the trace operator, respectively. Define the polynomial bubble space 
\[\mathbb B_r(\div, T) := \ker({\rm tr}^{\div})\cap \mathbb P_r(T; \mathbb R^n).\]
It is obvious that $\mathbb B_0(\div, T) = \{\boldsymbol0\}$ and $\mathbb B_1(\div, T) = \{\boldsymbol0\}$.

\begin{lemma}\label{lem:divbubbletracespacedecomp}
For $r\geq 2$, it holds that
\begin{equation}\label{eq:divbubbledecomp}
\mathbb B_r(\div, T) =  \Oplus_{\ell = 1}^n\Oplus_{f\in \Delta_{\ell}(T)} \mathbb B_r\mathscr T^f,
\end{equation}
and 
\begin{equation}\label{eq:trNrf}
{\rm tr}^{\div} : \Oplus_{\ell = 0}^{n-1}\Oplus_{f\in \Delta_{\ell}(T)}  \mathbb B_r\mathscr N^f \to  {\rm tr}^{\div} \, \mathbb P_r(T; \mathbb R^n)
\end{equation}
is a bijection. Consequently
\begin{equation*}
\dim \mathbb B_r(\div, T)  = \sum_{\ell=1}^n{n+1 \choose \ell +1} {r-1 \choose \ell} {\ell \choose 1}.
\end{equation*}
\end{lemma}
\begin{proof}
Verification of
\begin{equation*}
\Oplus_{\ell = 1}^n\Oplus_{f\in \Delta_{\ell}(T)} \mathbb B_r\mathscr T^f  \subseteq \mathbb B_{r}(\div,T)
\end{equation*}
is straightforward. For face $F$ not containing $f$, $b_f|_F = 0$. For face $F$ containing $f$, $\tr_F^{\div} \bs u = \bs u \cdot \bs n_F  = 0$ as $\bs t_i^f \cdot \bs n_F = 0$ for $\bs u\in \mathbb B_r\mathscr T^f$. 
 
Then apply the trace operator to the decomposition~\eqref{eq:Prvecdec} and use $\tr^{\div}(\mathbb B_r\mathscr T^f) = 0$ to obtain $${\rm tr}^{\div} \left ( \Oplus_{\ell = 0}^{n-1}\Oplus_{f\in \Delta_{\ell}(T)}  \mathbb B_r\mathscr N^f\right ) =  {\rm tr}^{\div} \, \mathbb P_r(T; \mathbb R^n).$$ So the map $\tr^{\div}$ in~\eqref{eq:trNrf} is onto. 

We then prove it is also injective. For an $f\in \Delta_{\ell}(T)$, recall that $\{ \bs n_{F_i}, i\in f^*\}$ is the face normal basis of $\mathscr N^f$ and $\{ \hat{\bs n}_{F_i}, i\in f^*\}$ is its dual basis of $\mathscr N^f$. We expand $\bs u\in\Oplus_{\ell = 0}^{n-1}\Oplus_{f\in \Delta_{\ell}(T)}  \mathbb B_r\mathscr N^f$ in the dual basis, i.e., $\bs u=\sum\limits_{\ell = 0}^{n-1}\sum\limits_{f\in \Delta_{\ell}(T)} \sum_{i\in f^*}b_fp_f^i \hat{\bs n}_{F_i} $ with $p_f^i \in \mathbb P_{r-(\ell +1)}(f)$. We will prove if  $\tr^{\div}\bs u = 0$, then $\bs u = 0$.

To do so, we consider the operator $N_i^f(\bs u) = (\bs u\cdot \bs n_{F_i})|_f$. Condition $\tr^{\div}\bs u = 0$ implies $N_i^f(\bs u) = 0$ for all $f\in \Delta(T)$ and $i\in f^*$. By the choice of the basis of $\mathscr N^f$ and the property of bubble polynomial $b_f$, the corresponding $N$-$\phi$ matrix is block lower  triangular with diagonal matrices in the diagonal block; see \eqref{eq:lowertriangular} for an illustration. Therefore $N_i^f(\bs u) = 0$ implies $\bs u = 0$. More specifically, we have $N_i^{\texttt{v}}(\bs u) = p^i_{\texttt{v}} = 0$ for $\texttt{v}\in \Delta_0(T)$ and $i\in\texttt{v}^*$. After that, we apply $N_i^{f}(\bs u)$ to conclude $p^f_i = 0$ for $f\in \Delta_{1}(T)$. Using this forward substitution argument for the lower triangular matrix, we conclude all coefficient polynomials $p^f_i = 0$. 

Once we have proved the map $\tr$ in~\eqref{eq:trNrf} is bijective, we conclude~\eqref{eq:divbubbledecomp} from the decomposition~\eqref{eq:Prvecdec}. 
\end{proof}

With this characterization, tangential DoFs in \eqref{eq:dofvectort} can be merged as one
$\mathbb B_r(\div, T)$.

\subsubsection{Brezzi-Douglas-Marini element}
Given an $f\in \Delta_{\ell}(T)$, we choose $\{\bs n_F, f\subseteq F\in \Delta_{n-1} (T)\}$ as the basis for its normal plane $\mathscr N^f$. 

\begin{lemma}[Local BDM element]\label{lm:localBDM}
The shape function space $ \mathbb P_r(T; \mathbb R^n)$ is uniquely determined by the DoFs
\begin{subequations}\label{eq:BDMDoFs}
 \begin{align}
\label{eq:vecbdDoF}
\int_F \bs v\cdot \bs n_F p \dd s, &\quad p\in \mathbb P_r(F), F\in \Delta_{n-1}(T),\\
\int_T \bs v \cdot \bs p \dx, &\quad \bs p\in \mathbb B_r(\div, T). \label{eq:bubbleDoF}
\end{align}
\end{subequations}
\end{lemma}
\begin{proof}

By the geometric decomposition of $\mathbb P_r(F)$ element, \eqref{eq:vecbdDoF} can be decomposed into
\begin{align}
\label{eq:vecbdDoF2}
\int_f (\bs v\cdot \bs n_F)|_F\ p \dd s, &\quad  F\in \Delta_{n-1}(T), f\in \Delta_{\ell}(F), \\
& \quad p\in \mathbb P_{r - (\ell +1)} (f), \ell = 0,1,\ldots, n-1 \notag. 
\end{align} 
We switch the ordering of $f$ and $F$ to
 \begin{align*}
\int_f (\bs v\cdot \bs n_F)|_F\ p \dd s, &\quad  f\in \Delta_{\ell}(T), F\in \Delta_{n-1}(T), f\subseteq F, \\
& \quad p\in \mathbb P_{r - (\ell +1)} (f), \ell = 0,1,\ldots, n-1, 
\end{align*}
which is exactly \eqref{eq:dofvectorn} according to our choice of normal basis. 

Therefore given a $\bs v\in  \mathbb P_r(T; \mathbb R^n)$, if~\eqref{eq:vecbdDoF} vanishes, then $\tr \bs v = 0$ and consequently $\bs v\in \mathbb B_r(\div, T)$. Finally the vanishing DoF~\eqref{eq:bubbleDoF} implies $\bs v = 0$. 
\end{proof}

One benefit of using the decomposed version \eqref{eq:vecbdDoF2} instead of the merged one \eqref{eq:vecbdDoF} is  that the well documented Lagrange basis functions can be used in the implementation of the face element; see~\cite{ChenChenHuangWei2023}.

We call the change from~\eqref{eq:vecbdDoF2} to \eqref{eq:vecbdDoF} the facewise redistribution of normal DoFs. Namely by using the face normal basis, we can redistribute the DoFs on $f$ to each $(n-1)$-dimensional face $F$. 

To glue local finite elements to form an $H(\div)$-conforming finite element, we need to enforce continuity of $\bs v\cdot \bs n_F$ by choosing $\bs n_F$ depending only on $F$ not element $T$.

\begin{lemma}[BDM space]\label{lm:BDM}
For each $F\in \Delta_{n-1}(\mathcal T_h)$, choose a normal vector $\bs n_F$. 
For the shape function space $\mathbb P_r(T; \mathbb R^n)$,
the following {\rm DoFs}
\begin{subequations}\label{eq:BDMDoFTh}
\begin{align}
\label{eq:faceDofTh} 
\int_F \bs v\cdot \bs n_F p \dd s, &\quad p\in \mathbb P_r(F), F\in \Delta_{n-1}(\mathcal T_h),\\
\label{eq:bubbleDofTh} 
\int_T \bs v \cdot \bs p \dx,&\quad \bs p\in \mathbb B_r(\div, T), T\in \mathcal T_h, 
\end{align}
\end{subequations}
define an $H(\div)$-conforming space 
$$V_{\small \rm BDM}=\{\bs v_h\in H(\div, \Omega): \bs v_h|_T\in\mathbb P_r(T; \mathbb R^n), \; \forall~T\in\mathcal T_h\}.$$
\end{lemma}
\begin{proof}
On each element $T$, DoFs~\eqref{eq:BDMDoFTh} will determine a function in $\mathbb P_r(T; \mathbb R^n)$ by Lemma~\ref{lm:localBDM}. DoF~\eqref{eq:faceDofTh} will determine the trace $\bs v\cdot \bs n_F$ on $F$ independent of the element containing $F$, and thus the function is $H(\div,\Omega)$-conforming. 
\end{proof}

We have the geometric decomposition of the global BDM element space
\begin{align*}
V_{\small \rm BDM} = &\Oplus_{F\in \Delta_{n-1}(\mathcal T_h)}\Oplus_{\ell = 0}^{n-1}\Oplus_{f\in \Delta_{\ell}(F)} \mathbb B_r\mathscr N^f(F, \Omega) \oplus \Oplus_{T\in \mathcal T_h} \mathbb B_r(\div, T),
\end{align*}
and
$$
\dim V_{\small \rm BDM} = |\Delta_{n-1}(\mathcal T_h)| { n + r -1 \choose r} + |\Delta_n(\mathcal T_h)| \sum_{\ell=1}^n{n+1 \choose \ell +1} \ell {r-1 \choose \ell}.
$$
Here
\begin{align*}
\mathbb B_r\mathscr N^f(F, \Omega):=&\{\bs v_h\in H(\div, \Omega): \bs v_h|_T\in\mathbb B_r\mathscr N^f\; \textrm{ for } T\in\mathcal T_h, F\subseteq T, \\
&\qquad\qquad\qquad\qquad\qquad\, \bs v_h|_{T'}=0 \;\textrm{ for } T'\in\mathcal T_h, F\not\subseteq T'\}.
\end{align*}

\subsubsection{Stenberg's type element}\label{sec:Stenberg}
We can construct an $H(\div)$-conforming element with more continuity on the normal plane of lower dimensional sub-simplices by choosing a global basis of $\mathscr N^f$. 

In the $t$-$n$ decomposition, if a basis vector $\bs t_i^f$ or $\bs n_i^f$ depends only on $f$ not on element $T$ containing $f$, we call it {\it global} and otherwise $\bs t_i^f(T)$ or $\bs n_i^f(T)$ is local and the corresponding DoFs are different for different $T$ containing $f$. For a global basis vector, the corresponding DoF~\eqref{eq:dofvectort} or~\eqref{eq:dofvectorn} only depends on $f$ and thus imposes continuity in that direction. In the extreme case, if all $t$-$n$ basis vectors are global, we obtain the Lagrange element. 

The following is a generalization of Stenberg element by imposing more continuity on the normal plane for sub-simplices with dimension from $0$ to $m$ for some $-1\leq m \leq n -2$. When $m=0$, it is the original Stenberg's element~\cite{stenbergNonstandardMixedFinite2010}, i.e., only continuous at vertices. When $m = n-2$, it is Christiansen-Hu-Hu element constructed in~~\cite{christiansenNodalFiniteElement2018}. When $m = -1$, no DoF~\eqref{eq:divfemThDoF1} exists and thus reduces to the BDM element. 

\begin{lemma}[Stenberg type element]\label{lm:Stenberg}
Let $-1\leq m \leq n -2$. For each $f\in \Delta_{\ell}(\mathcal T_h)$ with $\ell \leq m$, we choose $n-\ell$ normal vectors $\{\bs n^f_{1}, \ldots, \bs n^f_{n-\ell}\}$. Then the {\rm DoFs}
\begin{subequations}\label{eq:localglobalnormalbasis}
\begin{align}
\label{eq:divfemThDoF1}
\int_f \bs v\cdot \bs n^f_i\, p \dd s, &\quad  p\in \mathbb P_{r-(\ell +1)}(f), \;  f\in \Delta_{\ell}(\mathcal T_h),\\
&\quad i=1,\ldots, n-\ell, \; \ell = 0,\ldots, m, \notag\\
\label{eq:divfemThDoF2}
\int_f (\bs v\cdot \bs n_F)|_F\ p \dd s, &\quad  p\in \mathbb P_{r-(\ell +1)}(f),  F\in \Delta_{n-1}(\mathcal T_h), \; f\in \Delta_{\ell}(F),\\
&\quad  \; \ell = m+1,\ldots, n-1, \notag\\
\label{eq:divfemThDoF3}
\int_T \bs v\cdot \bs p \dx, &\quad \bs p\in \mathbb B_r(\div, T),   \;  T\in \mathcal T_h,
\end{align}
\end{subequations}
will determine a space $V_{\small \rm Stenberg}^r\subset H(\div,\Omega)$.
\end{lemma}
\begin{proof}
For $T\in\mathcal T_h$ and $f\in\Delta_{\ell}(T)$,
both $\{\bs n^f_1, \ldots, \bs n^f_{n-\ell}\}$ and $\{\bs n_F, F\in \Delta_{n-1}(\mathcal T), f\subset F\}$ are bases of the normal plane $\mathscr N^f$.
Then the number of DoFs~\eqref{eq:localglobalnormalbasis} restricted to $T$ equals to the number of DoFs~\eqref{eq:BDMDoFs}. DoF~\eqref{eq:divfemThDoF1} determines DoF~\eqref{eq:faceDofTh} for $\ell = 0,\ldots, m$. Thus we conclude the result from Lemma~\ref{lm:BDM}.
\end{proof}

We have the geometric decomposition of the global Stenberg element space
\begin{align*}
V_{\small \rm Stenberg}^r &= \Oplus_{\ell = 0}^{m}\Oplus_{f\in \Delta_{\ell}(\mathcal T_h)} \mathbb B_r\mathscr N^f(\Omega) \oplus \Oplus_{F\in \Delta_{n-1}(\mathcal T_h)}\Oplus_{\ell = m+1}^{n-1}\Oplus_{f\in \Delta_{\ell}(F)} \mathbb B_r\mathscr N^f(F, \Omega) \\
&\quad \oplus  \Oplus_{T\in \mathcal T_h} \mathbb B_r(\div, T),
\end{align*}
and
\begin{align*}
\dim V_{\small \rm Stenberg}^r& = \sum_{\ell = 0}^{m}|\Delta_{\ell}(\mathcal T_h)| (n - \ell ){ r-1 \choose \ell} + |\Delta_{n-1}(\mathcal T_h)| \sum_{\ell = m+1}^{n-1}{n \choose \ell +1} {r-1 \choose \ell} \\
&\quad + |\Delta_n(\mathcal T_h)|\sum_{\ell=1}^n {n+1 \choose \ell +1}  {r-1 \choose \ell} {\ell \choose 1}.
\end{align*}
Here
\begin{align*}
\mathbb B_r\mathscr N^f(\Omega):=&\{\bs v_h\in H(\div, \Omega): \bs v_h|_T\in\mathbb B_r\mathscr N^f\; \textrm{ for } T\in\mathcal T_h, f\subseteq T, \\
&\qquad\qquad\qquad\qquad\qquad\, \bs v_h|_{T'}=0 \;\textrm{ for } T'\in\mathcal T_h, f\not\subseteq T'\}.
\end{align*}
Clearly $V_{\small \rm Stenberg}^r\subseteq V_{\small \rm BDM}$, and $\dim V_{\small \rm Stenberg}^r<\dim V_{\small \rm BDM}$ for $0\leq m\leq n-2$.

We introduce an \( n \)-dimensional smoothness vector \(\boldsymbol r= (r_0, r_1, \cdots, r_{n-1})^{\intercal}\in\mathbb{R}^n\) to characterize the smoothness of the finite element space at sub-simplices of dimension \(\ell = 0, 1, \ldots, n-1\). For the space \(V_{\small \rm Stenberg}^r\) defined by DoFs~\eqref{eq:localglobalnormalbasis}, the smoothness vector is given by 
\begin{equation*}
\boldsymbol r= \big(0, -\frac{1}{n-1}, \ldots, -\frac{m}{n-1}, -1, \ldots, -1\big)^{\intercal}.
\end{equation*} 
For an \(\ell\)-dimensional face \(f\in\Delta_{\ell}(\mathcal{T}_h)\), a smoothness parameter of \(-\frac{\ell}{n-1}\) means the vector is discontinuous only on the \(\ell\)-dimensional tangential space and continuous on the normal plane \(\mathcal{N}^f\). The value \(-1\) indicates that the DoF is redistributed to faces, and the vector is discontinuous on the tangential space of \((n-1)\)-dimensional faces. The same smoothness vector will determine the same global finite element space, although DoFs may be slightly different; see the modification of DoFs in Corollary \ref{cor:modStenberg}.

Here are the smoothness vectors for the examples in Section~\ref{sec:divexample}: N\'ed\'elec element/BDM element: $\boldsymbol r= \bs{-1}$ with $m=-1$, Stenberg element: $\boldsymbol r= (0,-1,-1)^{\intercal}$ with $m=0$, and CHH element: $\boldsymbol r= (0,-1/2,-1)^{\intercal}$ with $m=1$. These vectors give the smoothness properties of the respective finite element spaces at vertices, edges, and faces.

\subsection{Discrete inf-sup condition}
In the unisolvence of vector Lagrange elements, cf. the proof of Lemma~\ref{lm:vecLdec}, any basis of $\mathbb R^n$ at $f$ is allowed. The $t$-$n$ basis is used for two purposes: 1. the $H(\div)$-conformity; 2.  the discrete div stability. 

If the $H(\div)$-conformity is the only concern, we can simply choose the Lagrange element. Another and more important consideration is from the div stability.
At the continuous level, we have the div stability: $\div: H(\div, \Omega)\to L^2(\Omega)$ is surjective, which has a continuous right inverse. A regular right inverse in $H^1(\Omega)$ also exists~\cite{CostabelMcIntosh2010}.  

By the Euler's formula for homogenous degree polynomials $\mathbb H_{r-1}(T)$, i.e.  $\div(\boldsymbol x q) = (r-1+n)q$ for any $q\in\mathbb H_{r-1}(T)$, clearly we have $\div\mathbb P_r(T;\mathbb R^n)  =  \mathbb P_{r-1}(T) $.
Hence the discrete div stability in one element always holds.
We discuss the global version.
Let 
\begin{align}
\notag V_h:=&\{\bs v_h\in H(\div, \Omega): \bs v_h|_T\in\mathbb P_r(T; \mathbb R^n) \textrm{ for each } T\in\mathcal T_h\}, \\
\label{eq:Qh} Q_h:=&\{q_h\in L^2(\Omega): q_h|_T\in\mathbb P_{r-1}(T) \textrm{ for each } T\in\mathcal T_h\}.
\end{align}
The discrete div stability refers to $\div V_h = Q_h$ and the operator $\div$ has a continuous right inverse. 

We will use the $L^2$-inner product $(\cdot, \cdot)_T$ and define the orthogonal complement of a subspace $V\subset L^2(T)$ as $V^{\perp}$, i.e.,
$$
V^{\perp} = \{u \in L^2(T) : (u,v)_T = 0 \quad \forall~v\in V\}. 
$$
We first give the following characterization of the range of the div operator on the bubble polynomial space.
\begin{lemma}[Lemma 3.6 in~\cite{Chen;Huang:2021divFinite}]\label{lem:divBDMbubbleonto}
It holds
$$
\div\mathbb B_r(\div, T)=\mathbb P_{r-1}(T)\cap \mathbb R^{\perp},\quad r\geq 0.
$$
\end{lemma}
\begin{proof}
When $r = 0,1$, both sides are zero. Therefore we focus on $r\geq 2$. 

The inclusion $ \div( \mathbb B_r(\div, T)) \subseteq (\mathbb P_{r-1}(T)\cap \mathbb R^{\perp})$ is proved through integration by parts
$$
(\div \boldsymbol v, p)_T = -( \boldsymbol v, \grad p)_T = 0, \quad p\in \ker(\grad) = \mathbb R.
$$

If $ \div( \mathbb B_r(\div, T)) \neq \mathbb P_{r-1}(T)\cap \mathbb R^{\perp}$, there exists a $p\in  \mathbb P_{r-1}(T)\cap \mathbb R^{\perp}$ and $p\perp \div( \mathbb B_r(\div, T))$, which is equivalent to $\grad p \perp \mathbb B_r(\div, T)$. Expand the vector $\grad p$ in the basis $\{\boldsymbol n_i, i=1,\ldots, n\}$ as $\grad p = \sum\limits_{i=1}^n q_i {\boldsymbol n}_{i}$ with $q_i\in\mathbb P_{r-2}(T)$. Then set $\boldsymbol v_p = \sum\limits_{i=1}^n|\nabla\lambda_i| q_i \lambda_{0}\lambda_i \boldsymbol t_{i,0}\in \mathbb B_r(\div, T)$, where $\boldsymbol t_{i,0}:=\texttt{v}_0-\texttt{v}_i$.
We have
$$
(\grad p, \boldsymbol v_p)_T = \sum_{i=1}^n \int_Tq_i^2 \lambda_{0}\lambda_i \dx = 0,
$$
which implies $q_i = 0$ for $i=1,\ldots, n$, i.e., $\grad p = 0$ and $p = 0$ as $p\in  \mathbb P_{r-1}(T)\cap \mathbb R^{\perp}$.
\end{proof}

Next we verify the discrete div stability.
\begin{proposition}\label{lem:BDMdiscretedivinfsup}
Let $r\geq 1$ and $V_{\small \rm BDM}$ be the BDM space defined in Lemma~\ref{lm:BDM} and $Q_h$ defined by~\eqref{eq:Qh}. It holds the discrete inf-sup condition
\begin{equation}\label{eq:BDMdiscretedivinfsup}
\|q_h\|_0\lesssim \sup_{\bs v_h\in V_{\small \rm BDM}}\frac{(\div \bs v_h, q_h)}{\|\bs v_h\|_0+\|\div \bs v_h\|_0}\quad\forall~q_h\in Q_h.
\end{equation}
\end{proposition}
\begin{proof}
By Theorem 1.1 in~\cite{CostabelMcIntosh2010},
there exists $\bs v\in H^1(\Omega;\mathbb R^n)$ such that
\begin{equation}\label{eq:20210719-1}
\div \bs v=q_h,\quad \|\bs v\|_1\lesssim \|q_h\|_0.    
\end{equation}
Let $\tilde{\bs v}_h\in V_{\small \rm BDM}$ satisfy 
\begin{align*}
&\int_F \tilde{\bs v}_h\cdot \bs n_{F}\dd s=\int_F\bs v\cdot \bs n_{F}\dd s, \quad \quad F\in \Delta_{n-1}(\mathcal T_h),
\end{align*}
and the rest DoFs vanish. 
By the scaling argument,
\begin{equation}\label{eq:20210719-2}
\|\tilde{\bs v}_h\|_0+\|\div\tilde{\bs v}_h\|_0\lesssim \|\bs v\|_1\lesssim \|q_h\|_0.
\end{equation}
Clearly we have 
$\div(\tilde{\bs v}_h-\bs v)|_T\in\mathbb P_{r-1}(T)\cap \mathbb R^{\perp}$ for each $T\in\mathcal T_h$. By Lemma~\ref{lem:divBDMbubbleonto}, there exists $\bs b_h\in L^2(\Omega;\mathbb R^n)$ such that $\bs b_h|_T\in\mathbb B_r(\div, T)$ for each $T\in\mathcal T_h$, and
\begin{equation}\label{eq:20210719-3}
\div \bs b_h=\div(\bs v-\tilde{\bs v}_h), \quad \|\bs b_h\|_{0,T}\lesssim h_T\|\div(\tilde{\bs v}_h-\bs v)\|_{0,T}.
\end{equation}
Take $\bs v_h=\tilde{\bs v}_h+\bs b_h\in V_{\small \rm BDM}$. By~\eqref{eq:20210719-1} and~\eqref{eq:20210719-3}, it holds
\begin{equation}\label{eq:20210719-4}
\div \bs v_h=\div\tilde{\bs v}_h+\div \bs b_h=\div \bs v=q_h. 
\end{equation}
It follows from~\eqref{eq:20210719-2} and~\eqref{eq:20210719-3} that
\begin{align}
\|\bs v_h\|_0+\|\div \bs v_h\|_0&=\|\bs v_h\|_0+\|q_h\|_0\leq \|\tilde{\bs v}_h\|_0+\|\bs b_h\|_0+\|q_h\|_0 \notag\\
&\lesssim \|\tilde{\bs v}_h\|_0+h\|\div\tilde{\bs v}_h\|_{0}+\|q_h\|_0\lesssim\|q_h\|_0. \label{eq:20210719-5}
\end{align}
Combining~\eqref{eq:20210719-4}-\eqref{eq:20210719-5} yields~\eqref{eq:BDMdiscretedivinfsup}.
\end{proof}

For the Stenberg-type element, the continuity at normal planes introduces some constraint and makes the discrete inf-sup condition harder. As all bubble functions are treated locally, $\div\mathbb B_r(\div, T)=\mathbb P_{r-1}(T)\cap \mathbb R^{\perp}$ still holds. We only need to show $\cup_{T\in \mathcal T_h}\mathbb P_0(T)$ is in the range of $\div V_h$, which requires the face average $\int_F \bs v\cdot \bs n_F\dd s$. 


We propose the following modification to have a better discrete inf-sup condition. 
Denote by
$$
\mathbb B_{r, m+1}(F) := \big [\oplus_{\ell = m+1}^{n-1}\oplus_{f\in \Delta_{\ell}(F)}b_f\mathbb P_{r-(\ell +1)}(f)\big ], \quad -1\leq m\leq n-2.
$$

\begin{corollary}\label{cor:modStenberg}
With the same setting in Lemma \ref{lm:Stenberg} and further assume $r\geq m+2$ for $-1\leq m\leq n-2$. Replacing DoF \eqref{eq:divfemThDoF2} by
\begin{equation}\label{eq:divfemThDoF2new}
\int_F \bs v\cdot \bs n_F\ q\dd s, \quad F\in \Delta_{n-1}(\mathcal T_h),
\, q\in (\mathbb B_{r, m+1}(F)\cap \mathbb P_{0}^{\perp}(F))\oplus \mathbb P_{0}(F),
\end{equation}
will define the same finite element space.
\end{corollary}
\begin{proof}
When $-1\leq m\leq n - 2$ and $r\geq m+2$, $\dim \mathbb B_{r, m+1}(F)\geq |\Delta_{m+1}(F)| \geq \dim \mathbb P_0(F)$. Therefore the number of DoFs remains the same as DoF \eqref{eq:divfemThDoF2}.

Vanishing DoFs \eqref{eq:divfemThDoF1} will imply $\boldsymbol{v}\cdot \bs n_F|_{F}\in \mathbb B_{r, m+1}(F)$, which can be determined by \eqref{eq:divfemThDoF2new}. So the unisolvence follows.
 
As the change will not affect the continuity, it will define the same finite element space.  
\end{proof}
%
Now $\int_F \bs v\cdot \bs n_F\dd s$ is in DoF \eqref{eq:divfemThDoF2new}. The proof of the following result is identical to that of Proposition~\ref{lem:BDMdiscretedivinfsup}. 

\begin{proposition}
Let $-1\leq m \leq n -2$ and $r\geq m+2$. Let $V_{m}^r$ be the $H(\div)$-conforming finite element defined in Lemma~\ref{lm:Stenberg}. The following discrete inf-sup condition holds with a constant independent of $h$
\begin{equation}\label{eq:discretedivinfsup}
\|q_h\|_0\lesssim \sup_{\bs v_h\in V_{m}^r}\frac{(\div \bs v_h, q_h)}{\|\bs v_h\|_0+\|\div \bs v_h\|_0}\quad\forall~q_h\in Q_h.
\end{equation}
\end{proposition}

\section{Geometric Decompositions of Matrix Face Elements}\label{sec:geodecompdivmatrix}
In this section, we generalize the geometric decomposition of $H(\div)$-conforming vector finite elements to two $H(\div)$-conforming matrix finite elements: the traceless matrix $\mathbb T$ and the symmetric matrix $\mathbb S$. 

\subsection{Traceless matrix elements}
We consider the $H(\div,\Omega;\mathbb T)$-conforming finite element spaces, where $\mathbb T\in \mathbb R^{n\times n}$ is the set of square matrices with vanishing trace, i.e., the sum of the diagonal is zero. 

We start from the tensor product of the Lagrange element and $\mathbb T$:
\begin{align*}
\mathbb P_r(T; \mathbb T)  &= \Oplus_{\ell = 0}^n\Oplus_{f\in \Delta_{\ell}(T)} \left [b_f \mathbb P_{r - (\ell +1)} (f) \otimes \mathbb T \right ].
\end{align*}
That is each component of the matrix function is a Lagrange element of degree $r$ and thus is continuous. To be $H(\div)$-conforming, however, normal continuity is sufficient.

To impose the normal continuity of a traceless matrix function, the key is a $t$-$n$ decomposition at each sub-simplex. Here the $t$-$n$ decomposition is with respect to the second component in the tensor product form $\bs u\otimes \bs v$ of representing a matrix. Given a sub-simplex $f\in \Delta_{\ell}(T)$, choose a $t$-$n$ basis $\{\bs t_i^f, \bs n^f_j\}_{i=1,\ldots,\ell}^{j = 1,\ldots,n-\ell}$ and decompose $\mathbb R^n = \mathscr T^f\oplus^{\perp} \mathscr N^f$. All basis vectors are normalized but may not be mutually orthogonal. By the tensor product the $n\times n$ matrix space $\mathbb M$ has the following decomposition
\begin{equation}\label{eq:MdecT}
\mathbb M = (\mathbb R^n \otimes \mathscr T^f )\oplus (\mathbb R^n \otimes \mathscr N^f). 
\end{equation}
For a matrix $\bs A\in \mathbb M$, $\tr_F^{\div}(\bs A)=\bs A\bs n_F$ and thus $\tr_F^{\div}(\mathbb R^n \otimes \mathscr T^f ) = 0$. The normal component $\mathbb R^n \otimes \mathscr N^f$ will contribute to the normal trace. 

We then modify \eqref{eq:MdecT} for $\mathbb M$ to impose the traceless constraint while not changing $\tr^{\div}$. When computing the trace of a matrix, we use ${\rm trace}( \bs u\otimes \bs v) =\bs v\cdot \bs u$. We pick up the element $\bs t_1^f\otimes \bs t_1^f\in \mathbb R^n \otimes \mathscr T^f$ and use it to modify the basis in $(\mathbb R^n \otimes \mathscr T^f )\oplus (\mathbb R^n \otimes \mathscr N^f)$ to get the following $t$-$n$ decomposition on $f\in \Delta_{\ell}(T)$ for $\ell \geq 1$: 
\begin{align*}
\mathscr T^f(\mathbb T) &:= \textrm{span}\big\{\bs n_j^f\otimes\bs t_i^f,  1\leq i\leq\ell,1\leq j\leq n-\ell\big\} \\
&\quad\;\oplus\textrm{span}\big\{\bs t_i^f\otimes \bs t_j^f - (\bs t_i^f\cdot \bs t_j^f) \bs t_1^f\otimes \bs t_1^f,  1\leq i, j\leq \ell\big\},\\
\mathscr N^f(\mathbb T) &:= \textrm{span}\big\{\bs t_i^f\otimes \bs n_j^f,  1\leq i\leq\ell,1\leq j\leq n-\ell\big\} \\
&\quad\;\oplus\textrm{span}\big\{\bs n_i^f\otimes \bs n_j^f - (\bs n_i^f\cdot \bs n_j^f) \bs t_1^f\otimes \bs t_1^f,  1\leq i, j\leq n-\ell\big\}.
\end{align*}
By counting the dimensions, it is easy to show the direct decomposition
%
\begin{equation*}
 \mathbb T = \mathscr T^f(\mathbb T) \oplus \mathscr N^f(\mathbb T), \quad \quad f\in \Delta_{\ell}(T), \ell= 1,\ldots, n.
\end{equation*}
For $\ell = 0$, i.e., at vertex $\texttt{v}\in \Delta_0(T)$, we understand $ \mathscr T^\texttt{v}(\mathbb T) = \{ 0\}$ and $\mathscr N^\texttt{v}(\mathbb T) = \mathbb T$. For $\ell = n$, $ \mathscr T^T(\mathbb T) = \mathbb T$ and $\mathscr N^T(\mathbb T) = \{ 0\}$.
Coupled with the bubble polynomials, we define
\begin{align*}
\mathbb B_r\mathscr T^f(\mathbb T) := b_f\mathbb P_{r-(\ell +1)}(f)\otimes \mathscr T^f(\mathbb T),\quad
\mathbb B_r\mathscr N^f(\mathbb T) := b_f\mathbb P_{r-(\ell +1)}(f)\otimes \mathscr N^f(\mathbb T).
\end{align*}

\begin{lemma} \label{lm:TLdec}
The shape function space $ \mathbb P_r(T; \mathbb T)$ has a geometric decomposition
\begin{equation*}
 \mathbb P_r(T; \mathbb T)  = \Oplus_{\ell = 0}^n\Oplus_{f\in \Delta_{\ell}(T)} \left [\mathbb B_r\mathscr T^f(\mathbb T)  \oplus \mathbb B_r\mathscr N^f(\mathbb T) \right ].
\end{equation*}
A function $\boldsymbol{A}\in \mathbb P_r(T; \mathbb T)$ is uniquely determined by the DoFs
\begin{subequations}\label{eq:dofT}
\begin{align}
\label{eq:dofT1}
\int_f\boldsymbol{A}:\boldsymbol{q}\dd s,
&\quad\quad ~\boldsymbol{q}\in \mathbb P_{r-(\ell +1)}(f)\otimes\mathscr T^f(\mathbb T), f\in \Delta_{\ell}(T), \ell = 1,\ldots, n,\\
\label{eq:dofT2}
\int_f\boldsymbol{A}:\boldsymbol{q}\dd s,
&\quad\quad ~\boldsymbol{q}\in \mathbb P_{r-(\ell +1)}(f)\otimes\mathscr N^f(\mathbb T), f\in \Delta_{\ell}(T), \ell = 0,\ldots, n-1.
\end{align}
\end{subequations}
\end{lemma}
\begin{proof}
Since $\mathbb T =  \mathscr T^f(\mathbb T) \oplus \mathscr N^f(\mathbb T)$, DoFs~\eqref{eq:dofT1}-\eqref{eq:dofT2} are equivalent to 
\begin{equation*}
\int_f\boldsymbol{A}:\boldsymbol{q}\dd s, \quad \quad~\boldsymbol{q}\in \mathbb P_{r - (\ell +1)} (f)\otimes \mathbb T, f\in \Delta_{\ell}(T), \ell = 0,\ldots, n.
\end{equation*}
Then the unisolvence follows from the unisolvence of the Lagrange element.
\end{proof}

Define the bubble polynomial space
\[
\mathbb B_r(\div,T; \mathbb T):= \mathbb P_r(T; \mathbb T)\cap \ker({\rm tr}^{\div}).
\]
%
Follow the same proof of Lemma~\ref{lem:divbubbletracespacedecomp}, we have the characterization of the bubble space. 
\begin{lemma}
For $r\geq 2$, it holds that
\begin{equation*}
\mathbb B_r(\div,T; \mathbb T)=\Oplus_{\ell = 1}^n\Oplus_{f\in \Delta_{\ell}(T)} \mathbb B_r\mathscr T^f(\mathbb T),
\end{equation*}
and
\begin{equation*}
{\rm tr}^{\div}: \Oplus_{\ell = 0}^{n-1}\Oplus_{f\in \Delta_{\ell}(T)} \mathbb B_r\mathscr N^f(\mathbb T) \to {\rm tr}^{\div} \, \mathbb P_r(T; \mathbb T)
\end{equation*}
is a bijection.
\end{lemma}

Similar to the generalized Stenberg element, we can redistribute some normal DoFs onto the $(n-1)$-dimensional faces to obtain the following $H(\div;\mathbb T)$ element.
\begin{theorem}[$H(\div;\mathbb T)$-conforming finite elements]\label{thm:femTtensor}
Let $0\leq m\leq n -2$. For each $f\in \Delta_{\ell}(\mathcal T_h)$ with $\ell \leq m$, we choose $n-\ell$ normal vectors $\{\bs n^f_{1}, \ldots, \bs n^f_{n-\ell}\}$. For each $F\in \Delta_{n-1}(\mathcal T_h)$, choose a normal vector $\bs n_F$. Then the {\rm DoFs}
\begin{subequations}\label{eq:divfemTkdof}
\begin{align}
\label{eq:divfemTkdof1}
\boldsymbol{A}(\texttt{v}),
&\quad \texttt{v}\in \Delta_{0}(\mathcal{T}_h), \bs A\in \mathbb T, \\
\label{eq:divfemTkdof2}
\int_f(\boldsymbol{A}\bs n^f_i)\cdot\boldsymbol{q}\dd s,
&\quad f\in \Delta_{\ell}(\mathcal{T}_h), \boldsymbol{q}\in \mathbb P_{r-(\ell +1)}(f;\mathbb R^n), \\
&\quad i=1,\ldots, n-\ell, \; \ell = 1,\ldots, m, \notag\\
\label{eq:divfemTkdof3}
\int_f (\boldsymbol{A}\bs n_F)|_F\cdot\boldsymbol{q}\dd s, &\quad F\in \Delta_{n-1}(\mathcal T_h), \; f\in \Delta_{\ell}(F),\\
&\quad \boldsymbol{q}\in \mathbb P_{r-(\ell +1)}(f;\mathbb R^n),   \; \ell = m+1,\ldots, n-1, \notag\\
\label{eq:divfemTkdof4}
\int_T \boldsymbol{A}:\boldsymbol{q} \dx, &\quad   T\in \mathcal T_h, \; \bs q\in \mathbb B_r(\div, T;\mathbb T),
\end{align}
\end{subequations}
will determine a space $V^r(\mathbb T)\subset H(\div,\Omega;\mathbb T)$, where
\begin{align*}
V^r(\mathbb T):=\{&\boldsymbol{A}\in L^2(\Omega;\mathbb T): \boldsymbol{A}|_T\in\mathbb P_r(T; \mathbb T) \quad\forall~T\in\mathcal T_h, \\
&\textrm{DoFs \eqref{eq:divfemTkdof1}-\eqref{eq:divfemTkdof2} are single-valued across $f\in \Delta_{\ell}(\mathcal{T}_h)$ for $\ell=0,\ldots, m$}, \\
&\textrm{DoF \eqref{eq:divfemTkdof3} is single-valued across $F\in \Delta_{n-1}(\mathcal{T}_h)$} \}.
\end{align*}
\end{theorem}
\begin{proof}
For $T\in\mathcal T_h$ and $f\in\Delta_{\ell}(T)$,
both $\{\bs n^f_1, \ldots, \bs n^f_{n-\ell}\}$ and $\{\bs n_F, F\in \Delta_{n-1}(\mathcal T)$, $f\subset F\}$ are basis of the normal plane $\mathscr N^f$.
Then DoFs~\eqref{eq:divfemTkdof} restricted to $T$ are equivalent to DoFs~\eqref{eq:dofT}, thus uniquely determine $\mathbb P_r(T; \mathbb T)$. 

For $F\in\Delta_{n-1}(\mathcal T_h)$,
DoFs~\eqref{eq:divfemTkdof1}-\eqref{eq:divfemTkdof2} restricted to $F$ will determine
\begin{align*}
\int_f (\boldsymbol{A}\bs n_F)|_F\cdot\boldsymbol{q}\dd s, &\quad f\in \Delta_{\ell}(F), \boldsymbol{q}\in \mathbb P_{r-(\ell +1)}(f;\mathbb R^n),   \; \ell = 0,\ldots, m.
\end{align*}
Then by the unisolvence of Lagrange element in Theorem~\ref{thm:Lagrangedec}, $(\boldsymbol{A}\bs n_F)|_F$ is determined by DoFs~\eqref{eq:divfemTkdof1}-\eqref{eq:divfemTkdof3} restricted to $F$. Therefore $V^r(\mathbb T)\subset H(\div,\Omega;\mathbb T)$.
\end{proof}

To have a better discrete div stability, we modify the face DoFs.

\begin{corollary}
With the same setting in Theorem \ref{thm:femTtensor} and further assume $r\geq m+2$ for $0\leq m\leq n-3$, and $r\geq n+1$ for $m=n-2$. Replacing DoF \eqref{eq:divfemTkdof3} by
\begin{equation}\label{eq:divfemTkdof3new}
\int_F (\boldsymbol{A}\bs n_F)\cdot\boldsymbol{q}\dd s, \quad F\in \Delta_{n-1}(\mathcal T_h),
\, \boldsymbol{q}\in \big[(\mathbb B_{r, m+1}(F)\cap \mathbb P_{1}^{\perp}(F))\oplus \mathbb P_{1}(F)\big]\otimes\mathbb R^n,
\end{equation}
will define the same finite element space.
\end{corollary}
\begin{proof}
When $m = n-2$, we require $r\geq n+1$ so that $r - (m+1+1)\geq 1$ and $ \dim \mathbb B_{r, m+1}(F)\geq \dim \mathbb P_1(F)$. When $0\leq m\leq n - 3$ and $r\geq m+2$, $\dim \mathbb B_{r, m+1}(F)\geq |\Delta_{m+1}(F)| \geq \dim \mathbb P_1(F)$. Therefore the number of DoFs remains the same as DoF \eqref{eq:divfemTkdof3}.

Vanishing DoFs \eqref{eq:divfemTkdof1}-\eqref{eq:divfemTkdof2} will imply $\boldsymbol{A}\bs n_F|_{F}\in \mathbb B_{r, m+1}(F)\otimes \mathbb R^n$, which can be determined by \eqref{eq:divfemTkdof3new}. So the unisolvence follows.
 
As the change will not affect the continuity, it will define the same finite element space. 
\end{proof}

DoF \eqref{eq:divfemTkdof3new} is more friendly for verifying the discrete inf-sup condition and \eqref{eq:divfemTkdof3} is better for the uni-solvence and implementation.

When $m=0$, it is the generalization of Stenberg element for vector functions to traceless tensor functions. Almost all DoFs are redistributed to face $F$ except at the vertex, where the traceless constraint is imposed. The case $n=3,r\geq 2$ is the $H(\div;\mathbb T)$ element constructed in~\cite{Chen;Huang:2020Discrete}.


\begin{remark}\rm Comparing with the vector face element, cf. Lemma~\ref{lm:Stenberg}, $m$ starts from $0$ not $-1$.
Namely the $H(\div;\mathbb T)$-element should be continuous at vertices. We argue that the continuity at vertices is also necessary. 
Take a vertex in $\Delta_{0}(T)$, for example $\texttt{v}_0$. Then $\bs A\bs n_{F_i}(\texttt{v}_0)$ is determined by the vector $\bs A\bs n_{F_i}|_{F_{i}}\in \mathbb R^n$ for $i=1,\ldots, n$. If it is continuous on each face but not on vertices, the number of elements in $\bs A\bs n_{F_i}(\texttt{v}_0)$ is $n$ for each face. Running $i$ from $1$ to $n$, $\bs A(\texttt{v}_0)$ is determined by $n^2$ conditions, which is more than $\dim \mathbb T = n^2-1$.
\end{remark}

\subsection{Discrete div stability for traceless tensors}
We first show the div stability of the bubble space $\mathbb B_r(\div,T; \mathbb T)$.
Denote by $\bs t_{i,j}$ the edge vector from $\texttt{v}_i$ to $\texttt{v}_j$. By computing the constant directional derivative $\boldsymbol t_{i,j}\cdot\nabla \lambda_{\ell}$ by values on the two vertices, we have
\begin{equation*}
\boldsymbol t_{i,j}\cdot\nabla \lambda_{\ell}=\delta_{j\ell}-\delta_{i\ell}=\begin{cases}
1, & \textrm{ if } \ell=j,\\
-1, & \textrm{ if } \ell=i,\\
0, & \textrm{ if } \ell\neq i,j.
\end{cases}  
\end{equation*}

\begin{lemma}\label{lm:Tbasis}
The set of traceless tensors $\{\nabla\lambda_i\otimes\boldsymbol{t}_{i+1,j}\}_{j\in\{0,\ldots,n\}\backslash\{i,i+1\}}^{i=0,\ldots,n}$ is a basis of the traceless tensor space $\mathbb T$. Its dual basis is $\{\boldsymbol{t}_{j,i}\otimes\nabla\lambda_j+\frac{1}{n}I\}_{j\in\{0,\ldots,n\}\backslash\{i,i+1\}}^{i=0,\ldots,n}$. All indices are modulo $n$.
\end{lemma}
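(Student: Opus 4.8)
The statement asserts two things: first that the $n^2-1$ tensors $\nabla\lambda_i\otimes\boldsymbol t_{i+1,j}$ (with $i$ ranging over $0,\dots,n$ and $j$ over $\{0,\dots,n\}\setminus\{i,i+1\}$, indices mod $n+1$) form a basis of $\mathbb T$, and second that the indicated tensors $\boldsymbol t_{j,i}\otimes\nabla\lambda_j + \tfrac1n I$ form the dual basis with respect to the Frobenius inner product $\boldsymbol A:\boldsymbol B = \tr(\boldsymbol A^{\intercal}\boldsymbol B)$. The cleanest route is to prove the duality relation directly; once the pairing matrix between the two families is shown to be the identity, both families are automatically linearly independent, and since each family has exactly $(n+1)(n-1) = n^2-1 = \dim\mathbb T$ elements, each is a basis. (One should first check each $\nabla\lambda_i\otimes\boldsymbol t_{i+1,j}$ is genuinely traceless: $\tr(\nabla\lambda_i\otimes\boldsymbol t_{i+1,j}) = \boldsymbol t_{i+1,j}\cdot\nabla\lambda_i = \delta_{ij}-\delta_{i,i+1} = 0$ since $j\neq i$ and $i\neq i+1$ mod $n+1$, using the directional-derivative identity displayed just above the lemma.)

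\textbf{Key computation.} For indices $(i,j)$ and $(i',j')$ in the index set, I would compute
\[
\bigl(\nabla\lambda_i\otimes\boldsymbol t_{i+1,j}\bigr) : \bigl(\boldsymbol t_{j',i'}\otimes\nabla\lambda_{j'} + \tfrac1n I\bigr).
\]
Using $(\boldsymbol a\otimes\boldsymbol b):(\boldsymbol c\otimes\boldsymbol d) = (\boldsymbol a\cdot\boldsymbol c)(\boldsymbol b\cdot\boldsymbol d)$ and $(\boldsymbol a\otimes\boldsymbol b):I = \boldsymbol a\cdot\boldsymbol b$, the first term is $(\nabla\lambda_i\cdot\boldsymbol t_{j',i'})(\boldsymbol t_{i+1,j}\cdot\nabla\lambda_{j'})$ and the second is $\tfrac1n(\nabla\lambda_i\cdot\boldsymbol t_{i+1,j})$. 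The directional-derivative identity $\boldsymbol t_{p,q}\cdot\nabla\lambda_\ell = \delta_{q\ell}-\delta_{p\ell}$ turns everything into Kronecker deltas: $\boldsymbol t_{i+1,j}\cdot\nabla\lambda_{j'} = \delta_{jj'}-\delta_{i+1,j'}$, $\nabla\lambda_i\cdot\boldsymbol t_{j',i'} = \delta_{i'i}-\delta_{j'i}$, and $\nabla\lambda_i\cdot\boldsymbol t_{i+1,j} = \delta_{ij}-\delta_{i,i+1} = 0$. So the $\tfrac1n I$ contribution vanishes identically (this is exactly why it is there — it does not disturb the pairing but it fixes the trace, since $\tr(\boldsymbol t_{j,i}\otimes\nabla\lambda_j) = \delta_{ij}-\delta_{jj} = -1$, so adding $\tfrac1n I$ restores tracelessness). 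What remains is $(\delta_{i'i}-\delta_{j'i})(\delta_{jj'}-\delta_{i+1,j'})$. I would then check, using the constraints $j'\neq i'$, $j'\neq i'+1$ (and $j\neq i$, $j\neq i+1$), that this product equals $\delta_{ii'}\delta_{jj'}$: when $(i,j)=(i',j')$ it gives $(1-\delta_{ij})(1-\delta_{i+1,j}) = 1$; and the off-diagonal cases force one of the factors to zero after a short case analysis on whether $i=i'$ or $j'\in\{i,i+1\}$, each excluded by the admissibility constraints on the index pair.

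\textbf{Main obstacle.} The only genuinely delicate point is the bookkeeping with indices modulo $n+1$ — in particular verifying that the cross terms $\delta_{j'i}$ and $\delta_{i+1,j'}$ cannot both be "wrong" simultaneously, i.e.\ that the admissibility conditions on $(i',j')$ really do kill all the unwanted terms. I would handle this by splitting into the cases $i'=i$ (then $j'\neq j$ forces $\delta_{jj'}-\delta_{i+1,j'}=-\delta_{i+1,j'}$, and $j'=i+1$ would contradict $j'\neq i'+1=i+1$, so this is $0$) and $i'\neq i$ (then $\delta_{i'i}-\delta_{j'i} = -\delta_{j'i}$, and $j'=i$ with $j'\neq i'$... here I must instead look at the other factor: if $j'=i$ then $\delta_{jj'}-\delta_{i+1,j'} = \delta_{ji}-\delta_{i+1,i} = 0$ since $j\neq i$). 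Once this case check closes, the pairing matrix is $I$, hence both families are linearly independent, hence — by the dimension count $\dim\mathbb T = n^2-1 = (n+1)(n-1)$ — bases, and they are mutually dual by construction. I would present this dimension count explicitly, noting the index set $\{(i,j): 0\le i\le n,\ j\in\{0,\dots,n\}\setminus\{i,i+1\}\}$ has $(n+1)(n-1)$ elements.
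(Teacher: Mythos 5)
Your proposal is correct and follows essentially the same route as the paper: verify that the Frobenius pairing between the two families is the identity matrix via the identity $\boldsymbol t_{p,q}\cdot\nabla\lambda_{\ell}=\delta_{q\ell}-\delta_{p\ell}$ and a short case analysis, then conclude by the dimension count $(n+1)(n-1)=n^2-1=\dim\mathbb T$. You are in fact slightly more explicit than the paper in checking that the $\tfrac1n I$ term pairs to zero with each traceless tensor $\nabla\lambda_i\otimes\boldsymbol t_{i+1,j}$ (and that it restores tracelessness of the dual elements), a step the paper's proof silently omits.
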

\begin{proof}
It suffices to prove 
\[
(\boldsymbol{t}_{\ell,k}\otimes\nabla\lambda_{\ell}):(\nabla\lambda_i\otimes\boldsymbol{t}_{i+1,j})=\delta_{ik}\delta_{j\ell}
\]
for  $0\leq i,k\leq n$, $j\in\{0,\ldots,n\}\backslash\{i,i+1\}$, and $\ell\in\{0,\ldots,n\}\backslash\{k,k+1\}$.

When $i=k$, by $\ell\neq i,i+1$, it follows
\[
(\boldsymbol{t}_{\ell,k}\otimes\nabla\lambda_{\ell}):(\nabla\lambda_i\otimes\boldsymbol{t}_{i+1,j})=\nabla\lambda_{\ell}\cdot\boldsymbol{t}_{i+1,j}=\delta_{j\ell}.
\]
When $i=\ell$, by $i\neq j$, it follows
\[
(\boldsymbol{t}_{\ell,k}\otimes\nabla\lambda_{\ell}):(\nabla\lambda_i\otimes\boldsymbol{t}_{i+1,j})=0.
\]
When $i\neq k,\ell$, clearly $(\boldsymbol{t}_{\ell,k}\otimes\nabla\lambda_{\ell}):(\nabla\lambda_i\otimes\boldsymbol{t}_{i+1,j})=0$.
\end{proof}

Let ${\rm RT}= \{a\boldsymbol x + \boldsymbol b: a\in \mathbb R, \boldsymbol b \in \mathbb R^n\}$. For a matrix $\bs A$, define $\dev \bs A = \bs A - \frac{1}{n}{\rm trace}(\bs A) I$ as the projection of $\bs A$ to $\mathbb T$. It is straight forward to verify $\ker(\dev\grad) = {\rm RT}$. Again let ${\rm RT}^{\perp}$ be the $L^2$-orthogonal complement in $L^2(T; \mathbb R^n)$.

\begin{lemma}\label{lm:TdivBrsurjection}
For each $T\in\mathcal T_h$,
it holds
\begin{equation}\label{eq:TdivBrsurjection}
\div \mathbb B_r(\div, T; \mathbb T) = \mathbb P_{r-1}(T;\mathbb R^n)\cap{\rm RT}^{\perp}.
\end{equation}
\end{lemma}
\begin{proof}
It follows from the integration by parts that
 \[
 \div \mathbb B_r(\div, T; \mathbb T) \subseteq (\mathbb P_{r-1}(T;\mathbb R^n)\cap{\rm RT}^{\perp}).
 \]

We claim the equality holds. If $ \div \mathbb B_r(\div, T; \mathbb T) \subset (\mathbb P_{r-1}(T;\mathbb R^n)\cap{\rm RT}^{\perp})$, then there exists $\boldsymbol{u}\in \mathbb P_{r-1}(T;\mathbb R^n)\cap{\rm RT}^{\perp}$ satisfying the orthogonality condition $(\boldsymbol{u}, \div\boldsymbol{A})_T=0$ for any $\boldsymbol{A}\in\mathbb B_r(\div, T; \mathbb T)$. Equivalently 
\[
(\dev\grad\boldsymbol{u}, \boldsymbol{A})_T=0\quad\forall~\boldsymbol{A}\in\mathbb B_r(\div, T; \mathbb T).
\]
By expressing $\dev\grad\boldsymbol{u}=\sum\limits_{i=0}^{n}\sum\limits_{j\in\{0,\ldots,n\}\backslash\{i,i+1\}}q_{ij}(\boldsymbol{t}_{j,i}\otimes\nabla\lambda_j+\frac{1}{n}I)$ with $q_{ij}\in \mathbb P_{r-2}(T)$, we choose 
\[
\boldsymbol{A}
=\sum_{i=0}^{n}\sum_{j\in\{0,\ldots,n\}\backslash\{i,i+1\}}\lambda_{i+1}\lambda_{j}q_{ij}\nabla\lambda_i\otimes\boldsymbol{t}_{i+1,j}\in\mathbb B_r(\div, T; \mathbb T).
\]
Then we have
\[
\sum_{i=0}^{n}\sum_{j\in\{0,\ldots,n\}\backslash\{i,i+1\}}(\lambda_{i+1}\lambda_{j}q_{ij}, q_{ij})_T=0.
\]
Therefore $q_{ij}=0$ for all $i$ and $j$. Thus $\boldsymbol{u}=0$.
\end{proof}

We mention that characterization \eqref{eq:TdivBrsurjection} in three dimensions is firstly proved in~\cite{HuLiang2021}.

\begin{proposition}[Discrete inf-sup condition for $H(\div;\mathbb T)$-conforming finite elements]\label{pro:Tdiscretedivinfsup}
Let $0\leq m\leq n -2$. Let $r\geq m+2$ for $0\leq m\leq n-3$, and $r\geq n+1$ for $m=n-2$.
Let $V^r(\mathbb T)$ be defined in Theorem~\ref{thm:femTtensor}. It holds the discrete inf-sup condition
\begin{equation}\label{eq:Tdiscretedivinfsup}
\|\boldsymbol{v}_h\|_0\lesssim \sup_{\boldsymbol{A}_h\in V^r(\mathbb T)}\frac{(\div\boldsymbol{A}_h, \boldsymbol{v}_h)}{\|\boldsymbol{A}_h\|_0+\|\div\boldsymbol{A}_h\|_0}\quad\forall~\boldsymbol{v}_h\in Q_h,
\end{equation}
where $Q_h:=\{\boldsymbol{v}_h\in L^2(\Omega;\mathbb R^n): \boldsymbol{v}_h|_T\in\mathbb P_{r-1}(T;\mathbb R^n) \textrm{ for each } T\in\mathcal T_h\}.$
\end{proposition}
\begin{proof}
First
there exists $\boldsymbol{A}\in H^1(\Omega;\mathbb T)$ such that
\begin{equation}\label{eq:20221009-1}
\div\boldsymbol{A}=\boldsymbol{v}_h,\quad \|\boldsymbol{A}\|_1\lesssim \|\boldsymbol{v}_h\|_0.    
\end{equation}
Thanks to DoFs \eqref{eq:divfemTkdof} and \eqref{eq:divfemTkdof3new},
we let $\widetilde{\boldsymbol{A}}_h\in V^r(\mathbb T)$ satisfy 
\begin{align*}
\int_F (\widetilde{\boldsymbol{A}}_h\bs n_{F})\cdot\boldsymbol{q} \dd s&=\int_F(\boldsymbol{A}\bs n_{F})\cdot\boldsymbol{q}\dd s, \quad \quad~\boldsymbol{q}\in \mathbb P_{1}(F;\mathbb R^n), F\in \Delta_{n-1}(\mathcal T_h), 
\end{align*}
and other DoFs vanish. By the scaling argument,
\begin{equation}\label{eq:20221009-2}
\|\widetilde{\boldsymbol{A}}_h\|_0+\|\div\widetilde{\boldsymbol{A}}_h\|_0\lesssim \|\boldsymbol{A}\|_1\lesssim \|\boldsymbol{v}_h\|_0.
\end{equation}
Then through integration by parts we have 
$\div(\widetilde{\boldsymbol{A}}_h-\boldsymbol{A})|_T\in\mathbb P_{r-1}(T;\mathbb R^n)\cap{\rm RT}^{\perp}$ for each $T\in\mathcal T_h$. By Lemma~\ref{lm:TdivBrsurjection}, there exists $\bs b_h\in L^2(\Omega;\mathbb T)$ such that $\bs b_h|_T\in\mathbb B_r(\div, T; \mathbb T)$ for each $T\in\mathcal T_h$, and
\begin{equation}\label{eq:20221009-3}
\div \bs b_h=\div(\boldsymbol{A}-\widetilde{\boldsymbol{A}}_h), \quad \|\bs b_h\|_{0,T}\lesssim h_T\|\div(\widetilde{\boldsymbol{A}}_h-\boldsymbol{A})\|_{0,T}.
\end{equation}
Take $\boldsymbol{A}_h=\widetilde{\boldsymbol{A}}_h+\bs b_h\in V^r(\mathbb T)$. By~\eqref{eq:20221009-1} and~\eqref{eq:20221009-3}, it holds
\begin{equation}\label{eq:20221009-4}
\div \boldsymbol{A}_h=\div\widetilde{\boldsymbol{A}}_h+\div \bs b_h=\div\boldsymbol{A}=\boldsymbol{v}_h. 
\end{equation}
It follows from~\eqref{eq:20221009-2} and~\eqref{eq:20221009-3} that
\begin{align}
\|\boldsymbol{A}_h\|_0+\|\div\boldsymbol{A}_h\|_0&=\|\boldsymbol{A}_h\|_0+\|\boldsymbol{v}_h\|_0\leq \|\widetilde{\boldsymbol{A}}_h\|_0+\|\bs b_h\|_0+\|\boldsymbol{v}_h\|_0 \notag\\
&\lesssim \|\widetilde{\boldsymbol{A}}_h\|_0+h\|\div\widetilde{\boldsymbol{A}}_h\|_{0}+\|\boldsymbol{v}_h\|_0\lesssim\|\boldsymbol{v}_h\|_0. \label{eq:20221009-5}
\end{align}
Combining~\eqref{eq:20221009-4}-\eqref{eq:20221009-5} yields~\eqref{eq:Tdiscretedivinfsup}.
\end{proof}

\subsection{Symmetric matrix elements}\label{sec:HdivS3D}
We start from the tensor product of the Lagrange element and the symmetric matrix $\mathbb S$:
\begin{align}
\label{eq:SLagrange}
\mathbb P_r(T; \mathbb S)  &= \Oplus_{\ell = 0}^n\Oplus_{f\in \Delta_{\ell}(T)} \left [b_f \mathbb P_{r - (\ell +1)} (f) \otimes \mathbb S \right ].
\end{align}
The construction process is similar to the traceless case in which $t$-$n$ decompositions of $\mathbb S$ at sub-simplices are the key. Additional complication arises as the $n(n-1)/2$ symmetry constraints are more complicated than only $1$ traceless constraint. 

Let $\{\bs v_1, \ldots, \bs v_n\}$ be a basis of $\mathbb R^n$. Then $\mathbb M = \{ \sum_{i,j=1}^na_{ij}\bs v_i\otimes \bs v_j, a_{ij}\in\mathbb R\}$ is the space of $n\times n$-matrices. An element $\boldsymbol{A}\in \mathbb M$ can be identified with the coefficient matrix $(a_{ij})$ and will be still denoted by $\boldsymbol{A}$. For a better explanation, we illustrate an $n\times n$-matrix by $n\times n$ blocks; see Fig.~\ref{fig:TNdecompositionS} for the case $n=3$. Block $(i,j)$ corresponds to the basis function $ \bs v_i\otimes \bs v_j$. We will identify $n(n+1)/2$ blocks and modify corresponding basis function to form a basis of $\mathbb S$ with consideration of the normal continuity. 


\begin{figure}[htbp]
\subfigure[$t$-$n$ decomposition at a vertex. No tangent component.]{
\begin{minipage}[t]{0.23\linewidth}
\centering
\includegraphics*[width=3cm]{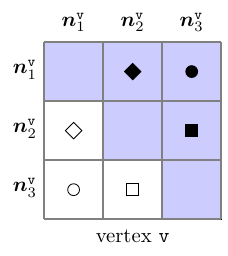}
\end{minipage}}
\;
\subfigure[$t$-$n$ decomposition on an edge.]
{\begin{minipage}[t]{0.23\linewidth}
\centering
\includegraphics*[width=3cm]{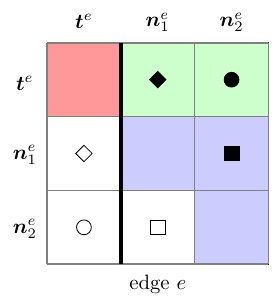}
\end{minipage}}
\;
\subfigure[$t$-$n$ decomposition on a face.]
{\begin{minipage}[t]{0.23\linewidth}
\centering
\includegraphics*[width=3cm]{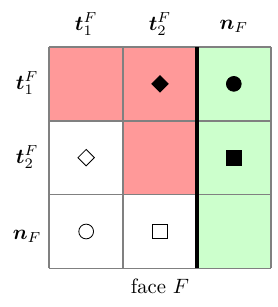}
\end{minipage}}
\;
\subfigure[$t$-$n$ decomposition of a tetrahedron. No normal component.]
{\begin{minipage}[t]{0.23\linewidth}
\centering
\includegraphics*[width=3cm]{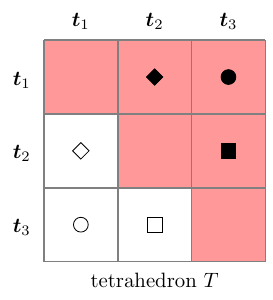}
\end{minipage}}
\caption{Several $t$-$n$ decompositions for $\mathbb S$ in $\mathbb R^3$. Blocks with the same symbol (circle, square, or diamond) are in the same constraint sequence and the white block is used as the pair index. Color of the block represents: Green: free rows and free blocks; Blue: all free indices not in free rows; Red: bubble functions. Blue or green blocks are free indices in $\mathscr N^f(\mathbb S)$. All white blocks are pair indices and the corresponding coefficients are determined by the free variables through the constraints.}
\label{fig:TNdecompositionS}
\end{figure}

Given an $f\in \Delta_{\ell}(T)$, choose a $t$-$n$ basis $\{\bs t_i^f, \bs n^f_j\}_{i=1,\ldots,\ell}^{j = 1,\ldots,n-\ell}$ and decompose $\mathbb R^n = \mathscr T^f\oplus \mathscr N^f$. All basis vectors are normalized. We have the matrix decomposition
\begin{equation*}
\mathbb M = (\mathbb R^n \otimes \mathscr T^f )\oplus (\mathbb R^n \otimes \mathscr N^f). 
\end{equation*}
We modify each component to impose the symmetric constraint. For the tangential component, we simply take
\begin{equation*}
\mathscr T^f(\mathbb S) := (\mathbb R^n \otimes \mathscr T^f )\cap \mathbb 
S.
\end{equation*}
Using the tensor product with the scalar bubble polynomial, we can construct 
\[
\mathbb B_r\mathscr T^f(\mathbb S) :=b_f\mathbb P_{r-(\ell +1)}(f)\otimes\mathscr T^f(\mathbb S)\subset \ker({\rm tr}^{\div})\cap \mathbb P_r(T; \mathbb S).
\]

We provide a specific example for $n=3, \ell = 2$, i.e., a $t$-$n$ decomposition on a face $F$ of a tetrahedron. Choose a $t$-$n$ basis $\{\bs t^F_1, \bs t^F_2, \bs n_F\}$. An element in $\mathbb R^n \otimes \mathscr T^F$ has the matrix representation $
\begin{pmatrix}
 * & * & 0\\
 * & * & 0\\
 * & * & 0
\end{pmatrix},
$
where $*$ represents a generic value. The symmetric constraint implies an element in $\mathscr T^{F}(\mathbb S)$ has the form
$
\begin{pmatrix}
 * & * & 0\\
   & * & 0\\
0 & 0 & 0
\end{pmatrix}
$. The value $a_{21}$ is left blank since due to the symmetric constraint it is equal to $a_{12}$. We call $(1,2)$ a free index while $(2,1)$ is the pair index of $(1,2)$.

We then move to the normal component. A naive definition of $\mathscr N^f(\mathbb S)$ would be 
$(\mathbb R^n \otimes \mathscr N^f) \cap \mathbb S$. Unfortunately in general for three subspaces of a vector space:
\[
(A\cap C) \oplus (B\cap C) \subseteq (A\oplus B)\cap C.
\]
And the equality may not hold.  We continue the example with $n=3, \ell = 2$. An element in $\mathbb R^n \otimes \mathscr N^F$ has the matrix form $
\begin{pmatrix}
 0 & 0 & *\\
 0 & 0 & *\\
 0 & 0 & *
\end{pmatrix}
$
w.r.t the basis $\{\bs t^F_1, \bs t^F_2, \bs n_F\}$. An element in $(\mathbb R^n \otimes \mathscr N^F) \cap \mathbb S$ has the form
$
\begin{pmatrix}
 0 & 0 & 0\\
  0 & 0 & 0\\
  0 & 0 & *
\end{pmatrix}
$. Then 
\[
\dim ((\mathbb R^n \otimes \mathscr T^F) \cap \mathbb S) + \dim ((\mathbb R^n \otimes \mathscr N^F) \cap \mathbb S) = 3 + 1 = 4 < 6 = \dim \mathbb S. 
\]
The discrepancy is due to the fact that the constraints $a_{13} = a_{31}$ and $a_{23} = a_{32}$ have been  used duplicately in both $(\mathbb R^n \otimes \mathscr T^F) \cap \mathbb S$ and $(\mathbb R^n \otimes \mathscr N^F) \cap \mathbb S$. We need to make sure one constraint is used only once either in $\mathscr T^f(\mathbb S)$ or $\mathscr N^f(\mathbb S)$.

To this end, for each constraint, we set the index $(j, i)$ with $i< j$ as the pair index and call $(i, j)$ a free index. The pair index value $a_{ji}$ is determined by the free variable $a_{ij}$ through the constraint $a_{ji} = a_{ij}$. 

We introduce the concept: {\em normal constraints}. We call the constraint $a_{ij}=a_{ji}, i\neq j,$ a normal constraint if both $(i,j)$ and $(j,i)$ are in the normal component, i.e., $i,j\geq \ell +1$ w.r.t the $t$-$n$ basis $\{\bs t_i^f, \bs n^f_j\}_{i=1,\ldots,\ell}^{j = 1,\ldots,n-\ell}$. For example, in Fig.~\ref{fig:TNdecompositionS} (b), for an edge $e\in \Delta_1(T)$, there are $(n-1)(n-2)/2$ normal constraints. As the constraint involves two entries, no normal constraints for $F\in \Delta_{n-1}(T)$; see Fig.~\ref{fig:TNdecompositionS} (c). 
The normal constraints will be imposed inside the normal components. 

For non-normal constraint, in the pair index $(j,i)$, $i$ is inside the tangential component. We can use it to change the basis without affecting the normal trace. As an example, consider the circle block in Fig.~\ref{fig:TNdecompositionS} (b)-(c), the basis function is changing from $\bs t\otimes \bs n$ to $\sym(\bs t\otimes \bs n) = (\bs n\otimes \bs t + \bs t\otimes \bs n)/2$ and $ 2\sym(\bs t\otimes \bs n)\bs n_F =  (\bs t\otimes \bs n)\bs n_F$. 
By doing this way, we ensure each constraint is used only once and the normal trace remains unchanged.  

In summary, for $f\in\Delta_{\ell}(T)$, $\ell=0,\ldots,n$, 
define
\begin{align*}
\mathscr T^f(\mathbb S) &:= \textrm{span}\big\{\sym(\bs t_i^f\otimes \bs t_j^f),  1\leq i\leq j\leq \ell\big\},\\
\mathscr N^f(\mathbb S) &:= \textrm{span}\big\{\sym(\bs t_i^f\otimes \bs n_j^f),  1\leq i\leq\ell,1\leq j\leq n-\ell\big\} \\
&\quad\;\oplus\textrm{span}\big\{\sym(\bs n_i^f\otimes \bs n_j^f),  1\leq i\leq j\leq n-\ell\big\}.
\end{align*}
By counting the dimensions, it is easy to show the direct decomposition
\[
\mathbb S = \mathscr T^f(\mathbb S) \oplus \mathscr N^f(\mathbb S).
\]
Then tensor product with the scalar bubble function to obtain 
\[
\mathbb B_r\mathscr T^f(\mathbb S)=b_f\mathbb P_{r-(\ell +1)}(f)\otimes\mathscr T^f(\mathbb S),\quad \mathbb B_r\mathscr N^f(\mathbb S)=b_f\mathbb P_{r-(\ell +1)}(f)\otimes\mathscr N^f(\mathbb S).
\]
With the $t$-$n$ decomposition,~\eqref{eq:SLagrange} can be rewritten as
\[
\mathbb P_r(T; \mathbb S)  = \Oplus_{\ell = 0}^n\Oplus_{f\in \Delta_{\ell}(T)} \left [\mathbb B_r\mathscr T^f(\mathbb S)\oplus \mathbb B_r\mathscr N^f(\mathbb S) \right ].
\]
Define the polynomial bubble space 
\[
\mathbb B_r(\div, T; \mathbb S) := \ker({\rm tr}^{\div})\cap \mathbb P_r(T; \mathbb S).
\]
Again the tangential component $\mathbb B_r\mathscr T^f(\mathbb S)$ will contribute to the bubble space. Namely 
for $r\geq2$, it holds that
\begin{equation}\label{eq:divSbubbledecomp}
\mathbb B_r(\div, T; \mathbb S) =  \Oplus_{\ell = 1}^n\Oplus_{f\in \Delta_{\ell}(T)} \mathbb B_r\mathscr T^f(\mathbb S).
\end{equation}
Characterization of $\mathbb B_r(\div, T; \mathbb S)$ in \eqref{eq:divSbubbledecomp} is new and different with the one given in~\cite{Hu2015,HuZhang2015}.

The normal component $\mathbb B_r\mathscr N^f(\mathbb S)$ will determine the trace of the div operator. Notice that due to the constraint, not all $n^2$ components of the matrix are included when defining $\mathbb B_r\mathscr T^f(\mathbb S)$ and $\mathbb B_r\mathscr N^f(\mathbb S)$. In Fig.~\ref{fig:TNdecompositionS}, function values in all white blocks are determined by the corresponding free variables through the constraints. 
%

Unlike the traceless case, not all normal DoFs can be redistributed to faces since the normal constraint should be imposed on $\mathscr N^f \otimes \mathscr N^f$ with a global normal basis $\{\bs n_i^f\}$, which can be thought of as the super-smoothness induced by the constraint.
%
%
For example, for symmetric matrix $\boldsymbol{A}$, the restriction of $\boldsymbol{A}$ to the normal plane of $f$, which is a symmetric matrix of smaller size $(n-\ell)\times (n-\ell)$, should be continuous due to DoF \eqref{eq:divfemSkdof2}.  The tangential-normal component can be redistributed to face $F$. Therefore, in DoFs \eqref{eq:divfemSkdof}, \eqref{eq:divfemSkdof3} is posed globally and \eqref{eq:divfemSkdof4} is facewisely.


\begin{theorem}[$H(\div;\mathbb S)$-conforming finite elements]\label{thm:femStensor}
Let $0\leq m\leq n -2$. For each $f\in \Delta_{\ell}(\mathcal T_h)$, we choose a global $t$-$n$ basis $\{\bs t^f_1, \ldots, \bs t_{\ell}^f, \bs n^f_{1}, \ldots, \bs n^f_{n-\ell}\}$. Then the {\rm DoFs}
\begin{subequations}\label{eq:divfemSkdof}
\begin{align}
\label{eq:divfemSkdof1}
\boldsymbol{A}(\texttt{v}),
&\quad \texttt{v}\in \Delta_{0}(\mathcal{T}_h), \bs A\in \mathbb S, \\
\label{eq:divfemSkdof2}
\int_f((\bs n^f_i)^{\intercal}\boldsymbol{A}\bs n^f_j)\,{q}\dd s,
&\quad f\in \Delta_{\ell}(\mathcal{T}_h), {q}\in \mathbb P_{r-(\ell +1)}(f), \\
&\quad 1\leq i\leq j\leq n-\ell, \; \ell = 1,\ldots, n-1, \notag\\
\label{eq:divfemSkdof3}
\int_f((\bs t^f_i)^{\intercal}\boldsymbol{A}\bs n^f_j) \, q\dd s,
&\quad f\in \Delta_{\ell}(\mathcal{T}_h), q\in \mathbb P_{r-(\ell +1)}(f), \\
&\quad i=1,\ldots, \ell, \; j = 1, \ldots, n- \ell, \, \ell = 1,\ldots, m, \notag\\
\label{eq:divfemSkdof4}
\int_f ((\bs t^f_i)^{\intercal}\boldsymbol{A}\bs n_F)|_F\,{q}\dd s, &\quad F\in \Delta_{n-1}(\mathcal T_h), \; f\in \Delta_{\ell}(F), {q}\in \mathbb P_{r-(\ell +1)}(f),\\
&\quad i=1,\ldots,\ell,   \; \ell = m+1,\ldots, n-1, \notag\\
\label{eq:divfemSkdof5}
\int_T \boldsymbol{A}:\boldsymbol{q} \dx, &\quad   T\in \mathcal T_h, \; \bs q\in \mathbb B_r(\div, T;\mathbb S),
\end{align}
\end{subequations}
will determine a space $V^r(\mathbb S)\subset H(\div,\Omega;\mathbb S)$, where
\begin{align*}
V^r(\mathbb S):=\{&\boldsymbol{A}\in L^2(\Omega;\mathbb S): \boldsymbol{A}|_T\in\mathbb P_r(T; \mathbb S) \quad\forall~T\in\mathcal T_h, \\
&\textrm{DoFs \eqref{eq:divfemSkdof1}-\eqref{eq:divfemSkdof2} are single-valued across $f\in \Delta_{\ell}(\mathcal{T}_h)$ for $\ell=0,\ldots, n-1$}, \\
&\textrm{DoF \eqref{eq:divfemSkdof3} is single-valued across $f\in \Delta_{\ell}(\mathcal{T}_h)$ for $\ell=1,\ldots, m$}, \\
&\textrm{DoF \eqref{eq:divfemSkdof4} is single-valued across $F\in \Delta_{n-1}(\mathcal{T}_h)$} \}.
\end{align*}
\end{theorem}
\begin{proof}
For $T\in\mathcal T_h$ and $f\in\Delta_{\ell}(T)$,
both $\{\bs n^f_1, \ldots, \bs n^f_{n-\ell}\}$ and $\{\bs n_F, F\in \Delta_{n-1}(\mathcal T_h)$, $f\subset F\}$ are basis of the normal plane $\mathscr N^f$. DoFs~\eqref{eq:divfemSkdof1}-\eqref{eq:divfemSkdof4} restricted to $T$ will determine normal component $\mathbb B_r\mathscr N^f(\mathbb S)$ and \eqref{eq:divfemSkdof5} for the tangential component $\mathbb B_r\mathscr T^f(\mathbb S)$, thus uniquely determine $\mathbb P_r(T; \mathbb S)$. 

For $F\in\Delta_{n-1}(\mathcal T_h)$,
DoFs~\eqref{eq:divfemSkdof1}-\eqref{eq:divfemSkdof4} restricted to $F$ will determine
\begin{align*}
\int_f (\boldsymbol{A}\bs n_F)|_F\cdot\boldsymbol{q}\dd s, &\quad f\in \Delta_{\ell}(F), \boldsymbol{q}\in \mathbb P_{r-(\ell +1)}(f;\mathbb R^n),   \; \ell = 0,\ldots, n-1,
\end{align*}
and thanks to the unisolvence of Lagrange element in Theorem~\ref{thm:Lagrangedec} will determine $(\boldsymbol{A}\bs n_F)|_F$. Therefore $V^r(\mathbb S)\subset H(\div,\Omega;\mathbb S)$.
\end{proof}

When $m=n-2$, it is the Hu-Zhang element~\cite{Hu2015,HuZhang2015}. 
When $m=0$, DoF \eqref{eq:divfemSkdof4} can be further merged to one and lead to the modification in~\cite[Lemma 4.5]{Chen;Huang:2021divFinite}
\begin{equation}\label{eq:divStndof}
\int_F (\Pi_F\boldsymbol{A}\bs n_F)\cdot\boldsymbol{q}\dd s, \quad F\in \Delta_{n-1}(\mathcal T_h), \boldsymbol{q}\in \textrm{ND}_{r-2}(F),
\end{equation}
where $\textrm{ND}_{r-2}(F):=\{\boldsymbol{q}\in \mathbb P_{r-1}(F;\mathbb R^{n-1}): \boldsymbol{q}\cdot\boldsymbol{x}\in\mathbb P_{r-1}(F)\}$ and $\Pi_F$ is the projection of a vector to the plane $\mathscr T^F$.

\subsection{Discrete div stability for symmetric tensors}
For each $T\in\mathcal T_h$, the range of the div operator on the bubble space of symmetric tensors~\cite{Hu2015,HuZhang2015} is
\begin{equation}\label{eq:SdivBrsurjection}
\div \mathbb B_r(\div, T; \mathbb S) = \mathbb P_{r-1}(T;\mathbb R^n)\cap {\rm RM}^{\perp},
\end{equation}
where ${\rm RM} = \{\bs N \boldsymbol x + \boldsymbol b: \bs N\in \mathbb K, \boldsymbol b \in \mathbb R^n\}$ and ${\rm RM}^{\perp}$ is the $L^2$-orthogonal complement in $L^2(T; \mathbb R^n)$. It can be proved similar to Lemma \ref{lm:TdivBrsurjection} and an abstract version will be proved in Lemma \ref{lm:divbubbleX}.

Applying the same argument as in Proposition~\ref{pro:Tdiscretedivinfsup}, we derive the div stability of space $V^r(\mathbb S)$ from \eqref{eq:SdivBrsurjection}. 

\begin{proposition}\label{pro:Sdiscretedivinfsup}
Let $r\geq n+1$ and $V^r(\mathbb S)$ be defined in Theorem~\ref{thm:femStensor}. It holds the discrete inf-sup condition
\begin{equation*}
\|\boldsymbol{v}_h\|_0\lesssim \sup_{\boldsymbol{A}_h\in V^r(\mathbb S)}\frac{(\div\boldsymbol{A}_h, \boldsymbol{v}_h)}{\|\boldsymbol{A}_h\|_0+\|\div\boldsymbol{A}_h\|_0}\quad\forall~\boldsymbol{v}_h\in Q_h.
\end{equation*}
\end{proposition}

Notice that due to the extra normal continuity, we cannot modify the face DoFs to relax the degree requirement  $r\geq n+1$ to $r\geq 2$ as we have done for the traceless element in Proposition~\ref{pro:Tdiscretedivinfsup}. 
Thanks to DoF \eqref{eq:divStndof}, the tangential-normal component contains $\mathbb P_1(F)$ for $r\geq 2$. Lower order $H(\div;\mathbb S)$-conforming finite elements are designed by enriching the symmetric quadratic polynomial space with only $(n + 1)$-order normal-normal face bubbles in~\cite{HuangZhangZhouZhu2023}, which have only $n(n+1)^2$ DoFs for the reduced one.

\section{Constraint Tensor Spaces}\label{sec:divtensorspace}
In this section we shall introduce the constraint tensor space $\mathbb X$ as a kernel space, and discover bases of $\mathbb X$. We first recall some background on differential forms, then give concrete formulae on the algebraic operator $s^{k,n-1}$ and define the constraint tensor space $\mathbb X$ as the kernel of $s^{k,n-1}$. Lastly we present two bases of $\mathbb X$.

\subsection{Background on differential forms}
\subsubsection{Increasing sequence}
We mainly follow the notation set in~\cite{ArnoldFalkWinther2009} but with some simplification. For non-negative integers $a, b, l, m$, with $0 \leqslant b-a \leqslant m-l$, define the set of increasing sequences as
$$
\Sigma(a: b, l: m):=\{\sigma:\{a, \ldots, b\} \rightarrow\{l, \ldots, m\} \mid \sigma(a)<\sigma(a+1)<\cdots<\sigma(b)\}.
$$
We will overload the notation $\sigma$ as its range, i.e., for $\sigma \in \Sigma(a: b, l: m)$, we use the same notation $\sigma$ to refer to the set $\{\sigma(i) \mid i=a, \ldots, b\} .$ The set $\Sigma(0: k, 0: n)$ will be mainly used for the description of sub-simplices, and $\Sigma(1: k, 1: n)$ for $k$ differential forms in $\mathbb R^n$. For $\sigma\in \Sigma(0: k, 0: n)$, $f_{\sigma}\in \Delta_{k}(T)$ is the sub-simplex formed by vertices with index $\{\sigma(0), \ldots, \sigma(k)\}$. On the other hand, for $f\in \Delta_k(T)$, the index of its vertices can be sorted in ascending order to get an increasing sequence $\sigma_f$. 

For $\sigma \in \Sigma(0: k, 0: n)$, denote by $\sigma^{*} \in \Sigma(1: n-k, 0: n)$ the complementary map characterized by
\begin{equation*}
\sigma \cup \sigma^{*}=\{0,1, \ldots, n\}.
\end{equation*}
For $\sigma \in \Sigma(1: k, 1: n)$, its complementary map $\sigma^{c} \in \Sigma(1: n-k, 1: n)$ satisfies
\begin{equation*}
 \sigma  \cup  \sigma^{c} =\{1, \ldots, n\}.
\end{equation*}
For the unique element in $\Sigma(a:b,a:b)$, we simply write it as $[a:b]$.

We follow~\cite{Licht2022} to introduce notation on the addition and subtraction of increasing sequences.
Let $\sigma \in \Sigma(a: b, l: m)$. If $q \in[l: m] \backslash  \sigma $, then we write $\sigma+q = q+ \sigma$ for the unique element of $\Sigma(a: b+1, l: m)$ with image $  \sigma  \cup \{q\} .$ In that case, we also write $\epsilon(q, \sigma)$ for the signum of the permutation that orders the sequence $[q, \sigma(a), \ldots, \sigma(b)]$ in the ascending order. For $q\in  \sigma $, $\sigma - q$ is the unique element in  $\Sigma(a: b-1, l: m)$ s.t. $(\sigma - q) + q = \sigma$.

\subsubsection{Differential forms}
We consider an $n$-dimensional domain $\Omega\subset \mathbb R^n$. Usually we choose a Cartesian coordinate and describe a point $x = (x_1, \ldots, x_n)\in \Omega$ in this coordinate. We also use $\mathbb R^n$ to denote the $n$-dimensional linear vector space, which can be identified with the space of points by identifying a point $x$ with the vector $\bs x = \vec{ox}$. We use $\partial_{x_i}$ as the unit vector from the origin $o$ to point $(0,\ldots, 1, \ldots,0)$, which is considered as an element in the tangent space $T_o\Omega$. Its dual basis of $(\mathbb R^n)^*$ is denoted by $\{\dd x_i\}_{i=1}^n$, i.e., $\dd x_i (\partial_{x_j}) = \delta_{i,j}$. We use the standard inner product of vectors to make $\mathbb R^n$ a Hilbert space, which introduces an inner product on $(\mathbb R^n)^*$: $\langle \dd x_i, \dd x_j \rangle = \delta_{i,j}, i,j=1,\ldots,n$. We shall reserve notation $\{\dd x_i\}_{i=1}^n$ for the orthonormal basis induced by the ambient orthonormal coordinate of $\mathbb R^n$. 

A generic basis will be denoted by $\{\dd y_i\}_{i=1}^n$, which may not be orthonormal. We can find another basis $\{\dd \hat{y}_i\}_{i=1}^n$ dual to $\{\dd y_i\}_{i=1}^n$  in the sense that $\langle \dd \hat{y}_i, \dd y_j\rangle =  \delta_{i,j}$. Indeed let $M =( \langle \dd y_i, \dd y_j \rangle)_{i,j=1}^n$. Then $ (\dd \hat{y}_1, \ldots, \dd \hat{y}_n)^{\intercal} = M^{-1}(\dd y_1, \ldots, \dd y_n)^{\intercal}$. When $\{\dd y_i\}_{i=1}^n$ is orthonormal, $\dd\hat{y}_i = \dd y_i$ for $i=1,\ldots, n$ as $M$ is identity.

For a vector space $V$, we define the space of {\rm exterior $k$-forms} as the alternating multilinear  functional space on $V^k:=\underbrace{V\times\cdots\times V}_{k}$ and denote it by $\alt^k(V)$ or simply $\alt^k$ if $V$ is clear in the context. By definition, $\alt^k\subset (V^k)^*$. The best way to study a $k$-form is through the action on $k$ vectors in $V$. 

Let $\omega \in \alt^p$ and $\eta \in \alt^q$, we define the {\rm wedge product} $\omega \wedge \eta \in \alt^{p+q}$:
\begin{eqnarray*}
(\omega \wedge \eta)(v_1,\ldots,v_{p+q})=\sum _{\sigma} {\rm sign} (\sigma)\omega (v_{\sigma(1)},\ldots,v_{\sigma(p)})\eta (v_{\sigma (p+1)},\ldots,v_{\sigma (p+q)}),
\end{eqnarray*}
where the sum is over all permutations $\sigma$ of $\{1,\ldots,p+q\}$, for which $\sigma(1)<\sigma(2)<\cdots<\sigma(p)$, $\sigma(p+1)<\sigma(p+2)<\cdots<\sigma(p+q)$, and ${\rm sign} (\sigma)$ is the signature of the permutation $\sigma$.
We have the determinant formula on the wedge product. 
For $\omega _i\in V^*, v_i\in V,$ $i=1,\ldots,p$,
\begin{equation*}
(\omega _1\wedge \cdots \wedge \omega _p)(v_1,\ldots,v_p)
= \det \left ( \omega _i(v_j) \right )_{i,j=1,\ldots,p}.
\end{equation*}

For a smooth manifold $\Omega$, a $k$-th order {\rm differential form} is a section of the tangent bundle $\cup _{x\in \Omega}\alt^k(T_x\Omega)$, where $T_x\Omega$ is the tangent space at $x$. The linear space formed by all $k$-th differential forms is denoted by $\Lambda ^k(\Omega)$, or simply $\Lambda ^k$. As $\Omega$ is a domain in $\mathbb R^n$, given any point $x$ in the interior of $\Omega$, the tangent space $T_x\Omega$ is isomorphism to $T_o\Omega$ by shifting the origin to $x$. That is we can use one basis $\{ \dd y_i\}$ for all $\alt^1(T_x\Omega), x\in \Omega$.  

For $\sigma\in \Sigma(1: k, 1: n)$, we extend the multi-index notation to write $\dd y_{\sigma}\in \alt^k$: 
$$
\dd y_{\sigma} :=\dd y_{\sigma(1)}\wedge\cdots\wedge\dd y_{\sigma(k)}.
$$
An element $\omega \in \Lambda ^k(\Omega)$ thus has a representation
\begin{equation}\label{eq:representation}
\omega 
=
\sum _{\sigma \in \Sigma(1:k,1:n)} a_{\sigma}(x)\dd y_{\, \sigma}, \quad x\in \Omega.
\end{equation}
For a manifold, the basis $\{\dd y_i \}_{i=1}^n$ is defined on a local chart while in~\eqref{eq:representation}, as $\Omega$ is flat, a global coordinate is used. 

Using~\eqref{eq:representation}, we define the exterior derivative $\dd: \Lambda ^k(\Omega)\rightarrow \Lambda ^{k+1}(\Omega)$ as: for  $\omega =\sum _{\sigma \in \Sigma(1:k,1:n)} a_{\sigma}(x)\dd y_{\sigma}$, define $\dd \omega \in \Lambda ^{k+1}(\Omega)$ by
$$
\dd \omega =\sum _{\sigma \in \Sigma (1:k,1:n)}\sum _{i\notin\sigma} \partial_{y_i} a_{\sigma} \dd y_i\wedge \dd y_{\sigma} = \sum _{\sigma \in \Sigma (1:k,1:n)}\sum _{i\notin \sigma} \left (\partial_{y_i} a_{\sigma} \right )\epsilon(i,\sigma)\dd y_{ i+ \sigma}.
$$
It can be verified that this definition of $\dd \omega$ is independent of the choice of bases.

The Hodge star for the ambient  orthonormal basis $\{\dd x_i \}_{i=1}^n$ is defined as
$$
\star \dd x_i = (-1)^{i-1} \dd x_{i^c},\quad \star \dd x_{i^c} = (-1)^{n-i} \dd x_i,
$$
which satisfy 
$$
\dd x_i\wedge \star \dd x_i = \dd x, \quad \dd x_{i^c}\wedge \star \dd x_{i^c} = \dd x,
$$
with the volume $\dd x: = \dd x_1\wedge \dd x_2\wedge \cdots\wedge \dd x_n.$ By definition, $\star\star \omega = (-1)^{n-1}\omega$ for $\omega\in \alt^1$ or $\alt^{n-1}$. 

We extend the definition to a generic coordinate and define 
$$
*\dd y_i: = (-1)^{i-1}\dd y_{i^c}\quad  \text{satisfying} \; \dd y_i\wedge *\dd y_j = \delta_{i,j}\dd y.
$$

\subsubsection{Inner product}

An intrinsic definition of an inner product on $\alt^k$ is
\begin{equation*}
\langle \omega, \eta \rangle := \sum_{\sigma \in \Sigma(1:k,1:n)}\omega(e_{\sigma(1)}, \ldots, e_{\sigma(k)})\eta(e_{\sigma(1)}, \ldots, e_{\sigma(k)}), 
\end{equation*}
where $(e_{1}, \ldots, e_{n})$ is any orthonormal basis of $\mathbb R^n$.
Then by definition
$$
\omega \wedge \star \ \eta=\langle \omega, \eta \rangle \dx, \quad\omega, \eta\in \alt^k. 
$$
Recall that $\{\dd x_i\}_{i=1}^n$ is an orthonormal basis of $\alt^1$, i.e., $\langle \dd x_i, \dd x_j \rangle = \delta_{i,j}, i,j=1,\ldots,n$. It is naturally extended to an orthonormal basis $\{ \dd x_{\sigma}, \sigma \in \Sigma(1:k,1:n)\}$ of $\alt^k$, i.e. 
\begin{equation*}
\langle \dd x_{\sigma}, \dd x_{\eta} \rangle  = \delta_{\sigma, \eta},  \quad \sigma, \eta \in \Sigma(1:k,1:n).
\end{equation*}
The duality of $\{\dd \hat{y}_i\}_{i=1}^n$ and $\{\dd y_i\}_{i=1}^n$ are also extended to the $k$-forms
\begin{equation}\label{eq:dualkform}
\langle \dd \hat{y}_{\sigma}, \dd y_{\eta} \rangle  = \delta_{\sigma, \eta},  \quad \sigma, \eta \in \Sigma(1:k,1:n).
\end{equation}
But $\{\dd y_i\}_{i=1}^n$ may not be orthogonal.

For $\omega, \eta\in \Lambda^k(\Omega)$, a further integral over the domain is included, i.e.,
$$
(\omega, \eta)_{\Omega} = \int_{\Omega}\langle \omega, \eta \rangle\dx,\quad \omega, \eta\in \Lambda^k(\Omega).
$$
For a sub-manifold $f$ of $\Omega$, the volume form $\dd x$ will induce the one for $f$ and denoted by $\dd x_f$. Define
$$
(\omega, \eta)_f = \int _{f}\langle \omega, \eta \rangle\dd x_f.
$$
For $\omega, \eta\in \Lambda ^k(\Omega)$ with expression
$$
\omega=\sum _{\sigma \in \Sigma(1:k,1:n)} a_\sigma(x) \dd x_{\, \sigma}, \;\hbox{ and }\; \eta=\sum _{\sigma \in \Sigma(1:k,1:n)} b_\sigma(x) \dd x_{\, \sigma},
$$
it is easy to prove that
\begin{equation}\label{eq:innerproduct}
(\omega, \eta)_{\Omega}  
=
\sum _{\sigma \in \Sigma(1:k,1:n)}\int _{\Omega}a_{\sigma}(x)b_{\sigma}(x)\dd x,
\end{equation}
and \eqref{eq:innerproduct} is invariant when changing to another orthonormal basis. 
For non-orthonormal basis, transformation will enter the formulae of the inner product.
%

Denote by $\mathbb P_r\Lambda^k(\Omega)$ the space with polynomial coefficients, and $L^2\Lambda^k(\Omega)$ is the space with square-integrable coefficient functions. The space $H\Lambda^k(\Omega) := \{\omega \in L^2\Lambda^k(\Omega): \dd \omega \in L^2\Lambda^{k+1}(\Omega)\}.$ When $k = n-1$, $H\Lambda^{n-1}(\Omega)$ is isomorphism to $H(\div, \Omega):=\{\bs v\in L^2(\Omega;\mathbb R^n): \div \bs v\in L^2(\Omega)\}.$ 

\subsubsection{Proxy vectors of differential forms}
Representation \eqref{eq:representation} enables us to identify a differential form with a vector function:
$$
\omega \leftrightarrow (a_{\sigma})_{\sigma \in \Sigma(1:k,1:n)},
$$ 
and $(a_{\sigma})$ is called a vector proxy of $\omega$. Be aware that, by definition, the differential form is coordinate independent while a vector proxy depends on the coordinate. 

For a $1$-form $\omega = \sum_{i=1}^n u_i \dd x_i \in \Lambda^1$, define $$\prox_1(\omega) = \bs u = (u_1, u_2, \ldots, u_n)^{\intercal}.$$ 
For a vector $\bs t = (t_1, t_2,\ldots, t_n)^{\intercal}$ representing the tangent vector $ \sum_{i=1}^n t_i\partial_{x_i}$, the action is
$$
\omega (\bs t) = \sum_{i,j=1}^n u_it_j \langle \dd x_i, \partial_{x_j} \rangle = \bs u\cdot \bs t. 
$$
For $\omega\in\Lambda^{n-1}$,
we can write 
$
\omega = \sum_{i=1}^n u_i \star \dd x_i, 
$ 
which induces an isomorphism 
$$
\prox_{n-1}: \omega \to \bs u = (u_1, u_2, \ldots, u_n)^{\intercal}.
$$
By definition, we have
\begin{align*}
\prox_{1} (\star \ \omega) =&\, (-1)^{n-1} \prox_{n-1}(\omega), &  &\omega\in \Lambda^{n-1},&\\
\prox_{n-1} (\star \ \omega) =&\, \prox_{1}(\omega),&  &\omega\in \Lambda^{1}. &
\end{align*}

Notice that the proxy vectors are defined using an orthonormal basis. Using the proxy vectors, we can change the wedge product to the inner product of vectors
\begin{align*}
\langle \omega, \eta \rangle =&\  (\prox_{1} \omega, \prox_{n-1} (\star\ \eta)), \quad \omega, \eta\in \Lambda^1,\\
\langle \omega, \eta \rangle = &\ (\prox_{k} \omega, \prox_{k} \eta), \ \quad \qquad \omega\in \Lambda^k, \eta\in \Lambda^{k}, k=1 \text{ or } n-1,\\
\omega \wedge \eta = &\ (\prox_{1} \omega, \prox_{n-1} \eta) \dd x, \quad \omega\in \Lambda^1, \eta\in \Lambda^{n-1}.
\end{align*}
For $\omega\in \Lambda^{n-1}$, the representation of $\dd \omega $ using the proxy vector is $\div \bs u$, i.e., 
$$
\dd \omega =  (\div\bs u) \dd x, \quad \text{ with } \bs u = \prox_{n-1}\omega.
$$

The $\prox$ operator is a bijection. More precisely, given a vector $\bs u = (u_1,\ldots, u_n)^{\intercal}$ represented in the ambient coordinate, let $\omega = \sum_{i=1}^n u_i\dd x_i\in \Lambda^1$ and $\star \ \omega = \sum_{i=1}^n u_i \star \dd x_{i}\in \Lambda^{n-1}$. Then $\prox_1(\omega) = \prox_{n-1}(\star \ \omega) = \bs u$. 
To resemble the notation of differential forms, we introduce notation
\begin{align*}
\dd \bs u := \prox_1^{-1}(\bs u) =  \sum_{i=1}^n u_i \dd x_i , \qquad 
 \star \, \dd \bs u := \prox_{n-1}^{-1}(\bs u) = \sum_{i=1}^n u_i \star \dd x_{i}.
\end{align*}
Here in $\dd \bs u$, $\dd$ is  understood as a dual operator mapping a tangent vector $\bs u$ to a co-tangent vector $\dd \bs u\in \Lambda^1$, and the symbol $\dd$ is not associated to any differentiation. A textbook notation of $\dd \bs u$ is  ${}^{\flat _1}\bs u$ and $\star \dd \bs u$ is ${}^{\flat _{n-1}}\bs u$.

Denote the proxy vector of $\{\dd y_i\}_{i=1}^n$ by $\{\bs v_i\}_{i=1}^n$ and $\{\dd\hat{y}_i\}_{i=1}^n$ by $\{\hat{\bs v}_i\}_{i=1}^n$. Then $\{\hat{\bs v}_i\}_{i=1}^n$ is dual to  $\{\bs v_i\}_{i=1}^n$ in the sense that $(\hat{\bs v}_i, \bs v_j) = \delta_{i,j}$. Treat $\bs v_i$ as a column vector and form the matrix $V = (\bs v_1, \ldots, \bs v_n)$ and $\hat V = (\hat{\bs v}_1, \ldots, \hat{\bs v}_n)$. The gram matrix is $M = V^{\intercal}V$. Then we have the relation $\hat{V} = V M^{-1}$. 

From 
\begin{align*}
(\bs v_i, \prox_{n-1}(*\dd y_j) ) \dd x = \dd y_i\wedge *\dd y_j = \delta_{i,j}\dd y = \delta_{i,j} \det(V) \dd x,\\
(\hat{\bs v}_i, \prox_{n-1}(*\dd\hat{y}_j) ) \dd x = \dd\hat{y}_i\wedge *\dd\hat{y}_j = \delta_{i,j}\dd\hat{y} = \delta_{i,j} \det(\hat V) \dd x,
\end{align*}
we get 
\begin{align*}
\prox_{n-1}(* \dd y_i) = \det(V) \prox_{1}(\dd \hat{y}_i) = \det(V)\,\hat{\bs v}_i, \\
\prox_{n-1}(* \dd\hat{y}_i) = \det(\hat{V}) \prox_{1}(\dd y_i)  = \det(\hat{V}) \, \bs v_i,
\end{align*}
where the scaling $\det(V)$ or $ \det(\hat{V})$ is due to the non-orthogonality.
In view of proxy vectors, $*$ is like a kind of dual operator mapping a vector to its dual vector. 
%
%
%

\subsubsection{Trace operator}
For $F\in\Delta_{n-1}(T)$, let the trace operator $\tr_F^{\div}:\Lambda^{n-1}(T) \to\Lambda^{n-1}(F)$ be the pullback of the inclusion $F\hookrightarrow T$.
That is for any tangent vectors $v_1, \ldots, v_{n-1}$ of $F$, we also treat them as tangent vectors of $T$ and define
$$
{\rm tr}_F^{\div}\omega(v_1, \cdots, v_{n-1}) :=\omega(v_1, \cdots, v_{n-1}),\quad \omega\in\Lambda^{n-1}(T).
$$
We denote by $\tr^{\div}: \Lambda^{n-1}(T) \to \cup_{F\in \partial T}\Lambda^{n-1}(F)$ as $\tr^{\div}\omega|_F = {\rm tr}_F^{\div}\omega$. 

Let $\bs n_F$ be the normal vector of $F$ so that the orientation of $F$, which is given by the volume $\dd x_F\in \Lambda^{n-1}(F)$, and $\bs n_F$ form a consistent orientation of the ambient orthonormal basis. Then 
\begin{equation}\label{eq:trF}
{\rm tr}_F^{\div} \omega = \bs u\cdot \bs n_F\dd x_F  \quad \text{ with } \bs u = \prox_{n-1}\omega.
\end{equation}
On the other hand, for any $p\in L^2(F)$, we have
$$
(\omega, p \star \dd \bs n_F)_F = \int_F \bs u\cdot \bs n_F p \dd x_F = \int_{F} p\  {\rm tr}_F^{\div}  \omega.
$$

Based on~\eqref{eq:trF}, we can discuss the trace operator in the more familiar vector function setting. The trace operator for space $H(\div, T)$  
$$
{\rm tr}^{\div}: H(\div, T) \to H^{-1/2}(\partial T)
$$
is a continuous extension of ${\rm tr}^{\div} \boldsymbol u = \boldsymbol u\cdot \boldsymbol n|_{\partial T}$ defined on smooth functions. 

\subsubsection{Differential forms in the barycentric coordinates}
As $\sum_{i=0}^{n}\lambda_i = 1$, $\sum_{i=0}^{n}\dd \lambda_i = 0$ and $\{\dd \lambda_0, \ldots, \dd \lambda_n\}$ is not a basis of $\alt^1$. Set a vertex as the origin, without loss of generality, say $\texttt{v}_0$, then $\{\dd \lambda_1, \ldots, \dd \lambda_n\}$ forms a basis of $\alt^1$. In general, through the index $\sigma$, there is one-to-one correspondence between $\Delta_{k-1}(F_{0})$ and $\alt^k(T)$. Namely for $\sigma \in \Sigma(1:k,1:n)$, $f_{\sigma}$ is a $(k-1)$-dimensional simplex in $\Delta_{k-1}(F_{0})$ with vertices $\{\sigma(1), \ldots, \sigma(k)\}$ and $\dd \lambda_{\sigma} = \dd \lambda_{\sigma(1)}\wedge \cdots \wedge \dd \lambda_{\sigma(k)} \in \alt^k$. We can also write as $\dd \lambda_f$ assuming the index of the vertices of $f$ is sorted in the ascending order. 

The $1$-form $\dd \lambda_i$ has a vector representation $\nabla \lambda_i$, which is a scaled normal vector $\bs n_{F_i}$ of face $F_{i}$. For a simplex $f\in \Delta_{\ell}(T)$, $\{\nabla \lambda_i, i\in  f^{*} \}$ are $n- \ell$ normal vectors of $f$ and can span the normal plane of $f$.
The vector representations of $(n-1)$-forms, for $i=1, \ldots, \ell$,
$$
\dd \lambda_{[\sigma(0),\sigma(i)]^*} = 
\dd \lambda_{[0:n]-\sigma(0) - \sigma(i)}:=\dd\lambda_{0}\wedge\cdots\wedge\widehat{\dd\lambda}_{\sigma(0)}\wedge\cdots\wedge\widehat{\dd\lambda}_{\sigma(i)}\wedge\cdots\wedge\dd\lambda_{n},$$
are scaling of tangential vectors $\bs t_{\sigma(0)\sigma(i)}$ of $f_{\sigma}$ and can span the tangent plane of $f$. This is illustrated in Fig.~\ref{fig:tangentnormalvectors}. 

\begin{figure}[htbp]
\subfigure[Face $f_{123}$.]{
\begin{minipage}[t]{0.5\linewidth}
\centering
\includegraphics*[width=2.5in]{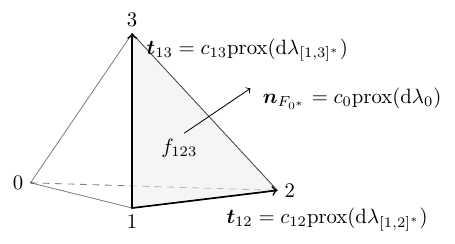}
\end{minipage}}
\subfigure[Edge $f_{13}$.]
{\begin{minipage}[t]{0.5\linewidth}
\centering
\includegraphics*[width=2.25in]{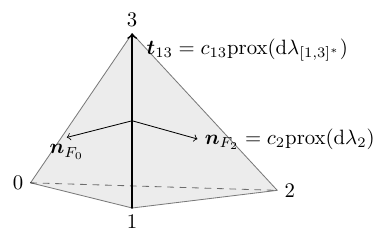}
\end{minipage}}
\caption{Sub-simplices and their tangential and normal vectors.}
\label{fig:tangentnormalvectors}
\end{figure}

\subsection{An algebraic operator}
Define $\alt^{k, i}(\mathbb R^n)=\alt^{k}(\mathbb R^n) \otimes \alt^{i}(\mathbb R^n)$ for $k, i =0,1,\ldots, n$. 
In particular $\alt^{k,n-1}(\mathbb R^n)=\alt^{k} (\mathbb R^n)\otimes \alt^{n-1}(\mathbb R^n) \cong \alt^{k}(\mathbb R^n) \otimes \mathbb R^n$. To simplify the notation, $\alt^{k, i}(\mathbb R^n)$ is abbreviated  as $\alt^{k, i}$. 
In~\cite{arnoldComplexesComplexes2021}, the algebraic operator $s^{k, n-1}: \alt^{k,n-1} \rightarrow \alt^{k-1, n}$ is defined as
\begin{align*}
s^{k, n-1} \omega \left(w_{1}, \cdots, w_{k-1}\right) \left(v_{1}, \cdots, v_{n}\right) 
& := \sum_{i=1}^{n} (-1)^{i-1} \omega \left(v_{i}, w_{1}, \cdots, w_{k-1}\right) \left(v_{1}, \cdots, \widehat{v}_{i}, \cdots, v_{n}\right) \\
& \forall~v_{1}, \cdots, v_{n}, w_{1}, \cdots, w_{k-1} \in \mathbb{R}^{n}.
\end{align*}

Recall that we have reserved $\{ \dd x_i \}$ for a fixed orthonormal basis of $\alt^1(\mathbb R^n)$. We are going to derive more concrete forms of operator $s^{k, n-1}$ in a generic basis $\{ \dd y_i \}$, which may not be orthonormal. 
We expand $\omega\in  \alt^{k,n-1} $ in this basis as
\begin{equation*}
\omega = \sum_{i=1}^n \sum_{\sigma \in \Sigma(1:k,1:n)}a_{\sigma, i} \dd y_{\sigma} \otimes *\dd y_i. 
\end{equation*}
An element in $\alt^{k,n-1}$ can be identified as a matrix $\bs A = (a_{\sigma, i})$ of size $\displaystyle{n \choose k}\times n$ indexed by $(\sigma,i)$ for $i=1,2,\ldots, n$ and $\sigma \in \Sigma(1:k,1:n)$.

\begin{lemma}\label{lem:sexpress}
For $\omega = \sum_{i=1}^n \sum_{\sigma \in \Sigma(1:k,1:n)}a_{\sigma, i} \dd y_{\sigma} \otimes *\dd y_{i},$ we have
\begin{equation*}
s^{k, n-1} \omega = \sum_{\tau \in \Sigma(1:k-1,1:n)} \Big (\sum_{i \in   \tau^c } \epsilon(i, \tau) a_{i + \tau, i} \Big )\dd y_{\tau}\otimes\dd y.
\end{equation*}
\end{lemma}
\begin{proof}
Let $\{\partial y_i\}$ be the basis of the tangent space dual to $\{\dd y_i\}$. That is $\dd y_i(\partial y_j) = \delta_{i,j}$ for $1\leq i,j\leq n$. Given $\sigma\in \Sigma(1:k,1:n)$, we use the notation $\partial y_{\sigma}$ to denote $k$ vectors $(\partial y_{\sigma(1)},\ldots, \partial y_{\sigma(k)})$. Then $\dd y_{\sigma} (\partial y_{\sigma'}) = \delta_{\sigma, \sigma'}$ for $\sigma, \sigma' \in \Sigma(1:k,1:n)$. To get the coefficient of $s^{k, n-1}\omega$ for the component $\dd y_{\tau}\otimes \dd y, \tau \in \Sigma(1:k-1,1:n)$, we check the action 
\begin{align*}
s^{k, n-1} &\omega \left(\partial y_{\tau(1)},\cdots, \partial y_{\tau(k-1)} \right) \left(\partial y_{1}, \cdots, \partial y_{n}\right) \\
&=\sum_{i=1}^{n}(-1)^{i-1} \omega \left(\partial y_{i}, \partial y_{\tau(1)},\cdots, \partial y_{\tau(k-1)}\right) \big(\partial y_{1}, \cdots, \widehat{\partial y}_{i}, \cdots, \partial y_{n}\big )\\
&=\sum_{i \in   \tau^c } \omega \left( \epsilon(i, \tau) \partial y_{i + \tau}\right)\left( (-1)^{i-1}\partial y_{i^c}\right)\\
& =
\sum_{i \in   \tau^c } \epsilon(i, \tau) a_{i + \tau, i} .
\end{align*}
If $i\in  \tau $, then vectors $\partial y_{i}, \partial y_{\tau(1)},\cdots, \partial y_{\tau(k-1)}$ are linearly dependent and thus the term vanishes. So only $i \in   \tau^c $ are left in the summation. 
\end{proof}

For a given $\tau\in \Sigma(1:k-1,1:n)$, we call the sequence of index $\{( i_m + \tau, i_m), i_m \in  \tau^c, m=1,2,\ldots, n-k+1 \}$, the {\em constraint sequence} of $\tau$, which can be also written as $\{(\sigma_{i_m}, i_m), m=1,2,\ldots, n-k+1, \sigma_{i_m} = i_m + \tau\}$. The length of the constraint sequence is $|\tau^c| = n-k+1$. Without loss of generality, we can sort as $i_1 < i_2 < \ldots <i_{n-k+1}$. The first one $(i_1 + \tau, i_1)$ will be called the pair index of the constraint sequence. 

\begin{figure}[htbp]
\subfigure[$s^{1,4}$ for $\tau = \varnothing$]{
\begin{minipage}[t]{0.3\linewidth}
\centering
\includegraphics*[width=3.15cm]{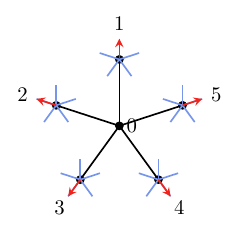}
\end{minipage}}
\subfigure[$s^{3,4}$ for $\tau=(1,2)$]{
\begin{minipage}[t]{0.3\linewidth}
\centering
\includegraphics*[width=3.25cm]{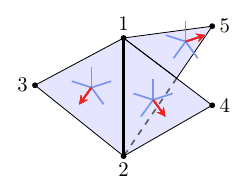}
\end{minipage}}
\subfigure[$s^{4,4}$ for $\tau=(1,2,3)$]
{\begin{minipage}[t]{0.35\linewidth}
\centering
\includegraphics*[width=3cm]{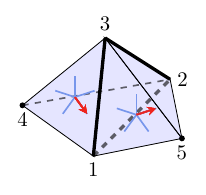}
\end{minipage}}
\caption{Illustration of constraint sequences and operator $s^{k, n-1}$ for $k=1,3$ and $k=n-1$ for a simplex in $\mathbb R^5$. For a $(k-2)$-dimensional sub-simplex $\tau$, $\sigma = i+\tau$ with $i\in \tau^{c}$ are all $(k-1)$-dimensional sub-simplex containing $\tau$. For each $\sigma$, we attach a vector of length $n$ and consider its $i$-th component, which is a representation of  $\dd y_{\sigma} \otimes *\dd y_i$. The constraint sequence of $\tau$ will be formed by all $(\sigma, i)$ surrounding $\tau$.}
\label{fig:s}
\end{figure}

We provide some visualization of the constraint sequence. The tensor product $\dd y_{\sigma} \otimes *\dd y_i$ can be visualized as follows: for each sub-simplex $f_{\sigma}$, we attach a vector of length $n$. The sub-index $i$ in $*\dd y_i$ corresponds to the $i$-th component of this vector. See $5$-edge stars in Fig.~\ref{fig:s}. We can associate the $(k-1)$-form $\dd y_{\tau}$ with the sub-simplex $f_{\tau}$ of dimension $k-2$, then $\{f_{i + \tau}\}_{i \in   \tau^c }$ corresponds to all $(k-1)$-dimensional sub-simplices of $F_0$ (excluding $f_{0+\tau}$ as index $0$ is not used in differential forms) using $\tau$ as a boundary face. See Fig.~\ref{fig:s}.

If we identify entries of the matrix proxy as nodes of a graph, a constraint sequence will define a path of nodes. See Fig.~\ref{fig:graph}. Indices in different constraint sequences are different. Namely for $\tau \neq \tau'$, $(i + \tau, i)\neq (j+ \tau', j)$ as either $i\neq j$ or $i+\tau \neq j+\tau'$. On the graph, different constraint sequences will correspond to disjoint paths. 

Since only the value $a_{\sigma, i}$ on the constraint sequence will contribute to the image $s^{k,n-1}\omega$, we conclude that
\begin{equation*}
s^{k,n-1}(\dd y_{\sigma} \otimes *\dd y_{i}) = 0 \iff i\in \sigma^c.
\end{equation*} 
For each row, i.e., for a fixed $\sigma$, there are $k$ entries $(\sigma, i), i\in \sigma$ on $k$ different constraint sequences and the rest $n-k$ entries are not in any constraint sequence.

\begin{figure}[htbp]
\begin{center}
\includegraphics[width=7.5cm]{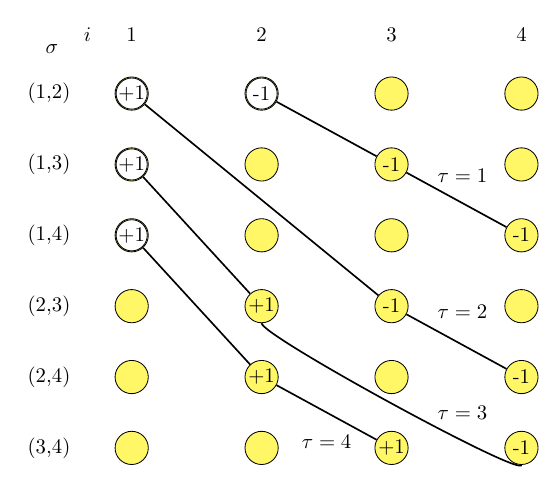}
\caption{Constraint sequences for $n=4, k=2$ with sign $\epsilon(i,\tau)$. A constraint sequence will define a path of nodes. Different constraint sequences will correspond to disjoint paths. The white circle denotes the pair index of each constraint sequence which is a non-free index. Other circles in yellow are free indices. For each row, there are $k$ entries $(\sigma, i), i\in \sigma$ on $k$ different constraint sequences and the rest $n-k$ entries are not in any constraint sequence}.  
\label{fig:graph}
\end{center}
\end{figure}

We can identify $\alt^{k-1, n}$ as a vector in $\mathbb R^{\dim \alt^{k-1} }$.
With the matrix and vector representations, the $s^{k, n-1}$ operator induces an operator from  matrix $\bs A$ to a vector in $\mathbb R^{\dim \alt^{k-1} }$ and will be still denoted by $s^{k, n-1}$. We collect the coefficients of a constraint sequence and denote by $\bs a_{\tau} = (a_{i_m+\tau, i_m})_{m=1,2,\ldots,n-k+1}$. Let $\bs \epsilon_{\tau} = (\epsilon(i_m,\tau))_{m=1,2,\ldots,n-k+1}$ be the corresponding sign vector.  We can write the operator as
\begin{equation*}
(s^{k, n-1} \bs A)_{\tau} =  \bs a_{\tau} \cdot \bs \epsilon_{\tau}, \quad \tau\in \Sigma(1:k-1,1:n).
\end{equation*}
That is the action is along each constraint sequence. 

\begin{lemma}
For $k=1,\ldots, n-1$, the operator $s^{k, n-1}: \alt^{k,n-1} \rightarrow \alt^{k-1, n}$ is onto. And $s^{n, n-1}: \alt^{n, n-1} \rightarrow \alt^{n-1, n}$ is a bijection and its proxy $s^{n,n-1}$ is the transpose operator. 
\end{lemma}
\begin{proof}
First consider $k=1,\ldots, n-1$. By the linearity, it suffices to prove for $\tau=[1,\cdots, k-1]$, there exists $\omega = \sum_{i=1}^n \sum_{\sigma \in \Sigma(1:k,1:n)}a_{\sigma, i} \dd y_{\sigma} \otimes *\dd y_{i} $ such that $s^{k, n-1} \omega =\dd y_{\tau}\otimes\dd y$. 

For a given $\tau$, we just pick up one $\sigma= i + \tau$ from its constraint sequence and set the coefficient be $\epsilon(i, \tau)$. More precisely, take $a_{[1,\cdots, k], k}=\epsilon(k, [1:k-1])=(-1)^{k-1}$, and $a_{\sigma, i}=0$ for the rest. Then
$$
s^{k, n-1} \omega = \sum_{\tilde\tau \in \Sigma(1:k-1,1:n)} \left (\sum_{i \in   \tilde\tau^c } \epsilon(i, \tilde\tau) a_{i+ \tilde\tau, i} \right )\dd y_{\tilde\tau}\otimes\dd y=\dd y_{\tau}\otimes\dd y.
$$ 

Next consider $k=n$. For $\omega = \sum_{i=1}^n a_i \dd y \otimes *\dd y_{i},$ we have
\begin{align*}
s^{n,n-1} \omega &= \sum_{\tau \in \Sigma(1:n-1,1:n)} \left (\sum_{i \in   \tau^c } \epsilon(i, \tau)a_i \right )\dd y_{\tau}\otimes\dd y = \sum_{i=1}^n a_i *\dd y_{i} \otimes \dd y.
\end{align*}
Namely $s^{n,n-1}$ maps the row vector $(a_1, \ldots, a_n)$ to a column vector $(a_1, \ldots, a_n)^{\intercal}$. 
\end{proof}
If we identify the row and column vector by the transpose, we can also say $s^{n,n-1}$ is the identity operator.

\subsection{Constraint tensor spaces}
Now we are ready to introduce the tensor space 
$$
\mathbb X := \ker(s^{k, n-1})\cap \alt^{k,n-1}=\{ \omega \in \alt^{k,n-1} \mid s^{k, n-1}\omega = 0 \}, \quad 1\leq k\leq n-1.
$$
When $k=n$, as $s^{n,n-1}$ is bijection, $\mathbb X = \{ 0 \}$ is trivial. So throughout the rest of the paper, we will consider the non-trivial case $1\leq k\leq n-1$.
For a given basis $\{\dd y_i \}$, it will be more convenient to work on the matrix representation
\begin{align*}
\mathbb X := \bigg\{ \omega =  \sum_{i=1}^n \sum_{\sigma \in \Sigma(1:k,1:n)}& a_{\sigma, i} \dd y_{\sigma} \otimes *\dd y_{i} \mid 
\bs A = (a_{\sigma, i}) \in \mathbb R^{{n \choose k}\times n } : \\
&\sum_{i \in   \tau^c } \epsilon(i, \tau) a_{i + \tau, i} = 0, \quad \forall~\tau \in \Sigma(1:k-1,1:n)\bigg\}.
\end{align*}
As $s^{k, n-1}$ is surjective, 
\begin{equation}\label{eq:dimX}
\dim \mathbb X = \dim \alt^{k,n-1} - \dim \alt^{k-1,n} = n {n \choose k} - {n \choose k-1}=(n-k){n+1 \choose k}.
\end{equation}

For the orthonormal basis $\{ \dd x_i\}$, we introduce 
$$
H(\div, \Omega; \mathbb X): =\Big\{ \bs A = (a_{\sigma, i}) \in L^2(\Omega,\mathbb X): \div\bs A\in L^2\Big(\Omega,\mathbb R^{{n \choose k}}\Big)\Big\} 
$$
with $\div\bs A:=\left(\sum_{i=1}^n\partial_{x_i}a_{\sigma, i}\right)$, i.e., the divergence operator is applied row-wise. Its differential form version is
$$
H(\dd_{n-1},\Omega;\mathbb X): =\Big\{ \omega \in L^2\Lambda^{k,n-1}(\Omega): s^{k, n-1}\omega = 0, \dd_{n-1}\omega \in L^2\Lambda^{k,n}(\Omega)\Big\},
$$
where the exterior derivative $\dd_{n-1}$ is applied to the component $\Lambda^{n-1}$ in $\Lambda^{k,n-1}$. 
In view of the matrix proxy $\bs A$  in the orthonormal basis $\{\dd x_i\}$, the trace on face $F$
$$
{\rm tr}_F^{\div} \bs A =  \bs A\bs n_F 
$$
is a column vector of length $\displaystyle{n \choose k}$, and should be continuous on the $(n-1)$-dimensional faces across simplices. 

\begin{example}\label{exm:knm1}\rm
Consider $k = n-1$. For $\omega = \sum_{i=1}^n\sum_{j=1}^n a_{i, j} *\dd y_{i} \otimes *\dd y_{j},$ we have
\begin{align*}
s^{n-1,n-1} \omega &= \sum_{\tau \in \Sigma(1:n-2,1:n)} \left (\sum_{i \in   \tau^c } (-1)^{([1: n] \backslash [i + \tau]) - 1}\epsilon(i, \tau) a_{[1: n] \backslash [i + \tau], i} \right )\dd y_{\tau}\otimes\dd y \\
&= \sum_{1\leq i<j\leq n} (-1)^{i+j}\left (a_{j, i}-a_{i, j} \right )\dd y_{(i,j)^c}\otimes\dd y,    
\end{align*}
where $\dd y_{(i,j)^c}:=\dd y_{1}\wedge\cdots\wedge\widehat{\dd y}_{i}\wedge\cdots\wedge\widehat{\dd y}_{j}\wedge\cdots\wedge\dd y_{n}$.
In terms of the matrix proxy, it holds
$$
s^{n-1,n-1} (\bs A) = 2{\rm vskw}(\bs A)\quad \textrm{ with }\;\; \bs A=\left(a_{i, j}\right)_{n\times n}\in\mathbb R^{n\times n},
$$
where operator ${\rm vskw}: \mathbb R^{n\times n}\to \mathbb R^{n(n-1)/2}$ is defined by
$$
\big({\rm vskw}(\bs A)\big)_{\sigma}=\frac{1}{2}(-1)^{i+j}\left (a_{j, i}-a_{i, j} \right ) \quad\textrm{ with }\;\; \sigma=[i, j]\in\Sigma(1:2,1:n).
$$
Thus $\mathbb X = \mathbb S$ consists of all symmetric matrices.
\end{example}

\begin{example}\label{exm:k1} \rm
Consider $k=1$. For $\omega = \sum_{i=1}^n\sum_{j=1}^n a_{j, i} \dd y_j \otimes *\dd y_{i},$ we have
$$
s^{1,n-1} \omega = \sum_{i=1}^n a_{i,i}\dd y.
$$ 
In terms of the matrix proxy, 
$$
s^{1,n-1} (\bs A) = {\rm trace}(\bs A).
$$
Thus $\mathbb X = \mathbb T$ is the traceless matrix space.
\end{example}

\subsection{Bases of the constraint tensor space}
Recall that
$$
\alt^{k,n-1} = {\rm span} \{\dd y_{\sigma}\otimes *\dd y_{i}, \sigma \in \Sigma(1:k,1:n), i=1,\ldots, n\}, \quad 1\leq k\leq n.
$$
We shall modify the basis function $\dd y_{\sigma}\otimes *\dd y_{i}$ to get a basis of $\mathbb X = \alt^{k,n-1}\cap \ker(s^{k,n-1})$.

Recall that we consider the non-trivial case: $1\leq k\leq n-1$ so that the length of the constraint sequence $n-k+1$ is greater than or equal to $2$. We shall define an oblique (non-orthogonal) projection operator $\pi_{\mathbb X}$ applied to $\dd y_{\sigma}\otimes *\dd y_{i}$ for index pair $(\sigma, i)$. 

When $i\in  \sigma^c $, $s^{k, n-1}(\dd y_{\sigma}\otimes *\dd y_{i} ) = 0$, and thus we keep it unchanged, i.e., $$\pi_{\mathbb X} ( \dd y_{\sigma}\otimes *\dd y_{i}) = \dd y_{\sigma}\otimes *\dd y_{i}, \quad i\in \sigma^c.$$
For each constraint sequence $(\sigma_{i_m}, i_m)_{m=1,\ldots, n-k+1}$, set $(\sigma_{i_1}, i_1)$ as the pair index and modify basis functions to, for $m=1,\ldots, n-k+1$,
\begin{equation*}
\pi_{\mathbb X}(\dd y_{\sigma_{i_m}}\otimes *\dd y_{i_m}):= \dd y_{\sigma_{i_m}}\otimes *\dd y_{i_m} -  \epsilon(i_m, \tau)  \epsilon(i_1, \tau) \dd y_{\sigma_{i_1}}\otimes*\dd y_{i_1}.
\end{equation*}
In terms of the coefficient vector,  $\pi_{\mathbb X}$ will map the vector $\bs a_{\tau} = (0,\ldots, 1, \ldots, 0)$ to the vector 
$$
\tilde{\bs a}_{\tau} = ( -  \epsilon(i_m, \tau)  \epsilon(i_1, \tau), 0, \ldots, 1, \ldots, 0),
$$ so that the constraint $\tilde{\bs a}_{\tau}\cdot \bs \epsilon_{\tau} = 0$ is satisfied. By the linear combination, we get the mapping $\pi_{\mathbb X}: \alt^{k, n-1} \to \mathbb X$. 

An index $(\sigma, i)$ will be called a {\em free index} if $\pi_{\mathbb X} ( \dd y_{\sigma}\otimes *\dd y_{i})\neq 0$. By definition, only $\pi_{\mathbb X}(\dd y_{\sigma_{i_1}}\otimes*\dd y_{i_1})=0$, i.e., only the pair index of each constraint sequence is not free. Therefore the number of basis functions is reduced by one for each constraint sequence. In total, we remove ${n \choose k-1}$ basis functions of $\alt^{k,n-1}$ and obtain a basis of $\mathbb X$:
\begin{equation*}
\mathbb X = {\rm span} \{  \pi_{\mathbb X} ( \dd y_{\sigma}\otimes *\dd y_{i}), (\sigma, i) \text{ is free } \}.
\end{equation*}
In Section~\ref{sec:geodecompdivtensor}, we will use $\pi_{\mathbb X}$ to define a $t$-$n$ decomposition of $\mathbb X$ and construct finite element subspaces of $H(\div,\Omega;\mathbb X)$.

Next we will present intrinsic bases of $\mathbb X$ using the barycentric coordinate.

\begin{lemma}
For any $\sigma\in\Sigma(0:n-k, 0:n)$ and $i=1,\ldots, n-k$, it holds
$$
\dd\lambda_{\sigma^{*}}\otimes\dd\lambda_{[\sigma(0), \sigma(i)]^*}\in\mathbb X,
$$
where 
$$
\dd\lambda_{[\sigma(0), \sigma(i)]^*} = \dd \lambda_{([0:n]-\sigma(0))- \sigma(i)} = \dd\lambda_{0}\wedge\cdots\wedge\widehat{\dd\lambda}_{\sigma(0)}\wedge\cdots\wedge\widehat{\dd\lambda}_{\sigma(i)}\wedge\cdots\wedge\dd\lambda_{n}. 
$$
\end{lemma}
\begin{proof}
We treat $\sigma(0)$ as the origin. 
Let $y_i=\lambda_{\sigma(i)}$ for $i=1,\ldots, n-k$, and $y_{n-k+i}=\lambda_{\sigma^*(i)}$ for $i=1,\ldots, k$. Then $\dd\lambda_{\sigma^{*}}\otimes\dd\lambda_{[\sigma(0), \sigma(i)]^*}= (-1)^{i-1} (\dd y_{n-k+1}\wedge\cdots\wedge\dd y_{n})\otimes*\dd y_{i} \in\mathbb X$ for $i=1,\ldots, n-k$ as $i\in [n-k+1, \ldots, n]^c$.
\end{proof}

The vector proxy of $\dd\lambda_{[\sigma(0), \sigma(i)]^*}$ is a scaling of the edge vector $\bs t_{\sigma(0)\sigma(i)}$, which is on the tangent plane of $f_{\sigma}\in \Delta_{n-k}(T)$. The $k$-form $\dd \lambda_{\sigma^*}$ is the volume of the normal plane of $f_{\sigma}$, i.e. $\mathscr N^{f_{\sigma}}$ of dimension $k$. Their tensor product for all $f\in \Delta_{n-k}(T)$ forms a basis of $\mathbb X$. 

The inner product of $k$-form $\langle \omega, \eta \rangle$ can be extended to $\alt^{k,n-1}$ by the tensor product. Define $P_{\mathbb X}$ as the orthogonal projection from $\alt^{k,n-1}$ to $\mathbb X$ w.r.t. the inner product $\langle\cdot, \cdot \rangle$. 

\begin{lemma}\label{lem:Xdualbasis}
The set $$\big\{ \dd\lambda_{\sigma^{*}}\otimes\dd\lambda_{[\sigma(0), \sigma(i)]^*} \big\}_{i=1,\ldots, n-k}^{\sigma\in\Sigma(0:n-k, 0:n)}$$ in $\mathbb X$ is dual to the set
$$
\big\{P_{\mathbb X}\big(\star\dd\lambda_{\sigma-\sigma(i)}\otimes\star\dd\lambda_{\sigma(i)}\big) \big\}_{i=1,\ldots, n-k}^{\sigma\in\Sigma(0:n-k, 0:n)}$$
in the sense that: for any $\sigma, \tau\in\Sigma(0:n-k, 0:n)$, and $i,j=1,\ldots, n-k$, 
$$
\left \langle \dd\lambda_{\sigma^{*}}\otimes\dd\lambda_{[\sigma(0), \sigma(i)]^*}, P_{\mathbb X}\big(\star\dd\lambda_{\tau-\tau(i)}\otimes\star\dd\lambda_{\tau(i)}\big) \right \rangle = \delta_{\sigma,\tau}\delta_{i,j}.
$$    
\end{lemma}
\begin{proof}
Since $\dd\lambda_{\sigma^{*}}\otimes\dd\lambda_{[\sigma(0), \sigma(i)]^*}\in\mathbb X$, by definition of $P_{\mathbb X}$ and the inner product $\langle \cdot,\cdot \rangle$, it suffices to prove that: for any $\sigma, \tau\in\Sigma(0:n-k, 0:n)$, and $i,j=1,\ldots, n-k$, 
$$
(\dd\lambda_{\sigma^{*}}\otimes\dd\lambda_{[\sigma(0), \sigma(i)]^*})\wedge (\dd\lambda_{\tau-\tau(j)}\otimes\dd\lambda_{\tau(j)})\neq 0
$$
if and only if 
$$
\sigma=\tau \textrm{ and } i=j.
$$
By definition,
\begin{align*}
&\quad(\dd\lambda_{\sigma^{*}}\otimes\dd\lambda_{[\sigma(0), \sigma(i)]^*})\wedge (\dd\lambda_{\tau-\tau(j)}\otimes\dd\lambda_{\tau(j)})=(\dd\lambda_{\sigma^{*}}\wedge\dd\lambda_{\tau-\tau(j)})\otimes(\dd\lambda_{[\sigma(0), \sigma(i)]^*}\wedge\dd\lambda_{\tau(j)}).
\end{align*}
Then 
$(\dd\lambda_{\sigma^{*}}\otimes\dd\lambda_{[\sigma(0), \sigma(i)]^*})\wedge (\dd\lambda_{\tau-\tau(j)}\otimes\dd\lambda_{\tau(j)})\neq0$ is equivalent to
$$
\tau(j)\in \{\sigma(0), \sigma(i)\} \quad\textrm{ and }\quad \tau-\tau(j)\subset \sigma.
$$
This indicates $\tau\subseteq \sigma$. We finish the proof by the fact $\tau$ and $\sigma$ have the same length.
\end{proof}

We are in the position to present intrinsic bases of $\mathbb X$ using the barycentric coordinates. 
\begin{theorem}[Intrinsic bases of $\mathbb X$]\label{thm:basisofX}
The set
\begin{equation*}
\big\{\dd\lambda_{\sigma^{*}}\otimes\dd\lambda_{[\sigma(0), \sigma(i)]^*} \big\}_{i=1,\ldots, n-k}^{\sigma\in\Sigma(0:n-k, 0:n)}
\end{equation*}
is a basis of $\mathbb X$.
Its dual basis is
\begin{equation}\label{eq:dualbasis}
\big\{P_{\mathbb X}\big(\star\dd\lambda_{\sigma-\sigma(i)}\otimes\star\dd\lambda_{\sigma(i)}\big) \big\}_{i=1,\ldots, n-k}^{\sigma\in\Sigma(0:n-k, 0:n)}.
\end{equation}
\end{theorem}
\begin{proof}

The number of the set $\big\{\dd\lambda_{\sigma^{*}}\otimes\dd\lambda_{[\sigma(0), \sigma(i)]^*} \big\}_{i=1,\ldots, n-k}^{\sigma\in\Sigma(0:n-k, 0:n)}$ is $\displaystyle(n-k){n+1 \choose k}$, which equals to $\dim\mathbb X$, cf.~\eqref{eq:dimX}. Hence it suffices to prove that they are linearly independent. 
Assume there exist $c_{\sigma,i}\in\mathbb R$ for each $\sigma\in\Sigma(0:n-k, 0:n)$ and $i=1,\ldots, n-k$ such that 
$$
\sum_{\sigma\in\Sigma(0:n-k, 0:n)}\sum_{i=1}^{n-k}c_{\sigma,i}\dd\lambda_{\sigma^{*}}\otimes\dd\lambda_{[\sigma(0), \sigma(i)]^*}=0.
$$
Then apply the wedge product with $\dd\lambda_{\tau-\tau(j)}\otimes\dd\lambda_{\tau(j)}$ for $\tau\in\Sigma(0:n-k, 0:n)$ and $0\leq j\leq n-k$, due to Lemma~\ref{lem:Xdualbasis}, we obtain $c_{\tau,j} = 0$. As $(\tau,j)$ runs over the whole set $\Sigma(0:n-k, 0:n)\times\{1,\ldots, n-k\} $, we conclude all $c_{\tau,j}$ vanishes. 
\end{proof}

\begin{example}\rm
When $k=n-1$, $f_{\sigma}$ is an edge and the vector proxy of $\dd \lambda_{\sigma^*}$ is a scaling of the tangent vector $\bs t_{\sigma(0)\sigma(1)}$ of $f_{\sigma}$. A basis of $\mathbb X$ is thus given by $
\big\{\dd\lambda_{\sigma^{*}}\otimes\dd\lambda_{\sigma^{*}} \big\}_{\sigma\in\Sigma(0:1, 0:n)}$, and the dual basis is $
\big\{\sym(\star\dd\lambda_{\sigma(0)}\otimes\star\dd\lambda_{\sigma(1)}) \big\}_{\sigma\in\Sigma(0:1, 0:n)}$. Equivalently, in terms of the vector proxy, a basis of $\mathbb S$ is $
\big\{\bs t^{e}\otimes\bs t^{e} \big\}_{e\in\Delta_1(T)}
$ and the dual basis is $
\big\{\sym(\boldsymbol{n}_{F_i}\otimes\boldsymbol{n}_{F_j}) \big\}_{e={\rm Convex}(\texttt{v}_{i}, \texttt{v}_{j})\in\Delta_1(T)}$, which are crucial in designing the $H(\div;\mathbb S)$ element~\cite{Hu2015,HuZhang2015,ChenHuHuang2018} and useful in the Regge calculus~\cite{christiansenLinearizationReggeCalculus2011}. 
\end{example}

\begin{example}\rm 
When $k=1$, $f_{\sigma}$ is an $(n-1)$-dimensional face $F$ and the vector proxy of $\dd \lambda_{\sigma^*}$ is $\bs n_F$. 
In the matrix proxy, a basis of $\mathbb T$ is $
\big\{\bs n_F\otimes\bs t_{i}^{F} \big\}_{i=1,\ldots, n-1}^{F\in\Delta_{n-1}(T)}
$, which is discovered in~\cite{HuLiang2021} and presented in Lemma \ref{lm:Tbasis}.
\end{example}

\subsection{Formulae on the projections}
We will present an explicit formula on $P_{\mathbb X}$. Recall that the basis $\{\dd \hat{y}_i\}_{i=1}^n$ is dual to $\{\dd y_i\}_{i=1}^n$  in the sense that $\langle \dd \hat{y}_i, \dd y_j\rangle =  \delta_{i,j}$ for $i,j=1,\ldots, n$. The duality also holds for corresponding bases of $\alt^k$; see \eqref{eq:dualkform}. 

\begin{lemma}
It holds
\begin{equation}\label{eq:Xbot}
\mathbb X^{\bot} = {\rm span} \big \{ \hat{\bs \epsilon}_{\tau}^A, \;  \tau \in \Sigma(1:k-1,1:n)\big \},
\end{equation}
where 
\begin{equation}\label{eq:epsilonA}
\hat{\bs \epsilon}_{\tau}^A: = \sum_{i\in \tau^c}\epsilon(i,\tau) \dd \hat{y}_{i + \tau}\otimes *\dd \hat{y}_{i}.
\end{equation}
For $\omega\in \alt^{k,n-1}$ expanded in the basis $\omega = \sum_{\sigma, i} a_{\sigma,i} \dd\hat{y}_{\sigma}\otimes *\dd\hat{y}_i$, it holds 
\begin{equation}\label{eq:projXperpExpress}
P_{\mathbb X^{\bot}} \omega = \sum_{\tau\in \Sigma(1:k-1,1:n)} \frac{ \bs a_{\tau} \cdot \bs \epsilon_{\tau}}{n-k+1} \hat{\bs \epsilon}_{\tau}^A,    
\end{equation}
where $P_{\mathbb X^{\bot}}:= I - P_{\mathbb X}$.
Consequently for $\sigma\in \Sigma(1:k,1:n)$,
\begin{equation}\label{eq:projXExpress}
(P_{\mathbb X}\omega)_{\sigma,i} = 
\begin{cases}
 a_{\sigma,i}, & i\in\sigma^c, \\
 a_{\sigma,i} - \dfrac{ \bs a_{\tau} \cdot \bs \epsilon_{\tau}}{n-k+1} \epsilon(i,\tau), & i\in\sigma$, $\tau=\sigma-i.
\end{cases}
\end{equation}
\end{lemma}
\begin{proof}
Let $\eta=\sum\limits_{i=1}^n \sum\limits_{\sigma \in \Sigma(1:k,1:n)}a_{\sigma, i} \dd y_{\sigma}\otimes *\dd y_i \in \mathbb X$. 
Then 
$$
\langle \hat{\bs \epsilon}_{\tau}^A, \eta \rangle = \sum_{i \in   \tau^c } \epsilon(i, \tau) a_{i + \tau, i} = 0, \quad \quad~\tau \in \Sigma(1:k-1,1:n).
$$
That is $\hat{\bs \epsilon}_{\tau}^A \perp \mathbb X$. 
As any two constraint sequences are disjointed, $\big \{ \hat{\bs \epsilon}_{\tau}^A, \tau \in \Sigma(1:k-1,1:n)\big \}$ is linear independent and~\eqref{eq:Xbot} follows from the dimensions match.
 
Let $\bs \epsilon_{\tau}^A= \sum\limits_{i\in \tau^c}\epsilon(i,\tau) \dd y_{i + \tau}\otimes *\dd y_{i}$, which also forms a basis of $\mathbb X^{\perp}$. Formula~\eqref{eq:projXperpExpress} holds by testing with $\bs \epsilon_{\tau}^A$
$$
\langle \omega, \bs \epsilon_{\tau}^A\rangle=\bs a_{\tau} \cdot \bs \epsilon_{\tau}, \quad \langle \hat{\bs \epsilon}_{\tau}^A, \bs \epsilon_{\tau}^A\rangle=n-k+1.
$$
Combining~\eqref{eq:projXperpExpress} and $P_{\mathbb X} = I - P_{\mathbb X^{\bot}}$ gives~\eqref{eq:projXExpress}.
\end{proof}
The constraint tensor spaces $\mathbb X$ and $\mathbb X^{\perp}$ are defined intrinsically using properties of differential forms, which is independent of choices of the basis. In the proof above, we use different bases $\{\dd y_i \}$ or $\{\dd \hat{y}_i \}$ for the ease of computing the projection.

\section{Geometric Decomposition of $H(\div)$-conforming Tensors with Constraints}\label{sec:geodecompdivtensor}
In this section, we generalize the geometric decomposition of the $H(\div)$-conforming vector finite element to the $H(\div)$-conforming tensor finite element. We decompose $\mathbb P_r(T; \mathbb X)$ into a direct sum of the tangential bubble subspace and a normal subspace. Then we present DoFs and show the $H(\div)$-conformity and the discrete inf-sup condition. 

\subsection{Decomposition of the constraint tensor space}\label{sec:decX}
We start from the tensor product of the Lagrange element with $\mathbb X$:
\begin{align*}
\mathbb P_r(T; \mathbb X)  &= \Oplus_{\ell = 0}^n\Oplus_{f\in \Delta_{\ell}(T)} \left [b_f \mathbb P_{r - (\ell +1)} (f) \otimes \mathbb X \right ].
\end{align*}
For an $\ell$-dimensional face $f\in \Delta_{\ell}(T)$, there is a matrix function $\boldsymbol{A}^f\in \mathbb R^{{n\choose k}\times n}$ satisfying the constraint $s^{k, n-1}(\boldsymbol{A}^f) = 0$. The vector $H(\div)$ element is $k=0$ for which the matrix $\boldsymbol{A}$ is degenerated to a vector of length $n$ and no constraint is imposed. For $1\leq k\leq n-1$, it is the constraint $s^{k, n-1}(\boldsymbol{A}^f) = 0$ that makes the finite element construction difficult as the constraint and the normal continuity should be satisfied simultaneously. 

As before, for a face $f\in \Delta_{\ell}(T)$, we choose a $t$-$n$ basis $\{\bs t_1^f, \ldots, \bs t_{\ell}^f, \bs n_{1}^f, \ldots, \bs n_{n-\ell}^f\}$, where the set of $\ell$ tangential vectors $\{\bs t_1^f, \ldots, \bs t_{\ell}^f\}$ is a basis of the tangent plane $\mathscr T^f$ of $f$ and the set of $n - \ell$ normal vectors $\{ \bs n_1^f, \ldots, \bs n_{n-\ell}^f\}$ forms a basis of the normal plane $\mathscr N^f$ of $f$. All basis vectors are normalized but may not be orthogonal. We write $i\in \mathscr T^f$ and $i\in \mathscr N^f$ to emphasize the range of the index. 

Inside the subspace $\mathscr T^f$, we can find a basis $\{\hat{\bs t}_1^f, \ldots, \hat{\bs t}_{\ell}^f\}$ dual to $\{\bs t_1^f, \ldots, \bs t_{\ell}^f\}$, i.e., $\hat{\bs t}\in \mathscr T^f$ and $(\hat{\bs t}_i, \bs t_j)=\delta_{i,j}$ for $i, j\in \mathscr T^f$. Similarly we have a basis $\{\hat{\bs n}_1^f, \ldots, \hat{\bs n}_{n-\ell}^f\}$ of $\mathscr N^f$ and  $(\hat{\bs n}_i, \bs n_j)=\delta_{i,j}$ for $i, j\in \mathscr N^f$. As $\mathscr T^f \perp \mathscr N^f$, the basis  $\{\hat{\bs t}_1^f, \ldots, \hat{\bs t}_{\ell}^f, \hat{\bs n}_{1}^f, \ldots, \hat{\bs n}_{n-\ell}^f\}$ is also dual to  $\{\bs t_1^f, \ldots, \bs t_{\ell}^f, \bs n_{1}^f, \ldots, \bs n_{n-\ell}^f\}$. Let $V = ( \bs t_1^f, \ldots, \bs t_{\ell}^f, \bs n_{1}^f, \ldots, \bs n_{n-\ell}^f)$ and $\hat{V} = ( \hat{\bs t}_1^f, \ldots, \hat{\bs t}_{\ell}^f, \hat{\bs n}_{1}^f, \ldots, \hat{\bs n}_{n-\ell}^f)$.

We say the basis $\{ \dd y_i^f\}$ is the basis of $\alt^{1}$ corresponding to a $t$-$n$ basis if  
\begin{equation*}
\prox_{1}(\dd y_i^f) =
\begin{cases}
\bs t_i^f & \textrm{ for } i \in \mathscr T^f,\\
\bs n_{i-\ell}^f & \textrm{ for } i\in \mathscr N^f.
\end{cases}
\end{equation*}
Then its dual  $\{ \dd\hat{y}_i^f\}$ has the vector proxy
$$
\prox_{1}(\dd\hat{y}_i^f)  = 
\begin{cases}
\hat{\bs t}_i^f & \textrm{ for } i\in \mathscr T^f,\\
\hat{\bs n}_{i-\ell}^f & \textrm{ for } i\in \mathscr N^f.
\end{cases}
$$

We extend the domain of $\prox$ and $\prox^{-1}$ to subspaces. For example, 
$\prox_{n-1}^{-1}\mathscr T^f = {\rm span}\{ \prox_{n-1}^{-1} (\hat{\bs t}_i^f), i=1,\ldots, \ell\}= {\rm span}\{ \prox_{n-1}^{-1} (\bs t_i^f), i=1,\ldots, \ell\}$. 
Then
\begin{align*}
\alt^{k,n-1} &= {\rm span} \{\dd y^f_{\sigma}\otimes *\dd y^f_{i}, \sigma \in \Sigma(1:k,1:n), i=1,\ldots, n\} \\
& = \big ( \alt^k\otimes \prox_{n-1}^{-1}\mathscr T^f\big ) \oplus \big ( \alt^k\otimes \prox_{n-1}^{-1}\mathscr N^f \big ).
\end{align*}


We introduce the concept {\em normal constraints}. A constraint with the constraint sequence $\{(\sigma_{i_m}, i_m)_{m=1,\ldots, n-k+1}\}$ is called a normal constraint if all $i_m\in \mathscr N^f$. The normal constraints will be imposed inside the normal component. Recall that we sort the constraint sequence s.t. $i_1< i_2 < \ldots < i_{n-k+1}$ and set $(\sigma_{i_1}, i_1)$ as the pair index. So for a non-normal constraint, the pair index $(\sigma_{i_1}, i_1)$, $i_1\in \mathscr T^f$ is in the tangential component for $\dim f\geq 1$. Also recall that non-pair indices are free indices. 

Define, for $f\in \Delta_{\ell}(T)$ with $0\leq \ell \leq n$,
\begin{align*}
\mathscr T^f(\mathbb X) :=& \, {\rm span} \{  \pi_{\mathbb X} ( \dd y^f_{\sigma}\otimes *\dd y^f_{i}), (\sigma, i) \text{ is free}, i\in \mathscr T^f\},\\
\mathscr N^f(\mathbb X) :=& \, {\rm span} \{  \pi_{\mathbb X} ( \dd y^f_{\sigma}\otimes *\dd y^f_{i}),  (\sigma, i) \text{ is free}, i\in \mathscr N^f \}.
\end{align*}
For $\ell = 0$, i.e., at vertex $\texttt{v}\in \Delta_0(T)$, we understand $ \mathscr T^\texttt{v}(\mathbb X) = \{ 0\}$ as no tangent plane and $\mathscr N^\texttt{v}(\mathbb X) = \mathbb X$ as $\mathscr N^f = \mathbb R^n$.

\begin{lemma}
Given a $t$-$n$ basis of a face $f\in \Delta_{\ell}(T)$, we have the following decomposition
\begin{equation*}
\mathbb X =  \mathscr T^f(\mathbb X) \oplus \mathscr N^f(\mathbb X).
\end{equation*}
Their dimensions are
\begin{align*}
\dim \mathscr T^f(\mathbb X) &= \ell {n \choose k} + {n-\ell\choose n-k+1} -  {n \choose k-1},\\
\dim \mathscr N^f(\mathbb X) &= (n - \ell) {n \choose k} - {n-\ell\choose n-k+1} .
\end{align*}
\end{lemma}
\begin{proof}
By construction, the sum is direct. It suffices to count the dimension. 
The number of constraints is $ \dim \alt^{k-1}$. By the proof of the surjectivity of $s^{k, n-1}$, all constraints are linearly independent.

Therefore
\begin{align*}
\dim \mathbb X  = \dim \alt^{n-1} \times \dim \alt^k - \dim \alt^{k-1} = n \dim \alt^k - \dim \alt^{k-1}.
\end{align*}
For each normal constraint, it will remove one index in $\mathscr N^f$. So
\begin{align*}
\dim \mathscr N^f(\mathbb X) =  (n - \ell) \times \dim \alt^k - \# \text{ normal constraints}.
\end{align*}
If a constraint is non-normal, then the pair index is in the tangential component. So 
$$
\dim \mathscr T^f(\mathbb X) =  \ell \times \dim \alt^k - \# \text{ non-normal  constraints}.
$$
Sum these two and use the fact $$\# \text{ normal constraints} + \# \text{ non-normal constraints} = \# \text{ all constraints} = \dim \alt^{k-1}$$ 
to conclude 
$\dim \mathbb X =  \dim \mathscr T^f(\mathbb X) + \dim \mathscr N^f(\mathbb X).$

The number of the normal constraints is $\displaystyle{n-\ell\choose n-k+1}$ (among $n-\ell$ indices of the normal plane, choose $n-k+1$ to form the constraint sequence $\{i_m, m=1,\ldots, n-k+1\}$) and thus the number of the non-normal constraints is $\displaystyle{n\choose k-1}-\displaystyle{n-\ell\choose n-k+1}$.
\end{proof}

\subsection{Geometric decomposition of polynomial constraint tensors}
Define the bubble polynomial space
$$
\mathbb B_r(\div,T; \mathbb X):= \mathbb P_r(T; \mathbb X)\cap \ker(\tr^{\div}).
$$ 
There is no bubble polynomial for lower degree $r = 0,1$. 
\begin{lemma}
 We have $\mathbb B_0(\div,T; \mathbb X)=\mathbb B_1(\div,T; \mathbb X)=0$.
\end{lemma}
\begin{proof}
Take $\omega\in\mathbb B_r(\div, T; \mathbb X)$ with $r=0,1$. Since $\{\bs n_{F_{0}}, \ldots, \bs n_{F_{j-1}}, \bs n_{F_{j+1}}, \ldots, \bs n_{F_{n}}\}$ form a basis of $\mathbb R^n$, and $(\tr_{F_i}\omega)(\texttt{v}_j)=0$ for $0\leq i\neq j\leq n$, we get $\omega(\texttt{v}_j)=0$. Thus, $\omega=0$.
\end{proof}

The tangential component contributes to the bubble. The normal component will contribute to the normal trace. Coupled with the bubble polynomials, we define, for $f\in \Delta_{\ell}(T)$ with $0\leq \ell \leq n$,
\begin{align*}
\mathbb B_r\mathscr T^f(\mathbb X) := b_f\mathbb P_{r-(\ell +1)}(f)\otimes \mathscr T^f(\mathbb X),\quad
\mathbb B_r\mathscr N^f(\mathbb X) := b_f\mathbb P_{r-(\ell +1)}(f)\otimes \mathscr N^f(\mathbb X).
\end{align*}

\begin{theorem}[Characterization of div bubble tensors]\label{thm:BrXdecomp}
For $r\geq 2$, it holds that
$$
\mathbb B_r(\div,T; \mathbb X)=\Oplus_{\ell = 1}^n\Oplus_{f\in \Delta_{\ell}(T)} \mathbb B_r\mathscr T^f(\mathbb X),
$$
and
$$
\tr^{\div}: \Oplus_{\ell = 0}^{n-1}\Oplus_{f\in \Delta_{\ell}(T)} \mathbb B_r\mathscr N^f(\mathbb X) \to \tr^{\div}\mathbb P_r(T; \mathbb X)
$$
is a bijection. Consequently 
$$
\dim \mathbb B_r(\div, T; \mathbb X)  = \sum_{\ell=1}^n{n+1 \choose \ell +1}  {r-1 \choose \ell} \left [  \ell {n \choose k} + {n-\ell\choose n-k+1} -  {n \choose k-1}\right ].
$$
\end{theorem}
\begin{proof}
Notice that the trace operator is applied to the second component in the tensor product $\dd y^f_{\sigma}\otimes *\dd y^f_{i}$ and $\tr_F^{\div}(*\dd y^f_{i}) = \det(V)\, \bs n_F\cdot \hat{\bs t}_i^f\dx_F = 0$ if $i\in \mathscr T^f$ and $f\in \Delta(F)$. The modification in $\pi_{\mathbb X}(\dd y^f_{\sigma}\otimes *\dd y^f_{i})$ will use the pair index in $\mathscr T^f$ and thus remains the normal trace free. 
For $f\not\in\Delta( F)$, the bubble function $b_f|_F = 0$. 

So we have verified $\Oplus_{\ell = 1}^n\Oplus_{f\in \Delta_{\ell}(T)} \mathbb B_r\mathscr T^f(\mathbb X) \subseteq \mathbb B_r(\div,T; \mathbb X)$. 

The rest is the same as Lemma~\ref{lem:divbubbletracespacedecomp}.
\end{proof}

We propose the following finite element for the constraint tensor $\mathbb X$. 
\begin{lemma}\label{cor:freeDof}
For each $f\in \Delta_{\ell}(T)$, choose a $t$-$n$ basis $\{\bs t_{1}^f, \ldots, \bs t_{\ell}^f, \bs n_{1}^f, \ldots, \bs n_{n-\ell}^f\}$ for $\ell = 1,\ldots, n-1$. Let $\{\dd y_i\}$ be the corresponding basis of $\alt^{1}$, and $\{\dd \hat{y}_i\}$ be its dual basis. 
The shape function space $ \mathbb P_r(T; \mathbb X)$ is uniquely determined by the DoFs
\begin{subequations}\label{eq:XdivDoF}
\begin{align}
\label{eq:freeXDof0}
\omega(\texttt{v}_i),  & \quad~i = 0,\ldots, n, \omega\in \mathbb X,\\
\label{eq:freeXDof}
(\omega,\eta)_f,
&\quad \eta\in \mathbb P_{r-(\ell +1)}(f)\otimes \{ \dd\hat{y}^f_{\sigma}\otimes *\dd\hat{y}^f_i \mid (\sigma,i) \text{ is free}, i\in \mathscr N^f \},\\
&\quad f\in \Delta_{\ell}(T), \ell = 1,\ldots, n-1, \notag\\
\label{eq:BXdivDof} 
(\omega, \eta)_T, &\quad \eta \in  \mathbb B_r(\div,T; \mathbb X).
\end{align}
\end{subequations}
\end{lemma}
\begin{proof}
Recall the duality 
$$
\langle \dd y^f_{\sigma}\otimes *\dd y^f_{i}, \dd\hat{y}^f_{\eta}\otimes *\dd\hat{y}^f_j \rangle = \delta_{\sigma,\eta}\delta_{i,j}
$$ for all $1\leq i, j\leq n,$ and $\sigma, \eta\in \Sigma(1:k,1:n)$. 
In $\pi_{\mathbb X}(\dd y^f_{\sigma}\otimes *\dd y^f_{i}) = \dd y^f_{\sigma}\otimes *\dd y^f_{i} - \epsilon(i, \tau)  \epsilon(i_1, \tau) \dd y^f_{\sigma_1}\otimes *\dd y^f_{i_1}$, the pair index $(\sigma_1, i_1)$ is not free and thus the duality still holds for all free indices
$$
\langle \pi_{\mathbb X}(\dd y^f_{\sigma}\otimes *\dd y^f_{i}), \dd \hat{y}^f_{\eta}\otimes *\dd\hat{y}^f_j \rangle = 
\delta_{\sigma,\eta}\delta_{i,j}, \quad (\sigma, i) \text{ and } (\eta, j) \text{ are free}. 
$$

Now assume~\eqref{eq:freeXDof0}-\eqref{eq:freeXDof} vanishes. 
For $\omega = \sum_{\text{ free} (\sigma, i)}c_{(\sigma, i)}b_f  \pi_{\mathbb X}(\dd y^f_{\sigma}\otimes *\dd y^f_{i})$, the DoF $(\omega, c_{(\sigma, i)} \dd \hat{y}^f_{\sigma}\otimes *\dd\hat{y}^f_{i})_f = 0$ will imply $c_{(\sigma, i)} = 0$ for all free indices $(\sigma,i)$ and $i\in \mathscr N^f$. Coupling with the property of the bubble function, we can prove by the forward substitution argument for $\ell = 0, 1, \ldots, n-1$ (see the proof of Lemma~\ref{lem:divbubbletracespacedecomp}), all normal components $\mathbb B_r\mathscr N^f(\mathbb X)$ for all $f\in \Delta_{\ell}(T)$ will vanish and thus only tangential components are left, i.e., $\omega \in \Oplus_{\ell = 1}^n\Oplus_{f\in \Delta_{\ell}(T)} \mathbb B_r\mathscr T^f(\mathbb X)$. 
By Theorem~\ref{thm:BrXdecomp}, $\omega\in  \mathbb B_r(\div,T; \mathbb X)$ and vanishing~\eqref{eq:BXdivDof} will imply $\omega = 0$.
\end{proof}
DoF~\eqref{eq:freeXDof} is in the spirit of the Petrov-Galerkin method, where the test function $\dd\hat{y}^f_{\sigma}\otimes *\dd\hat{y}^f_i$ is different from the trial function $\pi_{\mathbb X}(\dd y^f_{\sigma}\otimes *\dd y^f_i)$. This change is important as $\prox(\dd\hat{y}^f_{\sigma})\otimes \prox(*\dd\hat{y}^f_i) = c\prox(\dd\hat{y}^f_{\sigma}) \otimes \bs n^f_i$ for $i\in \mathscr N^f$ will contain the normal component only, which will determine the normal trace to ensure the $H(\div)$-conformity. 

In view of the vector proxy, usually we can choose an orthonormal basis for $\mathscr T^f$ so that $\bs t_i^f = \hat{\bs t}_i^f$ for $i\in \mathscr T^f$. We use the normal vector $\{\bs n_i^f\}$ to define DoFs while use its dual basis $\{\hat{\bs n}_i^f\}$ to expand the shape function. 

Similar to \cite{ChenChenHuangWei2023}, we can write out an explicit basis function
$$
\phi_{\alpha} \pi_{\mathbb X}(\dd y^f_{\sigma}\otimes *\dd y^f_{i}), \quad \alpha \in \mathbb T_r^{\ell}(\mathring{f}), (\sigma, i) \text{ is free}, f\in \Delta_{\ell}(T), \ell = 0,\ldots, n,
$$
where $\phi_{\alpha}$ is the nodal basis of Lagrange element at lattice point $\alpha$, and $\mathbb T^{\ell}_r(\mathring{f})$ is the set of lattice points whose geometric embedding is in the interior of $f$.

%
%
%

\subsection{$H(\div)$-conforming finite element spaces}
We shall glue local finite element spaces to form an $H(\div)$-conforming subspace of $H(\div,\Omega;\mathbb X)$ by choosing a global $t$-$n$ basis $\{\bs t_{1}^f, \ldots, \bs t_{\ell}^f, \bs n_{1}^f, \ldots, \bs n_{n-\ell}^f\}$, i.e., depending only on $f$ not the element containing $f$. 

\begin{theorem}[$H(\div)$-conforming finite element with global $t$-$n$ bases]\label{thm:divXunisolvence1}
For each $f\in \Delta_{\ell}(\mathcal T_h)$, $\ell = 0,\ldots, n-1$, choose a $t$-$n$ basis $\{\bs t_{1}^f, \ldots, \bs t_{\ell}^f, \bs n_{1}^f, \ldots, \bs n_{n-\ell}^f\}$ depending only on $f$. Let $\{\dd y_i\}$ be the corresponding basis of $\alt^{1}$, and $\{\dd \hat{y}_i\}$ be its dual basis. Then the following DoFs
\begin{subequations}\label{eq:divX}
 \begin{align}
\label{eq:bdXDof0}
\omega(\texttt{v}_i),  & \quad~i = 0,\ldots, n, \omega\in \mathbb X,\\
\label{eq:bdXDof1}
(\omega,\eta)_f,
&\quad \eta\in \mathbb P_{r-(\ell +1)}(f)\otimes \{ \dd\hat{y}^f_{\sigma}\otimes *\dd\hat{y}^f_i \mid(\sigma,i) \text{ is free, } i\in \mathscr N^f\},\\
&\quad f\in \Delta_{\ell}(\mathcal T_h), \ell = 1,\ldots, n-1,\notag \\
\label{eq:bubbleXDof} 
(\omega, \eta)_T, &\quad \eta \in  \mathbb B_r(\div,T; \mathbb X), T\in \mathcal T_h,  
\end{align}
\end{subequations}
will determine a space $V_h \subset H(\div,\Omega; \mathbb X)$. 
\end{theorem}
\begin{proof}
By Lemma~\ref{cor:freeDof}, on each simplex, DoFs~\eqref{eq:divX} will define a function $\omega\in \mathbb P_r(T;\mathbb X)$. We only need to verify the trace is uniquely determined by \eqref{eq:bdXDof0}-\eqref{eq:bdXDof1}. 

For a face $F\in \Delta_{n-1}(\mathcal T_h)$, we have a normal vector $\bs n_F$ depending only on $F$ and the formula 
\begin{equation*}
\langle \tr_F^{\div}\omega,  \dd\hat{y}_{\sigma}^f \rangle = \langle \omega, \dd \hat{y}_{\sigma}^f \otimes \star\dd \bs n_F \rangle. 
\end{equation*}
For $f\in \Delta(F)$, as $\bs n_F\in \mathscr N^f$, we can expand $\bs n_F = \sum_{i=1}^{n-\ell} c_i^f \bs n_i^f$ and $\star\dd \bs n_F = \sum_{i=1}^{n-\ell} c_i^f \star \dd \bs n_i^f =  \sum_{i\in \mathscr N^f} \tilde c_{i}^f * \dd\hat{y}_{i}^f$. 

When $(\sigma, i)$ is free and $i\in \mathscr N^f$, $(\omega, \dd\hat{y}^f_{\sigma}\otimes *\dd\hat{y}^f_i)_f$ is given by the DoFs \eqref{eq:bdXDof0}-\eqref{eq:bdXDof1}. 

Then consider the case $(\sigma, i)$ is not free and $i\in \mathscr N^f$. Namely $(\sigma, i) = (i_1 + \tau, i_1)$ is a pair index for a constraint sequence. As $i_1 = i\in \mathscr N^f$, this constraint is a normal constraint, i.e., all $i_m\in \mathscr N^f$ for $m=1, \ldots, n-k+1$. 
We can express $\epsilon(i_1,\tau)\dd\hat{y}^f_{\sigma}\otimes *\dd\hat{y}^f_i = \hat{\bs \epsilon}_{\tau}^A - \sum_{m=2}^{n-k+1}\epsilon(i_m, \tau) \dd\hat{y}^f_{i_m + \tau}\otimes *\dd\hat{y}^f_{i_m}$, where $\hat{\bs \epsilon}_{\tau}^A\in \mathbb X^{\perp}$ is defined in \eqref{eq:epsilonA}. As $\omega \in \mathbb X$ and $\hat{\bs \epsilon}_{\tau}^A\in \mathbb X^{\perp}$, $\langle \omega, \hat{\bs \epsilon}_{\tau}^A\rangle = 0$ and consequently
$$
(\omega, \epsilon(i_1,\tau) \dd\hat{y}^f_{\sigma}\otimes *\dd\hat{y}^f_i)_f = - \sum_{m=2}^{n-k+1}\epsilon(i_m, \tau) (\omega, \dd\hat{y}^f_{i_m + \tau}\otimes *\dd\hat{y}^f_{i_m})_f.
$$
Notice that the index $(i_m+\tau, i_m)$ is free and $i_m\in \mathscr N^f$ for $m\geq 2$. So $(\omega, \dd\hat{y}^f_{\sigma}\otimes *\dd\hat{y}^f_i)_f$ can be also determined by DoFs \eqref{eq:bdXDof0}-\eqref{eq:bdXDof1} even $(\sigma, i)$ is not free.  

It follows that
$$
(\tr_F^{\div}\omega,\eta)_f,
\quad \quad~f\in \Delta_{\ell}(F),\eta\in \mathbb P_{r-(\ell +1)}(f)\otimes\alt^k,  \ell = 0,\ldots, n-1.
$$
can be determined by DoFs \eqref{eq:bdXDof0}-\eqref{eq:bdXDof1}.  As $\tr_F^{\div}\omega\in \mathbb P_r(F)\otimes \alt^k$, by the uni-solvence of the vector Lagrange element, we conclude $\tr_F^{\div}\omega$ is uniquely determined by DoFs \eqref{eq:bdXDof0}-\eqref{eq:bdXDof1}. 
\end{proof}

DoF~\eqref{eq:bdXDof0} implies the continuity at vertices. %
We argue that the continuity at vertices is also necessary. 
Take a vertex in $\Delta_{0}(T)$, for example $\texttt{v}_0$. Then $(\bs A \bs n_{F_{i}})(\texttt{v}_0)$ is determined by the row vector $(\bs A \bs n_{F_{i}})|_{F_{i}}\in \Lambda^k$ for $i=1,\ldots, n$, where $F_{i}\in\Delta_{n-1}(T)$ is the face opposite to $\texttt{v}_i$. If it is continuous on each face not on vertices, the number of elements in $(\bs A\bs n_{F})(\texttt{v}_0)$ is $\dim \alt^k$ for each face. Running $i$ from $1$ to $n$, $\bs A(\texttt{v}_0)$ is determined by $n\dim \alt^k$ conditions, which is more than $\dim \mathbb X$. In other words, the constraint makes the tensor product of vector DoFs fails and introduce additional smoothness. 

\subsection{Facewise redistribution}
DoFs \eqref{eq:bdXDof0}-\eqref{eq:bdXDof1} implies stronger continuity on the normal plane. We shall further redistribute some DoFs to faces $F\in \Delta_{n-1}(T)$. 

To do so, we introduce the concept of the {\em free block} and the {\em normal constraint block}. For a fixed $\sigma$, the row vector $(\sigma,i), i=\ell + 1:n$ is called a {\em free row} if no index is in a normal constraint. All free rows will form a sub-matrix called the {\em free block}. The rest is called the {\em normal constraint block}, which contains all normal constraints. Do not confuse the free row with the free index. All indices of a free row are free. But a row with all free indices may not be a free row. A free index can be associated to a normal constraint sequence. See the second and third rows in Fig.~\ref{fig:constraintblock}. 

\begin{figure}[htbp]
\begin{center}
\includegraphics[width=8.75cm]{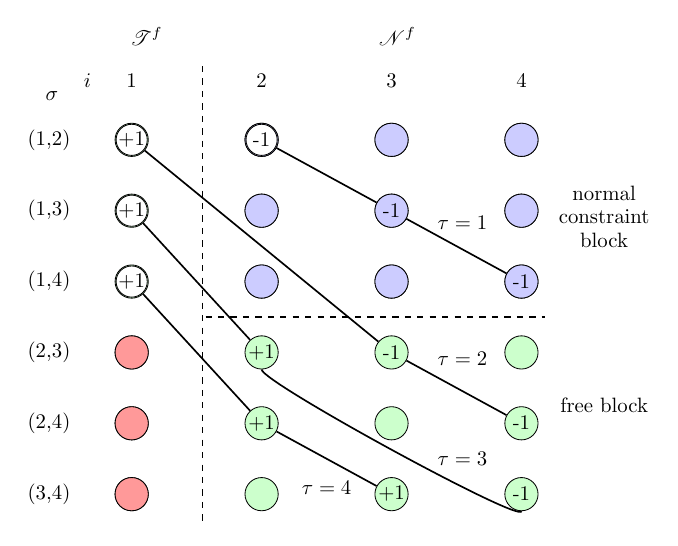}
\caption{The normal constraint block (in blue) and the free block (in green) for an $f\in \Delta_1(T)$ with $n=4, k=2$. In the free block, no index is in a normal constraint. In the normal constraint block, each row contains at least one index in a normal constraint. The red circles will contribute to the bubble spaces. The white circle denotes the pair index of each constraint sequence which is not a free index.}
\label{fig:constraintblock}
\end{center}
\end{figure}

For indices in the free block, the pair index is in the tangential component, which will not change the normal trace. For a normal constraint, the pair index is still in the normal constraint block and changing a basis of $\mathscr N^f$ may destroy the $H(\div)$-conformity. Therefore a global basis $\{\bs n_i^f\}$ of $\mathscr N^f$ is chosen to impose the normal constraints for the normal constraint block. For example, DoFs~\eqref{eq:divfemSkdof2} for $\bs n_i^{\intercal}\bs\sigma \bs n_j$ in the $H(\div,\mathbb S)$ element are normal constraints.

For a free row, we have a vector in $\mathbb R^n$ consists of $[a_{\sigma,1}, \ldots, a_{\sigma,\ell}, a_{\sigma,\ell + 1}, \ldots, a_{\sigma,n}]^{\intercal}$. The first $\ell$ components are in the tangential component and their values are determined locally as the element-wise div bubble polynomials. The part $[a_{\sigma,\ell + 1}, \ldots, a_{\sigma,n}]^{\intercal}$ is in the normal component and the corresponding DoFs can be redistributed facewisely by using the face normal basis $\{\bs n_F, f\subseteq F, F\in \Delta_{n-1}(T)\}$ of $\mathscr N^f$. In short, a free row is just like a vector $H(\div)$ element. 

\begin{theorem}[$H(\div)$-conforming finite element with face redistribution]\label{thm:PrXmorelocal}
For each $f\in \Delta_{\ell}(\mathcal T_h)$, $\ell = 0,\ldots, n-1$, choose a global  $t$-$n$ basis $\{\bs t_{1}^f, \ldots, \bs t_{\ell}^f, \bs n_{1}^f, \ldots, \bs n_{n-\ell}^f\}$ depending only on $f$. Let $\{\dd y_i\}$ be the corresponding basis of $\alt^{1}$, and $\{\dd \hat{y}_i\}$ be its dual basis. The following DoFs
\begin{subequations}\label{eq:divXredis}
\begin{align}
\label{eq:bdXXXDof0}
\omega(\texttt{v}_i),  & \quad~i = 0,\ldots, n, \omega\in \mathbb X,\\
\label{eq:bdXXXDof1}
(\omega,\eta)_f,
&\quad \eta\in \mathbb P_{r-(\ell +1)}(f)\otimes\{ \dd\hat{y}^f_{\sigma}\otimes *\dd\hat{y}^f_i \mid (\sigma, i) \text{ is free and in the normal constraint block} \}, \\
&\quad   f\in \Delta_{\ell}(\mathcal T_h), \ell = 1,\ldots, n-1, \notag\\ 
\label{eq:bdXXXDof2}
(\tr_F^{\div}\omega,\eta)_f,
&\quad \eta\in \mathbb P_{r-(\ell +1)}(f)\otimes \{ \dd\hat{y}^f_{\sigma} \mid \sigma \text{ is in the free block} \},\\
&\quad   F\in \Delta_{n-1}(\mathcal T_h), f\in \Delta_{\ell}(F),\ell = 1,\ldots, n-1, \notag\\
 \label{eq:bdXXXDof3}
(\omega, \eta)_T, &\quad \eta \in  \mathbb B_r(\div,T; \mathbb X), T\in \mathcal T_h,
\end{align}
\end{subequations}
will determine a space $V_h \subset H(\div,\Omega; \mathbb X)$.
\end{theorem}
\begin{proof}
\step{1} Local unisolvence on an element. 
We write DoF~\eqref{eq:bdXXXDof2} as
\begin{equation}\label{eq:tromegadxsigma}
(\tr_F^{\div}\omega,\dd\hat{y}^f_{\sigma})_f=(\omega, \dd\hat{y}^f_{\sigma}\otimes\star\dd \bs n_F^{T})_f, \quad f\subseteq F,
\end{equation}
where $\{\bs n_F^T, f\subseteq F, F\in \Delta_{n-1}(T)\}$ is the face normal basis of $\mathscr N^f$ in element $T$. As $\{\bs n_{F_i}^{T}, i\in f^*\}$ and $\{\bs n^f_j, j=1,\ldots, n-\ell\}$ are different bases of the same space $\mathscr N^f$, DoF~\eqref{eq:tromegadxsigma} will  also determine 
\begin{equation}\label{eq:dyif}
(\omega, \dd\hat{y}^f_{\sigma}\otimes*\dd \hat{y}_i^f)_f, \quad i = \ell +1, \ldots, n.
\end{equation}
Together with~\eqref{eq:bdXXXDof1}, we obtain DoFs~\eqref{eq:bdXDof1}. The number of DoFs remains the same as in a free row all indices are free. 
Then we conclude the unisolvence from Theorem~\ref{thm:divXunisolvence1}. 

\step{2} Global conformity across elements. The continuity 
$$
(\tr_F^{\div}\omega,\eta)_f, \quad \eta\in \mathbb P_{r-(\ell +1)}(f)\otimes \{ \dd \hat{y}^f_{\sigma} \mid \sigma \text{ is in the normal constraint block} \}
$$
is implied by DoFs \eqref{eq:bdXXXDof0}-\eqref{eq:bdXXXDof1} as the global  $t$-$n$ basis $\{\bs t_{1}^f, \ldots, \bs t_{\ell}^f, \bs n_{1}^f, \ldots, \bs n_{n-\ell}^f\}$ depending only on $f$. Together with DoF \eqref{eq:bdXXXDof2}, we conclude $\tr_F^{\div}\omega$ is continuous. 
\end{proof}

\begin{remark}\rm 
It seems that we can also try to redistribute the rows in the normal constraint block as DoF \eqref{eq:dyif} can be derived from DoF \eqref{eq:tromegadxsigma}. The problem comes from the fact the basis $\{\bs n_F^T, f\subseteq F, F\in \Delta_{n-1}(T)\}$ is element-dependent. Expand a global basis vector $\bs n_i^f = \sum_{F}c_F^T \bs n_F^T$ will let \eqref{eq:dyif} be element dependent. For example, for $H(\div,\mathbb S)$ element, $\bs n_i^{\intercal}\bs\tau\bs n_j$ cannot be redistributed to faces. In the above proof, we transfer \eqref{eq:bdXXXDof2} to  \eqref{eq:dyif} only for the ease of uni-solvence.
\end{remark}


Following the management of DoFs presented in \cite{ChenChenHuangWei2023}, we need to set global and local indexing rules for all DoFs. The global numbering rule is similar to the Lagrange interpolation points.
Globally, we can divide the DoFs into those that are shared among simplices and those that are not. The DoFs shared among simplices can be further allocated to the respective sub-simplices. For the DoFs situated on a sub-simplex \( f \), we can choose global normal vectors \( \bs n_{i}^f \) which share a global DoF labeling, and local tangential vectors \( \bs t_i^f \) which have different labeling in different elements.

We count the size of the normal constraint block. The normal constraint block disappears when $\ell\geq k$ as the length of the constraint $n-k+1$ will be greater than $n - \ell$ the dimension of the normal plane. That is when $\ell \geq k$, all rows are free and corresponding DoFs can be redistributed to faces. 

Consider the case $\ell< k$. 
If $\sigma\in\Sigma(1:k,1:n)$ is in the constraint block, there exists some $i>\ell$ such that $\tau = \sigma-i\in\Sigma(1:k-1,1:n)$ satisfies $\tau^c\subseteq[\ell+1:n]$, which is equivalent to $\sigma^c\subseteq[\ell+1:n]$.  Hence
the number of rows in the constraint block is $\displaystyle{n-\ell\choose n-k}$: among all $n-\ell$ indices of the normal plane, choose $n-k$ indices to form $\sigma^c$. When $\ell = 0$, $\displaystyle{n\choose n-k} = {n \choose k}$, i.e., all rows belong to the constraint block. Consequently DoFs at vertices cannot be redistributed facewisely. We thus give another justification of the continuity at vertices.

\subsection{Discrete inf-sup condition}

%
For a smooth tensor $u=(u_{\sigma})$ with index $\sigma \in \Sigma(1:k,1:n)$, let $\grad u$ be a tensor with size ${{n \choose k}\times n }$ give by 
$$
(\grad u)_{\sigma,i}:=\partial_{x_i}u_{\sigma}.
$$
\begin{lemma}
It holds that
$$
\ker(P_{\mathbb X}\grad)=\mathbb P_0(T;\mathbb R^{{n \choose k} })+ \mathbb X^{\bot}x.
$$
\end{lemma}
\begin{proof}
Noting that $\grad(\mathbb P_0(T;\mathbb R^{{n \choose k} })+ \mathbb X^{\bot}x)=\mathbb X^{\bot}$, hence 
$$
\mathbb P_0(T;\mathbb R^{{n \choose k} })+ \mathbb X^{\bot}x\subseteq \ker(P_{\mathbb X}\grad).
$$
By (31) in~\cite{arnoldComplexesComplexes2021}, $\ker(P_{\mathbb X}\grad)\subseteq\mathbb P_1(T;\mathbb R^{{n \choose k} })$. Take $c+Ax\in\ker(P_{\mathbb X}\grad)$. By $\grad(c+Ax)=A$, we have $P_{\mathbb X} A=0$, i.e. $A\in\mathbb X^{\bot}$. Therefore
$\ker(P_{\mathbb X}\grad)\subseteq\mathbb P_0(T;\mathbb R^{{n \choose k} })+ \mathbb X^{\bot}x.$
\end{proof}

We introduce notation ${\rm RX}:= \ker (P_{\mathbb X}\grad)$. Examples are ${\rm RX} = {\rm RM}$ the rigid motion for $\mathbb X = \mathbb S$, and ${\rm RX} = {\rm RT}$ for $\mathbb X = \mathbb T$, where ${\rm RT}:=\mathbb P_0(T;\mathbb R^n)+x\mathbb P_0(T)$ is the lowest Raviart-Thomas element. In general, ${\rm RX}$ is the Whitney form $\mathbb P_0\Lambda^k + \kappa_{k+1} \mathbb P_0\Lambda^{k+1}$, which is another characterization of $\ker (P_{\mathbb X}\grad)$.

Operator $P_{\mathbb X}\grad$ is the proxy of $P_{\mathbb X} \dd :=(-1)^{n-1}\star P_{\mathbb X}\star\dd: \Lambda^{k,0}\to \Lambda^{k,1}$. Indeed $\int_T \omega \wedge P_{\mathbb X} \dd \eta=(-1)^{n-1}\int_T \omega \wedge \star\star\dd \eta=\int_T \omega \wedge\dd \eta$, then the integration by parts holds
\begin{equation}\label{eq:ibp}
\int_T \dd \omega \wedge \eta = (-1)^{n}\int_T \omega \wedge P_{\mathbb X} \dd \eta + \int_{\partial T} \tr^{\div}\omega \wedge \eta
\end{equation}
for any $\omega \in H(\div, T;\mathbb X)$ and $\eta\in \Lambda^{k,0}$.
In the matrix and vector proxy, we have
$$
\int_T (\div \bs A) \cdot \bs u \dx = -\int_T \bs A: P_{\mathbb X}\grad \bs u\dx + \int_{\partial T} (\bs A\bs n)\cdot \bs u \dd S.
$$

We consider the finite elements defined in Theorem~\ref{thm:PrXmorelocal}. 
Define the global finite element space
\begin{align}
\notag
V_h:=\{&\omega_h\in L^2(\Omega;\mathbb X):\omega_h|_T\in \mathbb P_r(T; \mathbb X)\quad\forall~T\in\mathcal T_h,  \\
\notag
&\textrm{ the DoFs~\eqref{eq:bdXXXDof0}-\eqref{eq:bdXXXDof1} is single-valued across } f\in\Delta_{\ell}(\mathcal T_h) \textrm{ for } \ell=0,\ldots, n-1,\\
\notag
&\quad\quad\qquad\qquad\quad\quad\quad\quad\;\textrm{ the DoF~\eqref{eq:bdXXXDof2} is single-valued across } F\in\Delta_{n-1}(\mathcal T_h) \},    \\
\label{eq:QhLambdak}
Q_h:=&\{q_h\in L^2(\Omega;\Lambda^{k}):q_h|_T\in \mathbb P_{r-1}(T; \Lambda^{k})\quad\forall~T\in\mathcal T_h\}.
\end{align}
Thanks to Theorem~\ref{thm:PrXmorelocal}, $V_h\subset H(\div,\Omega;\mathbb X)$.
We are going to verify the discrete inf-sup condition $\div V_h=Q_h$ if $r\geq n+1$. The following characterization of the range of the div operator on the bubble polynomial space is an abstract version of results \eqref{eq:TdivBrsurjection} and \eqref{eq:SdivBrsurjection} established in~\cite{Hu2015,HuZhang2015,HuLiang2021}.

\begin{lemma}\label{lm:divbubbleX}
For each $T\in\mathcal T_h$,
it holds
\begin{equation}\label{eq:divBrsurjection}
\div \mathbb B_r(\div, T; \mathbb X) = \mathbb P_{r-1}(T;\Lambda^{k})\cap{\rm RX}^{\perp}.
\end{equation}
\end{lemma}
\begin{proof}
When $r = 0, 1$, \eqref{eq:divBrsurjection} is obviously true as both sides are zero. We thus consider $r\geq 2$. 
 
Apply the integration by parts~\eqref{eq:ibp}
to get $$\div \mathbb B_r(\div, T; \mathbb X)\subseteq (\mathbb P_{r-1}(T;\Lambda^{k})\cap{\rm RX}^{\perp}).$$
Next we focus on the proof of the other side 
$$(\mathbb P_{r-1}(T;\Lambda^{k})\cap{\rm RX}^{\perp})\subseteq\div \mathbb B_r(\div, T; \mathbb X).$$
For simplicity, write $\dd\lambda_{\sigma^{*}}\otimes\dd\lambda_{[\sigma(0), \sigma(i)]^*}$ as $\phi_{\sigma,i}$ for each $\sigma\in\Sigma(0:n-k, 0:n)$ and $i=1,\ldots, n-k$.
By Theorem~\ref{thm:basisofX}, $\{\phi_{\sigma,i}\}_{i=1,\ldots, n-k}^{\sigma\in\Sigma(0:n-k, 0:n)}$
is a basis of $\mathbb X$, whose dual basis (appropriate rescaling of~\eqref{eq:dualbasis}) is denoted by $\{\psi_{\tau,j}\}_{j=1,\ldots, n-k}^{\tau\in\Sigma(0:n-k, 0:n)}$, that is $\psi_{\tau,j}\in\mathbb X$, $\langle \phi_{\sigma,i}, \psi_{\tau,j} \rangle = 1$ for $\sigma=\tau$ and $i=j$, otherwise it vanishes. 

Consider the edge $e = \texttt{e}_{\sigma(0), \sigma(i)}$. 
The vector proxy of $\dd\lambda_{[\sigma(0), \sigma(i)]^*}$ is proportional to $\bs t^{e}$. Coupled with the edge bubble function $b_e = \lambda_{\sigma(0)}\lambda_{\sigma(i)}$, the vector function $b_e\bs t^{e}$ satisfies 
$$
\bs n_F\cdot b_e\bs t^{e}|_F = 0, \quad \quad~F\in \Delta_{n-1}(T),
$$
as if the edge $e\not\subseteq F$, then $b_e|_F = 0$; otherwise $\bs n_F\cdot \bs t^{e} = 0$. Therefore $\lambda_{\sigma(0)}\lambda_{\sigma(i)}\phi_{\sigma,i}\in \mathbb B_2(\div,T;\mathbb X)$. 

If $(\mathbb P_{r-1}(T;\Lambda^{k})\cap{\rm RX}^{\perp})\not\subseteq\div \mathbb B_r(\div, T; \mathbb X)$, then there exists $u\in \mathbb P_{r-1}(T;\Lambda^{k})\cap{\rm RX}^{\perp}$ satisfying $(u, \div\omega)_T=0$ for any $\omega\in\mathbb B_r(\div, T; \mathbb X)$. Equivalently 
$$
(P_{\mathbb X}\grad u, \omega)_T=0\quad\forall~\omega\in\mathbb B_r(\div, T; \mathbb X).
$$
By expressing $P_{\mathbb X}\grad u=\sum\limits_{\sigma\in\Sigma(0:n-k, 0:n)}\sum\limits_{i=1}^{n-k}q^{\sigma,i}\psi_{\sigma,i}$ with $q^{\sigma,i}\in \mathbb P_{r-2}(T)$, we choose 
$$
\omega=\sum\limits_{\sigma\in\Sigma(0:n-k, 0:n)}\sum\limits_{i=1}^{n-k}\lambda_{\sigma(0)}\lambda_{\sigma(i)}q^{\sigma,i}\phi_{\sigma,i}\in\mathbb B_r(\div, T; \mathbb X).
$$
Then we have
$$
\sum_{\sigma\in\Sigma(0:n-k, 0:n)}\sum_{i=1}^{n-k}(\lambda_{\sigma(0)}\lambda_{\sigma(i)}q^{\sigma,i}, q^{\sigma,i})_T=0.
$$
Therefore $q^{\sigma,i}=0$ for all $i$ and $\sigma$ and consequently $u=0$.
\end{proof}

Employing the same argument as the proof of~\eqref{eq:discretedivinfsup}, the discrete inf-sup condition follows from~\eqref{eq:divBrsurjection}.
\begin{lemma}\label{lem:divXdiscreteinfsup}
Let $r\geq n+1$. Let $V_h \subset H(\div,\Omega; \mathbb X)$ be the finite element space defined in Theorem~\ref{thm:divXunisolvence1} and $Q_h$ be the space defined in~\eqref{eq:QhLambdak}. It holds 
$$
\div V_h=Q_h.
$$
\end{lemma}
\begin{proof}
Take $q_h\in Q_h$. By (37) in~\cite{arnoldComplexesComplexes2021}, $\div H^1(\Omega;\mathbb X)=L^2(\Omega;\Lambda^{k})$. Then there exists $\omega\in H^1(\Omega;\mathbb X)$ satisfying $\div\omega=q_h$.
Let $\omega_1$ be the Scott-Zhang interpolation~\cite{ScottZhang1990} of $\omega$ in the tensor $r$th order Lagrange element space $S_h^r\otimes\mathbb X$ such that
$$
(\tr_F^{\div}(\omega-\omega_1),\eta)_F=0,
\quad \quad~\eta\in \mathbb P_{r-n}(F)\otimes\alt^k, F\in \Delta_{n-1}(\mathcal T_h).
$$
Since $r\geq n+1$, ${\rm RX}\subset\mathbb P_1(T; \alt^{k})\subseteq \mathbb P_{r-n}(T;\alt^{k})$, the last equation together with the integration by parts implies
$\div(\omega-\omega_1)|_T\in\mathbb P_{r-1}(T;\alt^{k})\cap{\rm RX}^{\perp}$. 
By~\eqref{eq:divBrsurjection}, there exists $\omega_2\in V_h$ such that $\omega_2|_T\in \mathbb B_r(\div, T; \mathbb X)$ for each $T\in\mathcal T_h$ and $\div\omega_2=\div(\omega-\omega_1)$.
Finally take $\omega_h=\omega_1+\omega_2\in V_h$ to get $\div\omega_h=q_h$.
\end{proof}

For $\ell \geq k$, all DoFs can be redistributed facewisely as no normal constraint block exists. 
We can further modify the DoFs to get the discrete inf-sup condition with degree $r\geq k+1$ relaxing the requirement $r\geq n+1$ for $k=1,\ldots, n-2$.
\begin{theorem}[$H(\div)$-conforming finite element with a better inf-sup condition]
\label{th:CHspace2}
Let $1\leq k\leq n-2$
and $r\geq k+1$.
For each $f\in \Delta_{\ell}(\mathcal T_h)$, $\ell = 0,\ldots, n-1$, choose a global  $t$-$n$ basis $\{\bs t_{1}^f, \ldots, \bs t_{\ell}^f, \bs n_{1}^f, \ldots, \bs n_{n-\ell}^f\}$ depending only on $f$. Let $\{\dd y_i\}$ be the corresponding basis of $\alt^{1}$, and $\{\dd \hat{y}_i\}$ be its dual basis. The {\rm DoFs}
\begin{subequations}\label{eq:newX}
\begin{align}
\label{eq:newbdXDof0}
\omega(\texttt{v}_i),  & \quad~i = 0,\ldots, n, \omega\in \mathbb X,\\
\label{eq:newbdXDof1}
(\omega,\eta)_f,
&\quad \eta\in \mathbb P_{r-(\ell +1)}(f)\otimes\{ \dd\hat{y}^f_{\sigma}\otimes *\dd\hat{y}^f_i \mid (\sigma,i) \text{ is free and in the constraint block} \}, \\
&\quad  f\in \Delta_{\ell}(\mathcal T_h), \ell = 1,\ldots, k-1,  \notag\\
\label{eq:newbdXDof2}
(\tr_F^{\div}\omega,\eta)_f,
&\quad \eta\in \mathbb P_{r-(\ell +1)}(f)\otimes \{ \dd\hat{y}^f_{\sigma} \mid \sigma \text{ is in the free block} \},\\
&\quad F\in \Delta_{n-1}(\mathcal T_h), f\in \Delta_{\ell}(F),\ell = 1,\ldots, k-1, \notag\\
\label{eq:newbdXDof3}
(\tr_F^{\div}\omega,\eta)_F,
&\quad \eta\in [\mathbb P_{1}(F)\oplus (\mathbb B_{r,k}(F)\cap \mathbb P_1^{\perp}(F))]\otimes \alt^k, F\in \Delta_{n-1}(\mathcal T_h), \\
\notag
(\omega, \eta)_T, &\quad \eta \in  \mathbb B_r(\div,T; \mathbb X), T\in\mathcal T_h, 
\end{align}
\end{subequations}
will determine a space $V_h \subset H(\div,\Omega; \mathbb X)$.
\end{theorem}
\begin{proof}
The condition $k\leq n-2$ is to ensure 
$$
\dim \mathbb B_{k+1,k}(F) = |\Delta_k(F)| = {n\choose k+1}\geq n = \dim\mathbb P_{1}(F)\quad\; \textrm{ if } k\leq n-2.
$$
So that we can modify the face DoF to \eqref{eq:newbdXDof3}. 
Vanishing DoFs \eqref{eq:newbdXDof0}-\eqref{eq:newbdXDof2} will imply $\tr_F^{\div}\omega\in \mathbb B_{r,k}(F)\otimes \alt^k$, which can be decomposed into \eqref{eq:newbdXDof3}.
\end{proof}

As $\eta \in \mathbb P_1(F)\otimes \alt^k$ is included in DoF~\eqref{eq:newbdXDof3}, 
we acquire the following discrete inf-sup condition by applying the same argument as the proof of Lemma~\ref{lem:divXdiscreteinfsup}.
\begin{corollary}
Let $1\leq k\leq n-2$
and $r\geq k+1$. Let $V_h \subset H(\div,\Omega; \mathbb X)$ be the finite element space defined in Theorem~\ref{th:CHspace2} and $Q_h$ be the space defined in~\eqref{eq:QhLambdak}. It holds 
$$
\div V_h=Q_h.
$$
\end{corollary}

\bibliographystyle{abbrv}
\bibliography{FEEC}
\end{document}